\newcommand{\Z}{\mathbb{Z}}
\newcommand{\Q}{\mathbb{Q}}
\newcommand{\R}{\mathbb{R}}
\newcommand{\C}{\mathbb{C}}
\newcommand{\F}{\mathbb{F}}
\newcommand{\OK}{\mathcal{O}_{K}}
\newcommand{\ag}{\mathfrak{a}}
\newcommand{\bg}{\mathfrak{b}}
\newcommand{\dg}{\mathfrak{d}}
\newcommand{\g}{\mathfrak{g}}
\newcommand{\OT}{\tilde O}
\newcommand{\bs}{\mathsf S}
\let\qedhere\relax
\newcommand{\Bid}{B_\mathrm{id}}
\newcommand{\Be}{B_\mathrm{e}}
\providecommand{\inorm}[1]{\operatorname{N}(#1)}
\providecommand{\HNF}{\operatorname{HNF}}
\providecommand{\HOW}{\operatorname{How}}
\newtheorem{theorem}{Theorem}
\newtheorem*{theorem-non}{Theorem}
\newtheorem{lemma}[theorem]{Lemma}
\newtheorem{proposition}[theorem]{Proposition}
\newtheorem{corollary}[theorem]{Corollary}
\newtheorem{remark}[theorem]{Remark}
\begin{document}
\begin{frontmatter}
\title{On the computation of the HNF of a module over the ring of integers of a number field}    
\author{Jean-Fran\c{c}ois Biasse}
\address{Department of Mathematics and Statistics\\University of South Florida}
\ead{biasse@usf.edu}

\author{Claus Fieker}
\address{Fachbereich Mathematik\\Universit\"{a}t Kaiserslautern\\Postfach 3049\\67653 Kaiserslautern - Germany}
\ead{fieker@mathematik.uni-kl.de}

\author{Tommy Hofmann}
\address{Fachbereich Mathematik\\Universit\"{a}t Kaiserslautern\\Postfach 3049\\67653 Kaiserslautern - Germany}
\ead{thofmann@mathematik.uni-kl.de}

\begin{keyword}
Number field\sep  Dedekind domain\sep  Hermite normal form\sep  relative extension of number fields
\MSC 11Y40
\end{keyword}

%%%%%%%%%%%%%%%%%%%%%%%%%%%%%%%%%%%%%%%%%%%%%%%%%%%%%%%%%%%%%%%%%%%%%%%%%%%%%%%%
%
%  Abstract
%
%%%%%%%%%%%%%%%%%%%%%%%%%%%%%%%%%%%%%%%%%%%%%%%%%%%%%%%%%%%%%%%%%%%%%%%%%%%%%%%%

\begin{abstract}
We present a variation of the modular algorithm for computing the Hermite normal form of an $\OK$-module presented by 
Cohen~\cite{Cohen1996}, where $\OK$ is the ring of integers of a number field $K$. An approach presented in~\cite{Cohen1996} 
based on reductions modulo ideals was 
conjectured to run in polynomial time by Cohen, but so far, 
no such proof was available in the literature. In this paper, we present a modification of the approach of~\cite{Cohen1996} to 
prevent the coefficient swell and we rigorously assess its complexity with respect to the size of the input 
and the invariants of the field $K$.
\end{abstract}

\end{frontmatter}

\section{Introduction}

Algorithms for modules over the rational integers such as the Hermite normal form algorithm
are at the core of all methods for computations with rings and ideals in 
finite extensions of the rational numbers. Following the growing
interest in relative extensions, that is, finite extensions of number fields, 
the structure of modules over Dedekind domains became 
important. On the theoretical side, it was well known that the framework
of finitely generated projective modules was well suited for these problems, but explicit algorithms
were lacking for a long time. Based on the pioneering work of
of Bosma and Pohst~\cite{Bosma1991}, the computation of a Hermite normal form (HNF) over principal ideal domains was generalized to finitely generated modules over Dedekind domains by Cohen~\cite{Cohen1996} (for a comparison between the work of Bosma--Pohst and Cohen, see~\cite[Chap. 6]{Hoppe1998}). 
It was conjectured that Cohen's algorithm \cite[Algorithm 3.2]{Cohen1996} for computing this so-called pseudo-Hermite normal form (pseudo-HNF) has polynomial complexity (see \cite[Remark after Algorithm 3.2]{Cohen1996}): ``\textellipsis and it seems plausible that \ldots this algorithm is, in fact, polynomial-time.''
The polynomial complexity of a (modified) version of Cohen's algorithm was conjectured in the folklore
but not formally proved until the preliminary version of this study in
the \mbox{ISSAC} proceedings \cite{issac12}.
The difficulties in establishing this formally were two-fold:
The original algorithm does not control the size of the coefficient ideals, and,
most of the underlying field and ideal operations themselves have not been
analyzed completely.
While the ideal operations, which are based on Hermite normal forms over the rational integers,
are known to have polynomial complexity, the exact complexity was previously
not investigated in detail hence
a byproduct of this discussion is a computational model for algebraic number fields together with an analysis of basic field and ideal operations.

Based on our careful analysis we also compare the complexity of
algorithms for finitely generated projective modules over the ring of integers $\OK$ of a number field $K$ based on the 
structure as $\OK$-modules with algorithms based on the structure as free
$\Z$-modules of larger rank.
In practice, algebraic number fields $L$ of large degree are carefully constructed as relative extensions $\Q \subseteq K \subseteq L$.
The computational complexity of element and ideal operations in $L$ depend on both $d = [K:\Q]$ and $n = [L:K]$.
Ideals of the ring of integers $\mathcal O_L$ of $L$ are naturally $\Z$-modules of rank $dn$ and therefore ideal arithmetic is reduced to computation of $\Z$-modules of rank $dn$.
On the other hand, the ring of integers $\mathcal O_L$ and its ideals are finitely generated projective modules of rank $n$ over the Dedekind domain $\mathcal O_K$.
Thus the ideal arithmetic in $\mathcal O_L$ can be performed using the pseudo-HNF algorithm and it is only natural then to ask which method to prefer.
\par
In addition, Fieker and Stehl{\'e}'s recent algorithm for computing a reduced basis of $\mathcal O_K$-modules relies on the conjectured possibility to compute a pseudo-HNF for an $\OK$-module with polynomial complexity~\cite[Th. 1]{Fieker2010}.
This allows a reduction algorithm for $\OK$-modules with polynomial complexity, similar to the LLL algorithm for $\Z$-modules.

In the same way as for $\Z$-modules, where the HNF can be used to
compute the Smith normal form, the pseudo-HNF enables us to determine a pseudo-Smith
normal form.
The pseudo-Smith normal form gives the structure of torsion $\OK$-modules, and
is used to study the quotient of two modules. Applications include the
investigation of Galois cohomology \cite{McQuillian}.

In all of our algorithms and the analysis we assume that the maximal order, $\OK$, is part
of the input.

\subsection*{Our contribution} 
Let $K$ be a number field with ring of integers $\OK$.
We present in this paper the first algorithm for computing a
pseudo-HNF of an $\OK$-module which has a proven polynomial complexity. Our algorithm is based on the modular approach of Cohen~\cite[Chap.
1]{Cohen2000} extending and correcting the version from the \mbox{ISSAC} proceedings \cite{issac12}.
We derive bounds on its complexity with respect to the size of
the input, the rank of the module and the invariants of the field.

As every $\OK$-module is naturally a $\Z$-module (of larger rank), we then compare the complexity of module operations as $\OK$-modules
to the complexity of the same operations as $\Z$-modules.
In particular, we show that the complexity of the $\OK$-module approach 
with respect to the degree of the field $K$ is (much) worse
than in the $\Z$-module approach. This is due to the (bad) performance of our
key tool: An algorithm to establish tight bounds on the norms of the
coefficient ideals during the pseudo-HNF algorithm.

As an application of our algorithm, we extend the techniques to also give
an algorithm with polynomial complexity to compute the pseudo-Smith normal form 
associated to $\OK$-modules, which is a constructive variant of the elementary divisor theorem for modules over $\OK$. Similarly to the pseudo-HNF, this is the first algorithm
for this task that is proven to have polynomial complexity.

\subsection*{Outline}
In order to discuss the complexity of our algorithms, we start by introducing
our computational model and natural representations of the involved objects.
Next, suitable definitions for size of the objects are introduced and the behavior under necessary operations is analyzed.

Once the size of the objects is settled, we proceed to develop algorithms
for all basic operations we will encounter and prove complexity
results for all algorithms. In particular, this section contains
algorithms and their complexity for most common ideal operations in
number fields. While most of the methods are folklore, this is the
first time their complexity has been stated.

Next, the key new technique, the normalization of the coefficient ideals,
is introduced and analyzed. Finally, after all the tools are in place,
we move to the module theory.
Similar to other modular algorithms, we first need to find a suitable modulus.
Here this is the determinantal ideal, which is the product of fractional ideals
and the determinant of a matrix with entries in $\OK$.
In Section 5 we 
present a Chinese remainder theorem based algorithm for the determinant
computation over rings of integers and analyze its complexity.

In Section 6, we get to the main result: An explicit algorithm
that will compute a pseudo-HNF for any full rank module
over the ring of integers. The module is specified via a pseudo-generating
system (pairs of fractional ideals of the number field $K$ and vectors in
$K^m$). Under the assumption that the module has full rank and that it is
contained in $\OK^m$, we prove the following (see Theorem \ref{thm:pseudohnf}):

\begin{theorem-non}
  There exists an algorithm (Algorithm \ref{alg:pseudohnf}), that given $n$
  pseudo-generators of an $\OK$-module of full rank contained in $\OK^m$,
  computes a pseudo-HNF with polynomial complexity.
\end{theorem-non}

Actually, a more precise version is proven. The exact dependency on the 
ring of integers $\OK$, the dimension of the module and the size of the
generators is presented.
Note that we assume that certain data of the number field $K$ is precomputed,
including an integral basis of the ring of integers (see
Section~\ref{sec:sizecost}).

In the final section, we apply the pseudo-HNF algorithm to derive a
pseudo-Smith normal form algorithm and analyze its complexity,
achieving polynomial time complexity as well (Algorithm~\ref{alg:pseudosnf} and
Proposition~\ref{prop:pseudosnf}).

%%%%%%%%%%%%%%%%%%%%%%%%%%%%%%%%%%%%%%%%%%%%%%%%%%%%%%%%%%%%%%%%%%%%%%%%%%%%%%%%
%
%  Preliminaries
%
%%%%%%%%%%%%%%%%%%%%%%%%%%%%%%%%%%%%%%%%%%%%%%%%%%%%%%%%%%%%%%%%%%%%%%%%%%%%%%%%

\section{Preliminaries}

\subsection*{Number fields}
Let $K$ be a number field of degree $d$ and signature $(r_1,r_2)$.
That is $K$ admits $r_1$ real embeddings and $2r_2$ complex embeddings. We can embed $K$ in $K_\R = K \otimes_\Q \R \simeq \R^{r_1} \times \C^{r_2}$ and extend all embeddings to $K_\R$.
The $d$-dimensional real vector space $K_\R$ carries a Hermitian form $T_2$ defined by $T_2(\alpha,\beta) = \sum_{\sigma} \sigma(\alpha) \overline \sigma(\beta)$ for $\alpha,\beta \in K_\R$, where the sum runs over all embeddings, and an associated norm $\left\| \phantom{\alpha} \right\|$ defined by $\left\| \alpha \right\| = \sqrt{T_2(\alpha,\alpha)}$ for $\alpha \in K_\R$.
The ring $\mathcal O_K$ of algebraic integers is the maximal order of $K$ and therefore a $\Z$-lattice of rank $d$ with $\mathcal O_K \otimes_\Z \Q = K$.
Given any $\Z$-basis $\omega_1,\dotsc,\omega_d$ of $\mathcal O_K$, the discriminant $\Delta_K$ of the number field $K$ is defined as $\Delta_K = \det(\operatorname{Tr}(\omega_i\omega_j)_{i,j})$, where $\operatorname{Tr}$ denotes the trace of the finite field extension $\mathbb Q \subseteq K$.
The norm of an element $\alpha \in K$ is defined by $\inorm{\alpha} = \operatorname{N}^K_\Q(\alpha) = \prod_\sigma \sigma(\alpha)$ and is equal to the usual field norm of the algebraic extension $K\supseteq \Q$.
For $\alpha \in K$, $M_\alpha$ denotes the $d \times d$ rational matrix
corresponding to $K \to K, \beta \mapsto \alpha \beta$, with respect to a
$\mathbb Q$-basis of $K$ and is called the regular representation of $\alpha$.
Here, using a fixed $\mathbb Q$-basis of $K$, elements are identified with
row-vectors in $\mathbb Q^{1\times d}$.
\par
To represent $\OK$-modules we rely on a generalization of the notion of ideal, namely the fractional ideals of $\OK$.
They are defined as finitely generated $\Z$-submodules of $K$.
When a fractional ideal is contained in $\OK$, we refer to it as an integral ideal, which is in fact an ideal of the ring $\OK$.
Otherwise, for every fractional ideal $\mathfrak a$ of $\OK$, there exists $r\in\Z_{>0}$ such that $r\mathfrak a$ is integral. The minimal positive integer with this property is defined as the denominator of the fractional ideal $\mathfrak a$ and is denoted by $\operatorname{den}(\mathfrak a)$.
The sum of two fractional ideals of $\OK$ is the usual sum as $\Z$-modules and the product of two fractional ideals $\mathfrak a$, $\mathfrak b$ is given by the $\Z$-module generated by $\alpha \beta$ with $\alpha \in \mathfrak a$ and $\beta \in \mathfrak b$.
The set of fractional ideals of $\OK$ forms a monoid with identity $\OK$ and where the inverse of $\mathfrak a$ is  
$\mathfrak a^{-1}:= \{ \alpha \in K\mid \alpha \mathfrak a\subseteq \OK\}$.
Each fractional ideal $\mathfrak a$ of $K$ is a free $\Z$-module of rank $d$ and given any $\Z$-basis matrix $N_\mathfrak a \in \Q^{d\times d}$ we define the norm $\operatorname{N}(\mathfrak a)$ of $\mathfrak a$ to be $\lvert \det(N_\mathfrak a)\rvert \in \Q$.
%The set of fractional ideals is equipped with a norm function defined by $\inorm{\mathfrak a} = \lvert \det(N_\mathfrak a) \rvert$ where $N_\mathfrak a$ is any $\Z$-basis matrix of $\mathfrak a$.
The norm is multiplicative, and in the case $\mathfrak a$ is an integral ideal the norm of $\mathfrak a$ is equal to $[\OK : \mathfrak a]$, the index of $\mathfrak a$ in $\mathcal O_K$.
Also note that the absolute value of the norm of $\alpha \in K$ agrees with the norm of the principal ideal $\alpha \OK$.

\subsection*{$\mathcal O_K$-modules and the pseudo-Hermite normal form over Dedekind domains}
In order to describe the structure of modules over Dedekind domains we rely on the notion of pseudoness introduced by Cohen \cite{Cohen1996}, see also \cite[Chapter 1]{Cohen2000}. Note that, different to \cite{Cohen1996}, our modules are generated by row vectors instead of column vectors and we therefore perform row operations. Let $M$ be a non-zero finitely generated torsion-free $\mathcal O_K$-module and $V = K\otimes_{\mathcal O_K} M$, a finite dimensional $K$-vector space containing $M$.
An indexed family $(\alpha_i,\mathfrak a_i)_{1 \leq i \leq n}$ consisting of $\alpha_i \in V$ and fractional ideals $\mathfrak a_i$ of $K$ is called a pseudo-generating system of $M$ if
\[ M = \mathfrak a_1 \alpha_1 + \dotsb + \mathfrak a_n \alpha_n \]
and a pseudo-basis of $M$ if
\[ M = \mathfrak a_1 \alpha_1 \oplus \dotsb \oplus \mathfrak a_n \alpha_n. \]
A pair $(A,I)$ consisting of a matrix $A \in K^{n\times m}$ and a list of fractional ideals $I = (\mathfrak a_i)_{1 \leq i \leq n}$ is called a pseudo-matrix.
Denoting by $A_1,\dotsc,A_n \in K^{1 \times m}$ the $n$ rows of $A$, the sum $\sum_{i=1}^n \mathfrak a_i A_i$ is a finitely generated torsion-free $\mathcal O_K$-module associated to this pseudo-matrix.
Conversely every finitely generated torsion-free module $M$ gives rise to a pseudo-matrix whose associated module is $M$.
In case of finitely generated torsion-free modules over principal ideal domains, the task of finding a basis of the module can be reduced to finding the Hermite normal form (HNF) of the associated matrix.
If the base ring is a Dedekind domain there exists a canonical form for pseudo-matrices, the pseudo-Hermite normal form (pseudo-HNF), which plays the same role as the HNF for principal ideal domains allowing us to construct pseudo-bases from pseudo-generating systems.
More precisely let $A \in K^{n \times m}$ be of rank $m$, $(A,I)$ a pseudo-matrix and $M$ the associated $\mathcal O_K$-module. Then there exists an $n\times n$ matrix $U = (u_{i,j})_{i,j}$ over $K$ and $n$ non-zero fractional ideals $\mathfrak b_1,\dotsc,\mathfrak b_n$ of $K$ satisfying
\begin{enumerate}
\item
  for all $1 \leq i,j \leq n$ we have $u_{i,j} \in \mathfrak b_i^{-1} \mathfrak a_j$,
\item
  the ideals satisfy $\prod_i \mathfrak a_i = \det(U) \prod_i \mathfrak b_i$,
\item
  the matrix $UA$ is of the form
  \[ \left( \begin{array}{c} H \\ \hline \mathbf{0} \end{array}\right),\]
  where $H$ is an $m \times m$ lower triangular matrix over $K$ with $1$'s on the diagonal and $\mathbf{0}$ denotes the zero matrix of suitable dimensions.
\item
  $M = \mathfrak b_1 H_1 \oplus \dotsb \oplus \mathfrak b_n H_n$ where $H_1,\dotsc,H_n$ are the rows of $H$.
\end{enumerate}
\par 
The pseudo-matrix $(H,(\mathfrak b_i)_{1 \leq i \leq n})$ is called a pseudo-Hermite normal form (pseudo-HNF) of $(A,I)$ resp. of $M$. Note that with this definition, a pseudo-HNF of an $\OK$-module is not unique. In~\cite{Bosma1991,Cohen1996,Hoppe1998}, reductions of the coefficients of $A$ modulo certain ideals provide uniqueness of the pseudo-HNF when the reduction algorithm is fixed.
\par
Throughout the paper will make the following restriction: We assume that the associated module $M$ is a subset of $\mathcal O_K^m$. For if $M \subseteq K^m$ there exists an integer $k \in \Z_{>0}$ such that $kM \subseteq \mathcal O_K^m$.
In case of a square pseudo-matrix $(A, I)$ the determinantal ideal $\mathfrak d((A, I))$ is defined as to be $\det(A)\prod_{\mathfrak a \in I} \mathfrak a$.
For a pseudo-matrix $(A, I)$, $A \in \mathcal O_K^{n \times m}$ of rank $m$, we define the determinantal ideal $\mathfrak d((A, I))$ to be the $\gcd$ of all determinantal ideals of all $m \times m$ sub-pseudo-matrices of $(A, I)$ (see \cite{Cohen1996}).

%%%%%%%%%%%%%%%%%%%%%%%%%%%%%%%%%%%%%%%%%%%%%%%%%%%%%%%%%%%%%%%%%%%%%%%%%%%%%%%%
%
%  Size and Costs in Algebraic Number Fields
%
%%%%%%%%%%%%%%%%%%%%%%%%%%%%%%%%%%%%%%%%%%%%%%%%%%%%%%%%%%%%%%%%%%%%%%%%%%%%%%%%

\section{Size and costs in algebraic number fields}\label{sec:sizecost}

In order to state the complexity of the pseudo-HNF algorithm, we will now describe representations and algorithms of elements and ideals in number fields, which are the objects we have to compute with.
The algorithms and representations chosen here are by no means optimal for all problems involving algebraic number fields.
We have chosen the linear algebra heavy approach since it allows for efficient algorithms of the normalization of ideals and reduction of elements with respect to ideals, which are crucial steps in the pseudo-HNF algorithm.
For different approaches to element arithmetic we refer the interested reader to~\cite[4.2]{Cohen1993} and~\cite{Belabas2004}.
For ideal arithmetic (in particular ideal multiplication) fast Las Vegas type algorithm are available making use of a 2-element ideal representation (see \cite{Cohen1993, Belabas2004}).
As our aim is a \textit{deterministic} polynomial time pseudo-HNF algorithm, we will not make use of them.

\subsection*{A notion of size.} 
To ensure that our algorithm for computing a pseudo-HNF basis of an $\OK$-module runs in polynomial time, we need a notion of size that bounds the bit size required to represent ideals and field elements.
We assume that the maximal order $\mathcal O_K$ of $K$ is given by a fixed $\Z$-basis $\Omega = (\omega_1,\dotsc,\omega_d)$ with $\omega_1 = 1$.

\subsubsection*{Size of ideals}
A non-zero integral ideal $\mathfrak a \subseteq \OK$ is a $d$-dimensional $\Z$-submodule of $\OK$ and will be represented by its unique (lower triangular) HNF basis $M_\mathfrak a \in \Z^{d\times d}$ with respect to the fixed integral basis $\Omega$.
The size required to store the matrix is therefore bounded by $d^2\log(\lvert M_\mathfrak a \rvert)$, where $\log$ denotes the binary logarithm.
Since we assume that $\omega_1$ is set to $1$ the value $\lvert M_\mathfrak a \vert$ is actually equal to $\min\{ a \in \Z_{>0} \, | \, a \in \mathfrak a \}$.
(For $A = (a_{i,j})_{i,j} \in \Z^{d \times d}$ we denote $\max_{i,j} \lvert a_{i,j} \rvert$ by $\lvert A \rvert$.)
The latter is the well known \textit{minimum} of the integral ideal $\mathfrak a$, which is denoted by $\min(\mathfrak a)$ and can be characterized as the unique positive integer with $\mathfrak a \cap \Z = (\min(\mathfrak a))$.
Based on this observation we define
\[ \bs(\mathfrak a) = d^2 \log(\min(\mathfrak a)) \]
to be the \textit{size} of $\mathfrak a$.
If $\mathfrak a = {\tilde{\mathfrak a}}/{k}$ is a fractional ideal of $K$, where $\mathfrak a\subseteq\OK$ and $k \in\Z_{>0}$ is the denominator of $\mathfrak a$, we define the \textit{size} of $\mathfrak a$ by 
\[ \bs(\mathfrak a) = \bs(\tilde{\mathfrak a}) + d^2 \log(k). \]
The weight $d^2$ on the denominator is introduced to have a nice behavior 
with respect to the common ideal operations. Before we show that,  we need to recall some basic facts about the minimum of integral ideals.
The weight can also be seen as viewing the ideal as given by a rational matrix directly.

\begin{proposition}\label{size:minimum}
  Let $\mathfrak a, \mathfrak b$ be integral ideals and $k \in \Z$, $k \neq 0$. Then the following holds:
  \begin{enumerate}
    \item
      $\min(\mathfrak a + \mathfrak b)$ divides $\operatorname{GCD}(\min(\mathfrak a),\min(\mathfrak b))$.
    \item
      $\min(\mathfrak a \mathfrak b)$ divides $\min(\mathfrak a)\min(\mathfrak b)$.
    \item
      The denominator of $\mathfrak a^{-1}$ is equal to $\min(\mathfrak a)$.
    \item
      $\min(k\mathfrak a) = \lvert k\rvert \min(\mathfrak a)$.
    \item
      $\min(\mathfrak a)$ divides $\inorm {\mathfrak a}$.
  \end{enumerate}
\end{proposition}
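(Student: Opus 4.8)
The plan is to prove each of the five statements separately, relying throughout on the characterization $\min(\mathfrak a)\mathbb Z = \mathfrak a \cap \mathbb Z$, together with standard multiplicativity of the norm and the fact that $\mathcal O_K$ is a Dedekind domain.

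For (1): since $\mathfrak a \subseteq \mathfrak a + \mathfrak b$, any integer lying in $\mathfrak a$ also lies in $\mathfrak a + \mathfrak b$, hence $\mathfrak a \cap \mathbb Z \subseteq (\mathfrak a + \mathfrak b)\cap\mathbb Z$, which says $\min(\mathfrak a+\mathfrak b)$ divides $\min(\mathfrak a)$; symmetrically it divides $\min(\mathfrak b)$, hence it divides their GCD. For (2): $\min(\mathfrak a)\min(\mathfrak b)$ is a rational integer lying in $\mathfrak a\mathfrak b$ (as a product of an element of $\mathfrak a\cap\mathbb Z$ and an element of $\mathfrak b\cap\mathbb Z$), so it is a multiple of $\min(\mathfrak a\mathfrak b)$. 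For (4): if $m = \min(\mathfrak a)$ then $km \in k\mathfrak a \cap \mathbb Z$, so $\min(k\mathfrak a) \mid |k|m$; conversely if $\ell \in k\mathfrak a \cap \mathbb Z_{>0}$ then $\ell/k \in \mathfrak a$; but a priori $\ell/k$ need not be an integer, so one argues instead that $|k| \cdot \mathfrak a \cap \mathbb Z = |k|(\mathfrak a \cap \mathbb Z)$ directly — this is the exact equality one wants, since multiplication by the nonzero integer $k$ is an injective $\mathbb Z$-linear map and intersecting with $\mathbb Z = k^{-1}(k\mathbb Z) \cap \cdots$ needs a small check. For (5): $\min(\mathfrak a)$ is the order of the image of $1$ in $\mathcal O_K/\mathfrak a$, which divides the exponent of the finite abelian group $\mathcal O_K/\mathfrak a$, which in turn divides its order $[\mathcal O_K : \mathfrak a] = \mathrm N(\mathfrak a)$.

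For (3): write $m = \min(\mathfrak a)$, so $m \in \mathfrak a$, hence $m\mathcal O_K \subseteq \mathfrak a$, hence $\mathfrak a^{-1} \subseteq (m\mathcal O_K)^{-1} = \tfrac1m\mathcal O_K$; thus $m\mathfrak a^{-1}\subseteq\mathcal O_K$ is integral, so $\operatorname{den}(\mathfrak a^{-1}) \mid m$. For the reverse divisibility one must show that no proper divisor $m' \mid m$ clears the denominator, i.e. $m'\mathfrak a^{-1} \not\subseteq \mathcal O_K$; equivalently $\mathfrak a^{-1} \cap \mathbb Z$ is exactly $\tfrac1m\mathbb Z$. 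Here I would use that $\mathfrak a^{-1}\cap\mathbb Z = \{x \in \mathbb Q : x\mathfrak a \subseteq \mathcal O_K\}$ and that for a rational $x = a/b$ in lowest terms, $x\mathfrak a \subseteq \mathcal O_K$ forces $b \mid \alpha$ for every $\alpha \in \mathfrak a$ in a suitable sense; the cleanest route is to note $x \in \mathfrak a^{-1}\cap\mathbb Q$ iff $x^{-1}\mathcal O_K \supseteq$ (the $\mathbb Q$-span issue aside) — more carefully, $\mathfrak a (\mathfrak a^{-1}\cap\mathbb Q\mathcal O_K)$ interacts with $\mathfrak a \cap \mathbb Z$, and one shows the denominator of a rational in $\mathfrak a^{-1}$ must divide $\min(\mathfrak a)$ by pairing it against the generator $m$ of $\mathfrak a\cap\mathbb Z$ and using $\mathfrak a\mathfrak a^{-1} = \mathcal O_K$, hence contains an element of $\mathbb Z$ coprime enough. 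I expect part (3) to be the main obstacle: unlike the others it is an equality rather than a one-sided divisibility, and it genuinely uses invertibility of ideals in the Dedekind domain (it is false for non-invertible modules), so the argument must invoke $\mathfrak a\mathfrak a^{-1}=\mathcal O_K$ rather than purely elementary intersection manipulations. The remaining parts are short and follow the inclusion-of-lattices pattern sketched above.
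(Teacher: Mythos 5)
Your proposal is essentially correct, and it is worth noting that the paper itself gives no argument at all (the proof there is the single line ``Follows from the definition''), so you are filling in details the authors left to the reader. Parts (1), (2) and (5) are complete as written. In (4) the worry that $\ell/k$ need not be an integer resolves itself immediately: if $\ell \in k\mathfrak a \cap \Z$ then $\ell/k$ lies in $\mathfrak a \cap \Q$, and since $\mathfrak a \subseteq \OK$ and $\OK \cap \Q = \Z$ this is $\mathfrak a \cap \Z = \min(\mathfrak a)\Z$; hence $(k\mathfrak a)\cap\Z = k(\mathfrak a\cap\Z)$ exactly, which is the equality you wanted.

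The one place where your write-up does not close is the reverse divisibility in (3), where you circle around the right ingredient ($\mathfrak a\mathfrak a^{-1} = \OK$) without landing on the argument. It is two lines: if $r \in \Z_{>0}$ satisfies $r\mathfrak a^{-1} \subseteq \OK$, then
\[ r\OK \;=\; r\,\mathfrak a^{-1}\mathfrak a \;\subseteq\; \OK\,\mathfrak a \;=\; \mathfrak a, \]
so $r \in \mathfrak a \cap \Z = \min(\mathfrak a)\Z$ and $\min(\mathfrak a) \mid r$. Combined with your (correct) forward direction $\min(\mathfrak a)\,\mathfrak a^{-1} \subseteq \OK$, this shows the denominator of $\mathfrak a^{-1}$ is exactly $\min(\mathfrak a)$. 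There is no need to analyze rational elements of $\mathfrak a^{-1}$ in lowest terms or to worry about which divisors of $\min(\mathfrak a)$ ``clear the denominator''; your instinct that invertibility is the essential input (and that the statement fails for non-invertible modules) is right, and the displayed line is where it is used.
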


\begin{proof}
  Follows from the definition.
\end{proof}

The properties of the minimum translate easily into corresponding properties of the size of integral ideals.
The next proposition shows that in fact the same relations hold also for fractional ideals.

\begin{proposition}\label{size:ideals}
  Let $\mathfrak a,\mathfrak b \subseteq K$ be fractional ideals and $m \in \Z$, $m \neq 0$. Then the following holds:
  \begin{enumerate}
    \item $\bs(m\mathfrak a) \leq \bs(\mathfrak a) + d^2 \log(\lvert m \rvert)$.
    \item $\bs(\mathfrak a + \mathfrak b) \leq 2(\bs(\mathfrak a)+\bs(\mathfrak b))$.
    \item $\bs(\mathfrak a \mathfrak b) \leq \bs(\mathfrak a) + \bs(\mathfrak b)$
    \item $\bs(\mathfrak a ^{-1}) \leq 2\bs(\mathfrak a)$.
  \end{enumerate}
\end{proposition}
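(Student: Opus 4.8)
The plan is to reduce every statement about the size $\bs$ of a fractional ideal to the corresponding statement about the minimum of an integral ideal (Proposition~\ref{size:minimum}), by writing each fractional ideal in the normalized form $\mathfrak a = \tilde{\mathfrak a}/k$ with $\tilde{\mathfrak a} \subseteq \OK$ integral and $k = \operatorname{den}(\mathfrak a) \in \Z_{>0}$, so that $\bs(\mathfrak a) = d^2\log(\min(\tilde{\mathfrak a})) + d^2\log(k)$. The subtlety throughout is that when one combines two fractional ideals by an ideal operation, the naive denominator one obtains need not be \emph{the} denominator; it is only \emph{a} common denominator, i.e. a multiple of $\operatorname{den}$. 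But this only helps: if $\mathfrak a = \mathfrak b/\ell$ for some integral $\mathfrak b$ and some $\ell \in \Z_{>0}$ that is possibly larger than $\operatorname{den}(\mathfrak a)$, then still $\bs(\mathfrak a) \le d^2\log(\min(\mathfrak b)) + d^2\log(\ell)$, because scaling $\mathfrak b$ and $\ell$ down by their common integer factor only decreases both terms (using part (4) of Proposition~\ref{size:minimum} for the minimum, and the obvious monotonicity for the $\log \ell$ term). So the strategy is: for each of the four claims, exhibit a convenient (not necessarily minimal) integral-ideal-over-integer representation of the result, and bound its size using Proposition~\ref{size:minimum}.

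Concretely: for (1), $m\mathfrak a = (m\tilde{\mathfrak a})/k$, and $\min(m\tilde{\mathfrak a}) = \lvert m\rvert\min(\tilde{\mathfrak a})$ by Proposition~\ref{size:minimum}(4), giving $\bs(m\mathfrak a) \le \bs(\mathfrak a) + d^2\log\lvert m\rvert$ directly. For (3), write $\mathfrak a = \tilde{\mathfrak a}/k$ and $\mathfrak b = \tilde{\mathfrak b}/\ell$; then $\mathfrak a\mathfrak b = (\tilde{\mathfrak a}\tilde{\mathfrak b})/(k\ell)$ with $\tilde{\mathfrak a}\tilde{\mathfrak b}$ integral, so $\bs(\mathfrak a\mathfrak b) \le d^2\log(\min(\tilde{\mathfrak a}\tilde{\mathfrak b})) + d^2\log(k\ell) \le d^2\log(\min\tilde{\mathfrak a}) + d^2\log(\min\tilde{\mathfrak b}) + d^2\log k + d^2\log\ell = \bs(\mathfrak a)+\bs(\mathfrak b)$, using Proposition~\ref{size:minimum}(2). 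For (2), $\mathfrak a+\mathfrak b = (\ell\tilde{\mathfrak a} + k\tilde{\mathfrak b})/(k\ell)$, the numerator integral; bound $\min(\ell\tilde{\mathfrak a}+k\tilde{\mathfrak b})$ using Proposition~\ref{size:minimum}(1) together with (4): it divides $\gcd(\ell\min\tilde{\mathfrak a},\, k\min\tilde{\mathfrak b})$, which divides $\ell\min\tilde{\mathfrak a} \le \ell\cdot\min\tilde{\mathfrak a}$, hence $d^2\log\min(\text{num}) + d^2\log(k\ell) \le d^2(\log\ell + \log\min\tilde{\mathfrak a} + \log k + \log\ell)$, and one checks this is at most $2(\bs(\mathfrak a)+\bs(\mathfrak b))$ (the factor $2$ absorbs the extra $\log\ell$; the constant is slack and any symmetric bound works). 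For (4), the key is Proposition~\ref{size:minimum}(3): from $\mathfrak a = \tilde{\mathfrak a}/k$ one has $\mathfrak a^{-1} = k\,\tilde{\mathfrak a}^{-1}$, and $\tilde{\mathfrak a}^{-1}$ has denominator exactly $\min(\tilde{\mathfrak a})$, so $\mathfrak a^{-1} = (k\,\min(\tilde{\mathfrak a})\,\tilde{\mathfrak a}^{-1})/\min(\tilde{\mathfrak a})$ with integral numerator; then $\bs(\mathfrak a^{-1}) \le d^2\log\bigl(\min(\min(\tilde{\mathfrak a})\,\tilde{\mathfrak a}^{-1})\bigr) + d^2\log(k\min\tilde{\mathfrak a})$, and since $\min(\tilde{\mathfrak a})\tilde{\mathfrak a}^{-1} \subseteq \OK$ is an integral ideal of norm dividing a power controlled by $\min(\tilde{\mathfrak a})$ — more simply, its minimum divides $\min(\tilde{\mathfrak a})$ because $\min(\tilde{\mathfrak a}) \in \min(\tilde{\mathfrak a})\tilde{\mathfrak a}^{-1} \cap \Z$ is not obviously minimal but at least $\min(\min(\tilde{\mathfrak a})\tilde{\mathfrak a}^{-1})$ divides $\min(\tilde{\mathfrak a})$ — both terms are bounded by $d^2\log\min(\tilde{\mathfrak a}) + d^2\log k \le \bs(\mathfrak a)$, wait, more carefully one gets $\le 2\bs(\mathfrak a)$.

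The main obstacle I anticipate is bookkeeping around the denominator in parts (2) and (4): making precise that the "obvious" denominator $k\ell$ (resp. $\min(\tilde{\mathfrak a})$) of the computed representative is a multiple of the true $\operatorname{den}$, and that $\bs$ computed on a non-reduced representative still dominates the true $\bs$. This needs a short lemma to the effect that for any integral $\mathfrak c$ and any $\ell \in \Z_{>0}$ one has $\bs(\mathfrak c/\ell) \le d^2\log\min(\mathfrak c) + d^2\log\ell$, which follows because cancelling $g = \gcd$ of $\ell$ and the "content" of $\mathfrak c$ replaces $(\min\mathfrak c, \ell)$ by $(\min\mathfrak c/g', \ell/g)$ for suitable divisors, decreasing the right-hand side. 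Once that is in hand, all four bounds are immediate from Proposition~\ref{size:minimum}, and the looseness of the stated constants (the $2$'s) gives ample room, so no delicate estimate is actually required — the constants are chosen precisely so that one never has to be careful.
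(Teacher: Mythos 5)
Your proposal is correct and follows essentially the same route as the paper: write each fractional ideal as an integral ideal over its denominator, exhibit a (not necessarily reduced) integral-over-integer representative of the result of each operation, and fall back on Proposition~\ref{size:minimum}. The one thing you make explicit that the paper's proof silently assumes is the monotonicity observation that $\bs$ evaluated on a non-reduced representative $\mathfrak c/\ell$ (with $\mathfrak c$ integral, $\ell\in\Z_{>0}$) overestimates $\bs(\mathfrak c/\ell)$; that is the precise content of the paper's inequalities of the form $\bs(\mathfrak a+\mathfrak b)\leq \bs(l\tilde{\mathfrak a}+k\tilde{\mathfrak b})+d^2\log(kl)$ and $\bs(\mathfrak a^{-1})\leq\bs(\tilde{\mathfrak a}^{-1})+d^2\log(k)$, so spelling it out is harmless and arguably clearer, but it does not change the argument.
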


\begin{proof}
  Note that if $\mathfrak a$ and $\mathfrak b$ are integral ideals then (1), (2) and (3) follow immediately from the properties of the minimum obtained in Proposition~\ref{size:minimum}.
  Write $\mathfrak a = {\tilde{\mathfrak a}}/{k}$ and $\mathfrak b = {\tilde{\mathfrak b}}/{l}$ with $k$ and $l$ the denominator of $\mathfrak a$ and $\mathfrak b$ respectively.\\
  (1): We have
  \[ \bs(m \mathfrak a) \leq \bs(m\tilde{\mathfrak a}) + d^2 \log(k) \leq \bs(\tilde{\mathfrak a}) + d^2 \log(\lvert m \rvert) + d^2 \log(k) = \bs(\mathfrak a) + d^2 \log(\lvert m \rvert). \]
  (2): As the sum $\mathfrak a + \mathfrak b$ is equal to $(l\tilde{\mathfrak a} + k \tilde{\mathfrak b})/{kl}$ we obtain
  \begin{align*} \bs(\mathfrak a + \mathfrak b) \leq \bs(l\tilde{\mathfrak a} + k \tilde{\mathfrak b}) + d^2\log(kl) &\leq \bs(\tilde{\mathfrak a}) + d^2 \log(l) + \bs(\tilde{\mathfrak b}) + d^2 \log(k) + \bs(\mathfrak a) + \bs(\mathfrak b) \\
    &= 2 ( \bs(\mathfrak a) + \bs(\mathfrak b)). \end{align*}
  (3): We have
  \[ \bs(\mathfrak a \mathfrak b) \leq \bs(\tilde{\mathfrak a}\tilde{\mathfrak b}) + d^2 \log(k) + d^2 \log(l) \leq \bs(\mathfrak a) + \bs(\mathfrak b). \]
  (4): Consider first the integral case: We know that $\min(\tilde{\mathfrak a}) \in \tilde{\mathfrak a}$.
  Thus the principal ideal $(\min(\tilde{\mathfrak a}))$ is divided by $\tilde{\mathfrak a}$ and there exists an integral ideal $\mathfrak b$ with $(\min(\tilde{\mathfrak a})) = \tilde{\mathfrak a} \mathfrak b$, i.\,e.,
  \[ \tilde{\mathfrak a}^{-1} = \frac{\mathfrak b}{\min(\tilde{\mathfrak a})}. \]
  Note that $\min(\tilde{\mathfrak a}) \in \mathfrak b$ and therefore $\min(\mathfrak b) \leq \min(\tilde{\mathfrak a})$. 
  As $\min(\tilde{\mathfrak a})$ is the denominator of $\tilde{\mathfrak a}^{-1}$ by Proposition~\ref{size:minimum}~(4) we obtain
  \[ \bs(\tilde{\mathfrak a}^{-1}) = \bs(\mathfrak b) + d^2 \log(\min(\tilde{\mathfrak a})) \leq 2 \bs(\tilde{\mathfrak a}). \]
  Returning to the general case we have $\mathfrak a^{-1} = k \tilde{\mathfrak a}^{-1}$.
  Then
  \[ \bs(\mathfrak a^{-1}) = \bs(\tilde{\mathfrak a}^{-1}) + d^2 \log(k) \leq 2 \bs(\tilde{\mathfrak a}) + 2 d^2 \log(k) = 2 \bs(\mathfrak a). \qedhere\]
\end{proof}

\subsubsection*{Size of elements.}
The integral basis $\Omega$ allows us to represent an integral element $\alpha \in \mathcal O_K$ by its coefficient vector $(a_1,\dotsc,a_d) \in \Z^d$ satisfying $\alpha = \sum_{i=1}^d a_i \alpha_i$.
The size to store the element $\alpha$ is therefore bounded by
\[ \bs(\alpha) = d \max_{i} \log( |a_i|),\]
which we call the \textit{size} of $\alpha$ with respect to $\Omega$.
This can be faithfully generalized to elements $\alpha \in K$.
Writing $\alpha= \tilde\alpha/k$ with $k \in \Z_{>0}$ the denominator of $\alpha$ we define
\[ \bs(\alpha) = \bs(\tilde\alpha) + d\log(k) \]
to be the size of $\alpha$. Similarly to  the ideals above, as added the weight
$d$ to the denominator to achieve a nicer transformation behavior under
the standard operations. Its justification also comes from viewing elements
in $K$ as rational vectors rather than integral elements with a common
denominator.
\par
In order to relate our function $\bs$ to the multiplicative structure on $K$ we need to recall that the notion of size of elements is closely related to norms on the $\R$-vector space $K_\R$.
More precisely, the fixed integral basis $\Omega$ gives rise to an isomorphism\[ \Phi \colon K_\R \to \R^d, \, \sum_{i=1}^d a_i \omega_i \longmapsto (a_1,\dotsc,a_d), \]
onto the $d$-dimensional real vector space.
Equipping $\R^d$ with the $\infty$-norm we have $d \log (\| \Phi(\alpha) \|_\infty) = \bs(\alpha)$ for $\alpha \in \OK$.
But this is not the only way to identify $K_\R$ with a normed real vector space.
Denote the $r_1$ real embeddings by $(\sigma_i)_{1 \leq i \leq r_1}$ and the $2r_2$ complex embeddings by $(\sigma_i)_{r_1 + 1 \leq i \leq r_1 +2r_2}$.
We use the usual ordering of the complex embeddings, such that $\sigma_{r_2+k} = \overline \sigma_{k}$ for $r_1 < k \leq r_1 + r_2$. Using these embeddings we define
\begin{align*} \Psi \colon K_\R &\longrightarrow  \R^{r_1} \times \R^{2r_2} \\
  \alpha &\longmapsto (\sigma_i(\alpha))_{1 \leq i \leq r},(\operatorname{Re} \sigma_i(\alpha) + \operatorname{Im} \sigma_i(\alpha),\operatorname{Re} \sigma_i(\alpha) - \operatorname{Im} \sigma_i(\alpha))_{r_1 < i \leq 2r_2 + 1}
\end{align*}
yielding $\left\|\Psi(\alpha)\right\|_2 = \|\alpha\|$ for $\alpha \in K$, where $\left\|\phantom{x}\right\|_2$ denotes the $2$-norm on $\mathbb R^{r_1+2r_2}$.
Since $\R$ is complete, any two norms on $K_\R$ are equivalent.
Thus there exists constants $C_1,C_2 \in \R_{>0}$ depending on $K$ and the chosen basis $\Omega$ with
\begin{equation}\label{ineq:1}
    \frac 1 {C_2} \left\| \Phi(\alpha) \right\|_\infty \leq \|\alpha\| \leq C_1 \|\Phi(\alpha)\|_\infty,
\end{equation}
for all $\alpha \in K$.
Moreover we have the inequalities
\begin{equation}\label{ineq:2}
  \|\alpha\| \leq \sqrt d \, \max_\sigma \left|\sigma(\alpha)\right|, \quad \max_\sigma \left|\sigma(\alpha)\right| \leq \|\alpha\|,
\end{equation}
for all $\alpha \in K$ and applying the geometric arithmetic mean inequality yields
\begin{equation}\label{ineq:3}
  \lvert \inorm{\alpha} \rvert \leq \frac{\left\| \alpha \right\|^d}{d^{d/2}}.
\end{equation}

Another important characteristic of an integral basis $\Omega$ is the size of the structure constants $(m_{i,j}^k)_{i,j,k}$, which are defined by the relations
\[ \omega_i \omega_j = \sum_{k=1}^d m_{i,j}^k \omega_k \]
for $1 \leq i,j \leq d$. We denote the maximum value $\max_{i,j,k} |m_{i,j}^k|$ by $C_3$.

\begin{remark} 
  Note that there is a situation in which we are able to estimate the constants $C_1,C_2,C_3$.
  Assume that $\Omega$ is LLL-reduced with respect to $T_2$ and LLL parameter $c$.
  Then by \cite[Proposition 5.1]{Belabas2004} the basis $\Omega$ satisfies
  \[ \left \| \omega_i \right \|^2 \leq \left( d^{-(i-1)} c^{d(d-1)/2} \lvert \Delta_K \rvert \right)^{1/(d-i+1)} \]
  for all $1 \leq i \leq d$. Moreover the structure constants satisfy
  \[ \lvert m_{i,j}^k\rvert \leq \frac{c^{3d(d-1)/4}}{d^{d-(1/2)}} \lvert \Delta_K \rvert \quad 1 \leq i,j,k \leq d, \]
  and thus we can choose
  \[ C_1 = \max_i \left( d^{-(i-1)} c^{d(d-1)/2} \lvert \Delta_K \rvert \right)^{1/2(d-i+1)}, \quad C_3 = \frac{c^{3d(d-1)/4}}{d^{d-(1/2)}}. \]
  By \cite[Lemma 2]{Fieker2010} we have  $\left\| \Phi(\alpha) \right\|_\infty \leq 2^{3d/2} \left\| \alpha \right\|$ for all $\alpha \in K$ allowing for $C_2 = 2^{3d/2}$.
\end{remark}

Using the preceding discussion we can now describe the relation between size and the multiplicative structure of $\mathcal O_K$.
If $\alpha = \sum_{i=1}^d a_i \omega_i$ and $\beta = \sum_{j=1}^d b_j \omega_j$ are integral elements the product $\alpha\beta$ is equal to $\sum_{k=1}^d c_k \omega_k$ with

\[ c_k = \sum_{i=1}^d a_i \sum_{j=1}^d b_j m_{i,j}^k .\]
Thus for the size of $\alpha\beta$ we obtain
\[ \bs(\alpha \beta) \leq \bs(\alpha) + \bs(\beta) + 2d \log(d) + d \log(C_3). \]
The constant $2d\log(d) + d\log(C_3)$ therefore measures the increase of size when multiplying two integral elements.
\par
The second multiplicative operation is the inversion of integral elements.
Let $\alpha^{-1} = \beta/k$ with $k \in \Z_{>0}$ the denominator of $\alpha^{-1}$ and $\beta \in \mathcal O_K$.
Using $k \leq \lvert\inorm{\alpha}\rvert$ and Inequality~(\ref{ineq:3}) we obtain $\log(k) \leq d \log (C_1) + d \log(\left\| \alpha \right\|_\infty) - \frac d 2 \log(d)$.
Since
\[ \lvert \sigma(\beta) \rvert = \frac{\sigma(k)}{\lvert \sigma(\alpha)\rvert} = \frac{k} {\lvert \sigma(\alpha) \rvert} \leq \frac{\lvert \inorm{\alpha} \rvert}{\lvert \sigma(\alpha) \rvert} = \prod_{\tau \neq \sigma} \lvert \tau(\alpha)\rvert \leq \left\| \alpha \right\|^{d-1} \]
for every embedding $\sigma$ we get $\left \| \beta \right\| \leq \sqrt d \left\| \alpha \right\|^{d-1}$ by Inequality~(\ref{ineq:2}).
Combining this with the estimate for the denominator yields
\[ \bs(\alpha^{-1}) = d \log(k) + \bs(\beta) \leq d \bs(\alpha) + d^2 \log(C_1) + d \log(C_2) .\]
Again we see that there is a constant depending on $\Omega$ describing the increase of size during element inversion.
We define $C_\Omega$ by
\[ C_\Omega = \max\{ 2d \log(d) + d\log(C_3), d^2 \log(C_1) + d\log(C_2) \} \]
to obtain a constant incorporating both operations.
Since we work with a fixed basis we drop the $\Omega$ from the index and denote this constant just by $C$.
So far the obtained bounds on the size are only valid for integral elements and it remains to prove similar relations for the whole of $K$.
We begin with the multiplicative structure.

\begin{proposition}\label{comp:prop1}
  For all $\alpha, \beta \in K$ and $m \in \Z$, $m \neq 0$, the following holds:
 \begin{enumerate}
   \item
     $\bs(m\alpha) = \bs(\alpha) + d \log(\lvert m \rvert)$.
    \item
      $\bs(\alpha \beta) \leq \bs(\alpha) + \bs(\beta) + C$,
    \item 
      $\bs(\alpha^{-1}) \leq d\bs(\alpha) + C$.
  \end{enumerate}
\end{proposition}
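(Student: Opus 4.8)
The plan is to reduce each statement about general elements of $K$ to the corresponding statement about integral elements, which was established in the discussion preceding the proposition. Write $\alpha = \tilde\alpha/k$ and $\beta = \tilde\beta/l$, where $k,l\in\Z_{>0}$ are the denominators and $\tilde\alpha,\tilde\beta\in\OK$. Recall that by definition $\bs(\alpha)=\bs(\tilde\alpha)+d\log(k)$ and similarly for $\beta$.

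For part (1): the element $m\alpha$ equals $(m\tilde\alpha)/k$, but one must be slightly careful since $k$ need not be the denominator of $m\alpha$ (common factors of $m$ and $k$ could cancel). However, using part (1) of the integral discussion one direction is immediate, and for equality one can argue that the denominator of $m\alpha$ is exactly $k/\gcd(m,k)$ while the numerator picks up a compensating factor $m/\gcd(m,k)$; tracking the two contributions shows $\bs(m\alpha)=\bs(\tilde\alpha)+d\log(|m|)+d\log(k)=\bs(\alpha)+d\log(|m|)$. Alternatively, and more cleanly, one observes that $\bs$ is just $d\log\|\Phi(\cdot)\|_\infty$ on $K$ once one views elements as rational vectors, and scaling a rational vector by $m$ scales the $\infty$-norm by $|m|$ exactly; this is the "rational vector" viewpoint already flagged in the text, and it makes (1) a one-line observation.

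For part (2): the product $\alpha\beta$ equals $(\tilde\alpha\tilde\beta)/(kl)$, again up to cancellation, so using the integral bound $\bs(\tilde\alpha\tilde\beta)\leq \bs(\tilde\alpha)+\bs(\tilde\beta)+C$ (since $2d\log(d)+d\log(C_3)\leq C$) and the definition of size for fractional elements,
\[ \bs(\alpha\beta) \leq \bs(\tilde\alpha\tilde\beta) + d\log(kl) \leq \bs(\tilde\alpha)+\bs(\tilde\beta)+C+d\log(k)+d\log(l) = \bs(\alpha)+\bs(\beta)+C. \]
Here one uses that passing to a possibly smaller denominator only decreases the size, so the displayed inequality survives the cancellation. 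For part (3): one has $\alpha^{-1} = k\,\tilde\alpha^{-1}$, so combining the integral inversion bound $\bs(\tilde\alpha^{-1})\leq d\,\bs(\tilde\alpha)+C$ with part (1) gives
\[ \bs(\alpha^{-1}) = \bs(k\tilde\alpha^{-1}) = \bs(\tilde\alpha^{-1}) + d\log(k) \leq d\,\bs(\tilde\alpha) + C + d\log(k) \leq d\,\bs(\alpha) + C, \]
the last step because $d\log(k)\leq d(\bs(\tilde\alpha)+d\log(k))=d\,\bs(\alpha)$ — wait, more directly $d\,\bs(\tilde\alpha)+d\log(k)\leq d\,\bs(\tilde\alpha)+d\cdot d\log(k)=d\,\bs(\alpha)$, using $d\geq 1$.

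The main obstacle, such as it is, is purely bookkeeping: being careful that $k$ and $l$ are the \emph{denominators} (minimal) and that forming $m\tilde\alpha$, $\tilde\alpha\tilde\beta$, or $k\tilde\alpha^{-1}$ may not yield an element in lowest terms, so that the "obvious" denominator is only an upper bound for the true one. The resolution is the monotonicity remark — replacing a numerator/denominator pair by one with a common factor cancelled never increases $\bs$ — which makes all the inequalities go through in the stated direction. None of this requires the archimedean estimates (\ref{ineq:1})--(\ref{ineq:3}); those were already consumed in deriving the integral-element bounds that define $C$, and here we only need to propagate them through the denominator formalism.
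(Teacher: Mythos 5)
Your parts (2) and (3) follow the same route as the paper: reduce to the integral case, then bound the contribution of the denominators and note that reducing a numerator/denominator pair to lowest terms never increases $\bs$. The stream-of-consciousness correction at the end of (3) lands in the right place (the slack $d\log(k)\le d^2\log(k)$ from $d\ge 1$ is exactly what is needed), and this matches the paper's own argument.

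Part (1) is where the proposal goes wrong, and the error is instructive because the paper's own statement is slightly off here too. You claim equality by ``tracking the two contributions,'' asserting that the numerator picking up $m/\gcd(m,k)$ compensates the denominator dropping to $k/\gcd(m,k)$. It does not: writing $g = \gcd(|m|,k)$, the numerator $\bigl(|m|/g\bigr)\tilde\alpha$ contributes $\bs(\tilde\alpha) + d\log(|m|/g)$ and the new denominator $k/g$ contributes $d\log(k/g)$, so
\[
  \bs(m\alpha) = \bs(\tilde\alpha) + d\log(|m|) + d\log(k) - 2d\log(g) = \bs(\alpha) + d\log(|m|) - 2d\log(g),
\]
i.e. the two $\log(g)$ corrections have the \emph{same} sign. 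Equality already fails for $K = \Q$, $\alpha = 1/2$, $m = 2$, where $\bs(m\alpha) = \bs(1) = 0$ while $\bs(\alpha) + d\log|m| = 2$. Your ``rational vector'' alternative fails for the same reason: for $\alpha = \tilde\alpha/k$ in lowest terms one has $\bs(\alpha) = d\log\|\Phi(\alpha)\|_\infty + 2d\log(k)$, not $d\log\|\Phi(\alpha)\|_\infty$; the paper's remark about rational vectors motivates the weight $d$ on the denominator, it is not an identity for $\bs$. What is actually true — and what the paper's own proof establishes, despite the proposition stating ``$=$'' — is only the inequality $\bs(m\alpha) \le \bs(\alpha) + d\log|m|$, and that is all that is ever used downstream. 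In particular in your proof of (3) the step $\bs(k\tilde\alpha^{-1}) = \bs(\tilde\alpha^{-1}) + d\log(k)$ should likewise be a ``$\le$'' (the denominator of $\tilde\alpha^{-1}$ may share a factor with $k$), after which the argument is correct.
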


\begin{proof} We write $\alpha = \tilde\alpha/k$ and $\beta = \tilde \beta/l$ with $k$ and $l$ the denominator of $\alpha$ and $\beta$ respectively.
  Note that by the choice of $C$ items (2) and (3) hold for integral elements.
  (1): From the definition of the size it follows that $\bs(m\tilde\alpha) = \bs(\tilde\alpha) + d\log(\lvert m \rvert)$.
  Since the denominator of $m\alpha$ is bounded by $k$ we have
  \[ \bs(m\alpha) \leq \bs(m\tilde\alpha) + d \log(k) = \bs(\alpha) + d \log(\lvert m \rvert). \]
  (2): Since the denominator of $\alpha\beta$ is bounded by $kl$ we obtain
  \[ \bs(\alpha\beta) \leq \bs(\tilde\alpha\tilde\beta) + d\log(kl) \leq \bs(\alpha) + \bs(\beta) + C. \]
  (3): The inverse of $\alpha$ is equal to $k \tilde\alpha^{-1}$. Therefore using (1) we get
  \[ \bs(\alpha^{-1}) = \bs(\tilde\alpha^{-1}) + d \log(k) = d \log(k) + \bs(\tilde\alpha) + C \leq d \bs(\alpha) + C. \qedhere \]
\end{proof}

We now investigate the additive structure.

\begin{proposition}\label{size:elementsadd}
  If $\alpha$ and $\beta$ are elements of $K$ then $\bs(\alpha + \beta) \leq 2( \bs(\alpha) + \bs(\beta))$.
\end{proposition}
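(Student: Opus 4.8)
The plan is to mirror the proof of Proposition~\ref{size:ideals}(2): first reduce to a statement about integral elements, where addition with respect to the fixed basis $\Omega$ is carried out coordinate-wise, and then reintroduce the denominators, letting the factor $2$ on the right-hand side absorb the constant-order slack.

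First I would write $\alpha = \tilde\alpha/k$ and $\beta = \tilde\beta/l$ with $\tilde\alpha,\tilde\beta \in \OK$ and $k,l \in \Z_{>0}$ the denominators of $\alpha$ and $\beta$; the case where $\alpha$ or $\beta$ vanishes is trivial, so I may assume both are nonzero. Since $\alpha + \beta = (l\tilde\alpha + k\tilde\beta)/(kl)$ with $l\tilde\alpha + k\tilde\beta \in \OK$, the denominator of $\alpha + \beta$ divides $kl$, whence
\[ \bs(\alpha + \beta) \le \bs(l\tilde\alpha + k\tilde\beta) + d\log(kl). \]

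The core step is then an additive estimate for integral elements. Writing $\tilde\alpha = \sum_i a_i\omega_i$ and $\tilde\beta = \sum_i b_i\omega_i$, the coordinate vector of $l\tilde\alpha + k\tilde\beta$ is $(la_i + kb_i)_i$, and the triangle inequality gives $|la_i + kb_i| \le l|a_i| + k|b_i|$ for every $i$. Taking the maximum over $i$, using $\max_i(l|a_i| + k|b_i|) \le l\max_i|a_i| + k\max_i|b_i|$, and then bounding a sum of two positive quantities by twice their product — which is legitimate since $l\max_i|a_i| \ge 1$ and $k\max_i|b_i| \ge 1$ for nonzero integral elements — I obtain $\bs(l\tilde\alpha + k\tilde\beta) \le \bs(\tilde\alpha) + \bs(\tilde\beta) + d\log k + d\log l$, up to an additive term of order $d$ from that last step (and with no extra term at all under the true bit-size reading of $\bs$ from Section~\ref{sec:sizecost}, since then $(|a_i|+1)(|b_i|+1) \ge |a_i + b_i| + 1$). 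Substituting into the previous display yields
\[ \bs(\alpha + \beta) \le \bs(\tilde\alpha) + \bs(\tilde\beta) + 2d\log k + 2d\log l. \]

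Finally I would regroup the right-hand side as $\bigl(\bs(\tilde\alpha) + 2d\log k\bigr) + \bigl(\bs(\tilde\beta) + 2d\log l\bigr)$ and use $\bs(\tilde\alpha) + 2d\log k \le 2\bigl(\bs(\tilde\alpha) + d\log k\bigr) = 2\bs(\alpha)$ (since $\bs(\tilde\alpha) \ge 0$), and symmetrically for $\beta$, to conclude $\bs(\alpha + \beta) \le 2(\bs(\alpha) + \bs(\beta))$. The one place that needs care is precisely the accounting for the factor $2$: the common denominator $kl$ is paid for twice — once to clear denominators inside the numerator $l\tilde\alpha + k\tilde\beta$, and once through the denominator weight in the definition of $\bs$ — so, in contrast with the sharper-looking bound $\bs(\alpha)+\bs(\beta)$ one might hope for (compare the multiplicative estimate in Proposition~\ref{comp:prop1}(2), which only loses an additive $C$), a factor $2$ is genuinely needed here, and it also comfortably absorbs the $O(d)$ coordinate-growth contribution coming from the triangle inequality.
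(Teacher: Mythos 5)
Your proposal matches the paper's proof: both clear denominators to reduce to a bound on the integral numerator $l\tilde\alpha + k\tilde\beta$, and then absorb the double payment for the denominator $kl$ into the factor $2$. You are, if anything, more careful than the paper, which asserts the integral estimate $\bs(\tilde\alpha+\tilde\beta)\le\bs(\tilde\alpha)+\bs(\tilde\beta)$ as ``easy to see'', while you rightly observe that it reduces to $x+y\le xy$ with $x=\max_i|a_i|$, $y=\max_i|b_i|$ --- which fails when $x$ or $y$ equals $1$ and in general only gives $x+y\le 2xy$, an $O(d)$ additive slack that makes the stated inequality strictly false at, e.g., $\alpha=\beta=1$; this is a harmless constant-order imprecision that the paper's own proof silently shares.
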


\begin{proof}
  It is easy to see that $\bs(\alpha +\beta) \leq \bs(\alpha) + \bs(\beta)$ if $\alpha$ and $\beta$ are integral elements.
  Now write $\alpha = \tilde\alpha/k$ and $\beta = \tilde\beta/l$ with $k$ and $l$ the denominator of $\alpha$ and $\beta$ respectively.
  Then we obtain  $\bs(l\tilde\alpha + k\tilde\beta) \leq \bs(\tilde\alpha) + \bs(\tilde\beta) + d\log(k) + d \log(l) = \bs(\alpha) + \bs(\beta)$ and finally
  \[ \bs(\alpha+\beta) \leq \bs(l\tilde\alpha + k\tilde\beta) + d \log(kl) \leq 2 (\bs(\alpha) + \bs(\beta)). \qedhere \]
\end{proof}

Finally we need the mixed operation between ideals and elements.

\begin{proposition}\label{size:elementideal}
  Let $\alpha \in K$ and $\mathfrak a \subseteq K$ be a fractional ideal. Then $\bs(\alpha \mathfrak a) \leq \bs(\mathfrak a) + d^2 \bs(\alpha) + dC$.
\end{proposition}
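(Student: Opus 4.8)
The plan is to imitate the structure of Propositions~\ref{size:ideals} and~\ref{comp:prop1}: first clear the denominators, then handle the purely integral case by bounding the minimum of the product ideal through the norm of the element. Write $\alpha = \tilde\alpha/l$ and $\mathfrak a = \tilde{\mathfrak a}/k$, where $\tilde\alpha \in \OK$, $\tilde{\mathfrak a} \subseteq \OK$ is integral, and $k,l \in \Z_{>0}$ are the respective denominators (the case $\alpha = 0$ being trivial). Then $\alpha\mathfrak a = \tilde\alpha\tilde{\mathfrak a}/(kl)$ with $\tilde\alpha\tilde{\mathfrak a}$ an integral ideal, and since the denominator of $\alpha\mathfrak a$ divides $kl$, exactly as in the proof of Proposition~\ref{size:ideals} one gets
\[ \bs(\alpha\mathfrak a) \le \bs(\tilde\alpha\tilde{\mathfrak a}) + d^2\log(kl). \]
It therefore suffices to prove the integral estimate $\bs(\tilde\alpha\tilde{\mathfrak a}) \le \bs(\tilde{\mathfrak a}) + d^2\bs(\tilde\alpha) + d^3\log(C_1)$ and then reassemble.

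For the integral estimate I would write $\tilde\alpha\tilde{\mathfrak a}$ as the product of the integral ideals $\tilde\alpha\OK$ and $\tilde{\mathfrak a}$. Proposition~\ref{size:minimum}~(2) gives that $\min(\tilde\alpha\tilde{\mathfrak a})$ divides $\min(\tilde\alpha\OK)\min(\tilde{\mathfrak a})$, while Proposition~\ref{size:minimum}~(5), together with $\inorm{\tilde\alpha\OK} = \lvert\inorm{\tilde\alpha}\rvert$, gives that $\min(\tilde\alpha\OK)$ divides $\lvert\inorm{\tilde\alpha}\rvert$. Hence $\bs(\tilde\alpha\tilde{\mathfrak a}) = d^2\log(\min(\tilde\alpha\tilde{\mathfrak a})) \le d^2\log\lvert\inorm{\tilde\alpha}\rvert + \bs(\tilde{\mathfrak a})$. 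The one non-formal point is to bound $\log\lvert\inorm{\tilde\alpha}\rvert$ by $\bs(\tilde\alpha)$ up to the field constant; this is done exactly as in the element-inversion estimate preceding Proposition~\ref{comp:prop1}, namely by chaining Inequality~(\ref{ineq:3}), Inequality~(\ref{ineq:1}) and the identity $d\log\|\Phi(\tilde\alpha)\|_\infty = \bs(\tilde\alpha)$ valid for integral $\tilde\alpha$, which yields $\log\lvert\inorm{\tilde\alpha}\rvert \le d\log\|\tilde\alpha\| \le d\log(C_1) + \bs(\tilde\alpha)$ and hence $d^2\log\lvert\inorm{\tilde\alpha}\rvert \le d^3\log(C_1) + d^2\bs(\tilde\alpha)$.

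Reassembling, $\bs(\alpha\mathfrak a) \le \bigl(\bs(\tilde{\mathfrak a}) + d^2\log k\bigr) + \bigl(d^2\bs(\tilde\alpha) + d^2\log l\bigr) + d^3\log(C_1)$. The first parenthesis is $\bs(\mathfrak a)$ by definition; the second is at most $d^2\bigl(\bs(\tilde\alpha) + d\log l\bigr) = d^2\bs(\alpha)$ since $d\ge 1$; and $d^3\log(C_1) \le dC$ by the choice of $C$, which dominates $d^2\log(C_1)$. This gives $\bs(\alpha\mathfrak a) \le \bs(\mathfrak a) + d^2\bs(\alpha) + dC$, as claimed. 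I expect the only genuine obstacle to be the norm bound in the middle step: one must trade the size of the rational integer $\inorm{\tilde\alpha}$ for the coefficient size of $\tilde\alpha$, and this is precisely where the norm-equivalence constant $C_1$, and thus $C$, is forced into the estimate.
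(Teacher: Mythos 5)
Your proposal is correct and follows essentially the same route as the paper's proof: reduce to the integral case via denominators, bound $\min(\tilde\alpha\tilde{\mathfrak a})$ by $\min((\tilde\alpha))\min(\tilde{\mathfrak a})$, and control $\min((\tilde\alpha))$ through $\lvert\inorm{\tilde\alpha}\rvert$ using the norm-equivalence constant $C_1$ (which is then absorbed into $C$). You have merely spelled out the intermediate norm bound $\log\lvert\inorm{\tilde\alpha}\rvert \le d\log(C_1) + \bs(\tilde\alpha)$ a bit more explicitly than the paper, which simply cites Inequalities~(\ref{ineq:1}) and~(\ref{ineq:3}).
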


\begin{proof}
  We consider first the integral case $\alpha \in \mathcal O_K$ and $\mathfrak a \subseteq \mathcal O_K$.
  Using Inequalities~(\ref{ineq:1}) and (\ref{ineq:3}) the minimum of the principal ideal $(\alpha)$ can be bounded by $C_1^d \|\alpha\|_\infty^d$.
  Thus we have
  \[ \bs(\alpha\mathfrak a) = d^2 \log(\min(\alpha\mathfrak a)) \leq d^2 \log(\min(\alpha)) + d^2 \log(\min(\mathfrak a)) \leq d^2 \bs(\alpha) + \bs(\mathfrak a) + dC. \]
  Now let $\alpha = \tilde\alpha/k$ and $\mathfrak a = \tilde{\mathfrak a}/l$ with $k$ and $l$ the denominator of $\alpha$ and $\mathfrak a$ respectively.
  Using the integral case we obtain
  \begin{align*} \bs(\alpha\mathfrak a) \leq \bs(\tilde\alpha\tilde{\mathfrak a}) + d^2 \log(kl) &\leq \bs(\tilde{\mathfrak a}) + d^2 \log(l) + d^2 \bs(\tilde\alpha) + d^2 \log(k) + dC \\ 
    &= \bs(\mathfrak a) + d^2 \bs(\alpha) + dC. \qedhere\end{align*}
\end{proof}

\subsection*{Calculating in $K$.}

In this section, we evaluate the complexity of the basic operations performed during the pseudo-HNF algorithm.
To simplify the representation of complexity results, we use soft-Oh notation $\OT$: We have $f \in \OT(g)$ if and only if there exists $k \in \Z_{>0}$ such that $f \in O(g (\log(g))^k)$.
We multiply two integers of bit size $B$ with complexity in $\OT(B)$ using the Sch\"onhage--Strassen algorithm \cite{Schoenhage1971}.
While the addition of such integers is in $O(B)$, their division has complexity in $\OT(B)$.
\par
As most of our algorithms are going to be based on linear algebra over rings,
mainly $\Z$, we start be collecting the complexity of the used algorithms.
The basic problem of determining the unique solution $x\in \Q^n $ to the equation $Ax=b$ with $A \in \Z^{n \times n}$ non-singular, $b \in \Z^{n}$ can be done using Dixon's $p$-adic algorithm \cite{Dixon1982} in $\OT(n^3(\log (\lvert A \rvert) + \log(\lvert b \rvert)))$.
\par
As we represent integral ideals using the HNF basis, the computation of this form is at the heart of ideal arithmetic. 
Note, that in contrast to the standard case in the literature \cite{Hafner1991,Storjohann1996} we do not want to state the complexity in terms of
the determinant (or multiples thereof) but in terms of the elementary divisors.
As we will see, in our applications, we always know small multiples of the
elementary divisors and thus obtain tighter bounds. Important to the 
algorithms is the notion of a Howell form of a matrix as defined in \cite{Howell1986}. The Howell form generalizes the Hermite normal form to $\Z/\lambda \Z$ and
restores uniqueness in the presence of zero divisors.
For a matrix $A\in \Z^{n \times m}$ of rank $m$ we denote by $\HNF(A)$ the 
unique Hermite form of the matrix (with the off-diagonal elements reduced
into the positive residue system modulo the diagonal), while $\HOW_\lambda(A)$ will
denote the Howell form for $A\in (\Z/\lambda\Z)^{n \times m}$. In \cite{Storjohann1998} a naive algorithm is given that computes $\HOW_\lambda(A)$
in time $O(m^2\max(n,m))$ operations in $\Z/\lambda \Z$.
We also need the following facts:
\begin{lemma}\label{lem:claus}
Let $A\in \Z^{n\times m}$ and $\lambda\in \Z$ such that $\lambda \Z^m
\subseteq [A]_\Z$ where $[A]_\Z$ denotes the $\Z$-module generated by the
rows of $A$. Then the following holds:
\begin{enumerate}
  \item 
  We have
  \[ \HNF(A) = \HNF \left( \begin{array}{c} A \\ \hline \lambda I_m \end{array}\right).\]
\item We have $\HNF(A) = \HOW_{\lambda^2}(A)$,
that is, the canonical lifting of the Howell form over $\Z/\lambda^2\Z$
yields the Hermite form over $\Z$.
\end{enumerate}
\end{lemma}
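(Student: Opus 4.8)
The plan is to deduce both statements from the single observation that, under the hypothesis, the rows of $\lambda I_m$ — and a fortiori the rows of $\lambda^2 I_m$ — already lie in the lattice $[A]_\Z$. For (1): the $i$-th row of $\lambda I_m$ is $\lambda e_i \in \lambda\Z^m \subseteq [A]_\Z$, hence the rows of $\left(\begin{smallmatrix} A \\ \lambda I_m \end{smallmatrix}\right)$ span the $\Z$-module $[A]_\Z + \lambda\Z^m = [A]_\Z$. Since $\HNF$ of a rank-$m$ integer matrix depends only on the $\Z$-module spanned by its rows (and that module has full rank $m$, as it contains $\lambda\Z^m$ with $\lambda \neq 0$, so both Hermite forms are defined), the two Hermite forms coincide. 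The very same argument gives the slightly more general fact that $\HNF(A) = \HNF\left(\begin{smallmatrix} A \\ \mu I_m \end{smallmatrix}\right)$ for every nonzero $\mu \in \Z$ with $\mu\Z^m \subseteq [A]_\Z$; I will invoke this with $\mu = \lambda^2$, which is legitimate since $\lambda^2\Z^m \subseteq \lambda\Z^m \subseteq [A]_\Z$.

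For (2): by the preceding remark with $\mu = \lambda^2$ it is enough to identify the canonical lift of $\HOW_{\lambda^2}(A)$ with $\HNF\left(\begin{smallmatrix} A \\ \lambda^2 I_m \end{smallmatrix}\right)$. This is exactly the correspondence the Howell form is built to provide: reduction modulo $\lambda^2$ gives a bijection between sublattices $L$ with $\lambda^2\Z^m \subseteq L \subseteq \Z^m$ and submodules of $(\Z/\lambda^2\Z)^m$; under it the lattice $[A]_\Z = [A]_\Z + \lambda^2\Z^m$ corresponds to the row module of $A \bmod \lambda^2$, the Howell normal form is the distinguished generating matrix of that module, and lifting its entries into the positive residue system returns the lower-triangular reduced $\Z$-basis of $L$, i.e.\ $\HNF\left(\begin{smallmatrix} A \\ \lambda^2 I_m \end{smallmatrix}\right)$ (this is the content of \cite{Howell1986,Storjohann1998}). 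To make the argument self-contained, one should check that no degenerate (zero) diagonal entry arises in the lift here: writing $H = \HNF(A)$, an $m\times m$ lower-triangular matrix whose rows form a $\Z$-basis of $[A]_\Z$, the condition $\lambda\Z^m \subseteq [A]_\Z$ is equivalent to $\lambda H^{-1} \in \Z^{m\times m}$, and since $H^{-1}$ is lower triangular with diagonal entries $h_{ii}^{-1}$ this forces $h_{ii} \mid \lambda$ for every $i$. Thus each diagonal entry of $H$ is at most $\lambda < \lambda^2$ and the off-diagonal entries are smaller still, so $H$ is unchanged by reduction modulo $\lambda^2$, is then manifestly in Howell form, and lifts back to $H = \HNF(A)$.

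The lattice identities and the divisibility $h_{ii}\mid\lambda$ are routine; the one point I would expect to require real care — and that I would spell out carefully in the final write-up — is the alignment of the normalization conventions of $\HNF$ and $\HOW_\lambda$ (lower-triangular echelon shape, pivots dividing the modulus, off-diagonal entries reduced, and the extra span-closure condition that repairs uniqueness in the presence of zero divisors), so that the reduction/lifting correspondence really does match the two canonical forms term by term. Once the conventions are fixed consistently, both items of the lemma follow.
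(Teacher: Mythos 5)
Your proposal is correct and follows essentially the same route as the paper: part (1) by module-invariance of the HNF, and part (2) by observing that the condition $\lambda\Z^m\subseteq[A]_\Z$ forces $h_{ii}\mid\lambda$, so $\HNF(A)$ survives reduction modulo $\lambda^2$ unchanged, and then appealing to uniqueness of the Howell form. The only place you are thinner than the paper is exactly the point you flag at the end: that the reduced matrix is ``manifestly in Howell form'' is not manifest for the span-closure property (any vector of the row module over $\Z/\lambda^2\Z$ with leading zeros must lie in the span of the corresponding trailing rows), and the paper proves this by a short lifting argument — any lift of such a vector is the sum of a vector in $\lambda^2\Z^m$ and an element of $[A]_\Z$ with the same leading zeros, hence lies in the span of the trailing rows of $\HNF(A)$, and this membership carries over modulo $\lambda^2$; a complete write-up should include that step rather than defer it to the references.
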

\begin{proof}
Since, by assumption, 
\[ [A]_\Z = \left[ \begin{array}{c} A \\ \hline \lambda I_m \end{array} \right]_\Z \] and the
Hermite form is an invariant of the module, the first claim is clear.

To show the second claim, it is sufficient to show that the reduction of
$\HNF(A)$ modulo $\lambda^2$ has all the properties of the Howell form.
Once this is clear, the claim follows from the uniqueness of the Howell form
as an invariant of the $\Z/\lambda^2\Z$ module and the fact that all entries
in $\HNF(A)$ are non-negative and bounded by $\lambda$.
The only property of the Howell form that needs verification, is the last
claim: any vector in $[A]_{\Z/\lambda^2\Z}$ having first coefficients
zero is in the span of the last rows of the Howell form. This follows directly
from the Hermite form: any lift of such a vector is a sum of a vector
in $\lambda^2\Z$ and an element in $[A]$ starting with the same number of
zeroes as the initial element. Such an element is clearly in the span of the
last rows of $\HNF(A)$ since the Hermite form describes a basis and the linear
combination carries over modulo $\lambda^2$.
The other properties of the Howell form are immediate: the reduction modulo
the diagonal as well as the overall shape is directly inherited from the Hermite form.
The final property, the normalization of the diagonal namely 
to divide $\lambda^2$ follows too from the Hermite form: since $\lambda\Z^m$
is contained in the module, the diagonal entries of the Hermite form
have to be divisors of $\lambda$, hence of $\lambda^2$. We note, that the reason
we chose $\lambda^2$ over $\lambda$ is to avoid problems with vanishing
diagonal elements: as all diagonal entries of the Hermite form are
divisors of $\lambda$, none of them can vanish in $\Z/\lambda^2\Z$.
\end{proof}

We can now derive the complexity of the HNF computation in terms of $\lambda$.

\begin{corollary}\label{cost:zhnf}
  Let $A \in \Z^{n\times m}$ be a matrix and $\lambda \in \Z$ such that $\lambda \Z^m \subseteq [A]_\Z$.
  Then the Hermite normal form of $A$ can be computed with complexity in $\OT(mn\log(\lvert A \rvert) + m^2\max(m,n)\log(\lvert \lambda \rvert))$.
\end{corollary}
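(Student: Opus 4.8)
The plan is to reduce the Hermite normal form computation over $\Z$ to a Howell form computation over $\Z/\lambda^2\Z$ by invoking Lemma~\ref{lem:claus}(2), which asserts $\HNF(A) = \HOW_{\lambda^2}(A)$, understood via the canonical lift. Concretely I would proceed in three steps: (i) reduce every entry of $A$ modulo $\lambda^2$; (ii) run Storjohann's naive Howell-form algorithm on the reduced matrix over the ring $\Z/\lambda^2\Z$, noting that under the hypothesis $\lambda\Z^m \subseteq [A]_\Z$ with $A$ of rank $m$ the resulting Howell form has exactly $m$ nonzero rows, matching the shape of $\HNF(A)$; (iii) take the canonical lift of that Howell form, which by Lemma~\ref{lem:claus}(2) is precisely $\HNF(A)$.

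For step (i): each of the $nm$ entries has bit size at most $\log\lvert A\rvert$, and reducing it modulo the integer $\lambda^2$ of bit size $O(\log\lvert\lambda\rvert)$ is a single division with remainder, costing $\OT(\log\lvert A\rvert + \log\lvert\lambda\rvert)$. Summing over all entries gives $\OT(nm\log\lvert A\rvert + nm\log\lvert\lambda\rvert)$; since $m \geq 1$ and $\max(m,n) \geq n$ we have $nm\log\lvert\lambda\rvert \leq m^2\max(m,n)\log\lvert\lambda\rvert$, so this contributes only $\OT(nm\log\lvert A\rvert)$ beyond the second target term.

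For step (ii): by the bound recalled just before Lemma~\ref{lem:claus}, the Howell form of an $n\times m$ matrix over $\Z/\lambda^2\Z$ is computed with $O(m^2\max(m,n))$ operations in $\Z/\lambda^2\Z$, and each such operation — addition, multiplication, or the extended-gcd and division steps of the stabilization — involves integers of bit size $O(\log\lvert\lambda\rvert)$ and hence costs $\OT(\log\lvert\lambda\rvert)$ with Sch\"onhage--Strassen arithmetic. Thus step (ii) costs $\OT(m^2\max(m,n)\log\lvert\lambda\rvert)$. Step (iii) only touches the $O(m^2)$ entries of the Howell form, each already of bit size $O(\log\lvert\lambda\rvert)$, and is therefore absorbed. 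Adding the three contributions yields the stated complexity $\OT(mn\log\lvert A\rvert + m^2\max(m,n)\log\lvert\lambda\rvert)$.

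The only genuinely delicate point — the one I would state explicitly rather than grind through — is the applicability of Lemma~\ref{lem:claus}(2): it requires $\lambda\Z^m \subseteq [A]_\Z$, which is exactly our hypothesis, together with $\lambda \neq 0$, which is forced because $[A]_\Z$ cannot contain the full-rank lattice $\lambda\Z^m$ for $A$ of rank $m$ otherwise. Everything else is routine accounting of integer arithmetic; the one thing worth double-checking is that the $nm\log\lvert\lambda\rvert$ term produced by the modular reduction in step (i) is dominated by the second term of the target bound, which it is, as noted above.
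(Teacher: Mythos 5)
Your proposal is correct and follows exactly the paper's route: invoke Lemma~\ref{lem:claus}(2) to identify $\HNF(A)$ with the canonical lift of $\HOW_{\lambda^2}(A)$, then run Storjohann's naive Howell-form algorithm in $O(m^2\max(m,n))$ operations over $\Z/\lambda^2\Z$. The paper's own proof is a two-line appeal to these same two ingredients; your accounting of the initial reduction modulo $\lambda^2$ (yielding the $mn\log\lvert A\rvert$ term) and of the per-operation cost $\OT(\log\lvert\lambda\rvert)$ simply makes explicit what the paper leaves implicit.
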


\begin{proof}
  The Lemma \ref{lem:claus} links the Hermite normal form to the 
  Howell form, while Storjohann's naive algorithm \cite{Storjohann1998}
  will compute the Howell form with the complexity as stated.
\end{proof}

We will see, that in our applications, we naturally know and control a multiple
of the largest elementary divisor, hence we can use this rather than
the determinant in our complexity analysis.
\par
Note that due to Storjohann and Mulders \cite{Storjohann1998} there exists asymptotically fast algorithms for computing the Howell form based on fast matrix multiplication.
Since our pseudo-HNF algorithm is a generalization of a non-asymptotically fast HNF algorithm over 
the integers and eventually we want to compare our pseudo-HNF algorithm with the absolute HNF algorithm it is only reasonable to not use asymptotically fast algorithms for the underlying element and ideal arithmetic.
\par
Concerning our number field $K$, we take the following precomputed data for granted:

\begin{itemize}
  \item
    An integral basis $\Omega = (\omega_i)_i$ of the maximal order $\OK$ satisfying $\omega_1 = 1$.
  \item
    The structure constants $M = (m_{i,j}^k)_{i,j,k}$ of $\Omega$.
  \item
    The matrix $DT^{-1}$, where $T = (\operatorname{Tr}(\omega_i\omega_j))_{i,j}$ and $D$ is the denominator of $T^{-1}$. Moreover using \cite[Theorem 3]{Fieker2010} we compute a LLL-reduced $2$-element representation $(\delta_1,\delta_2)$ of the ideal $\mathfrak B$ generated by the rows of $DT^{-1}$ with the property
    \[ \|\delta_i\| \leq 4 (2^{\frac d 2})^8 \left|\Delta_K\right|^{\frac 2 d} \left(C_1\right)^4, \quad \text{i.e., } \bs(\delta_i) \in \OT\left(\log( \lvert \Delta_K \rvert) + C\right) \]
    for $i = 1,2$. In addition we compute the regular representations $M_{\delta_1}$ and $M_{\delta_2}$.
  \item
    A primitive element of $K$ with minimal polynomial $f = X^d + \sum_{i=0}^{d-1} a_i X^i \in \Z[X]$, such that $\log(\max_i \lvert a_i \rvert) \leq C$ and $\log(\lvert \operatorname{disc}(f) \rvert) \in \OT(C)$.
    Such an element can be found as follows:
    By a theorem of Sonn and Zassenhaus \cite{Sonn1967} there exist $\varepsilon_1,\dotsc,\varepsilon_d \in \{0,1\}$ such that $\alpha = \sum_{i=1}^d \varepsilon_i \omega_i \in \mathcal O_K$ is a primitive element of the field extension $\mathbb Q \subseteq K$.
    Note that with the currently known methods finding such an element is exponentially costly with respect to $d$.
    Applying the $d$ embeddings $\sigma_j$ we obtain
    \[ \lvert \sigma_j(\alpha)\rvert \leq d \max_{i} \lvert \sigma_j(\omega_i) \rvert \leq d \max_i \left\| \omega_i \right\| \leq d C_1. \]
    Using these estimates for the conjugates of $\alpha$ we get the following bound on the coefficients of the minimal polynomial $f = X^d + \sum_{i=0}^{d-1} a_i X^i \in \mathbb Z[X]$ of $\alpha$: Since the elements $\sigma_j(\alpha)$, $1 \leq j \leq d$, are exactly the roots of $f$ we obtain
    \[ \lvert a_i \rvert = \lvert s_i ( \sigma_1(\alpha), \dotsc,\sigma_d(\alpha))\vert \leq {d \choose i} \max_{j} \lvert \sigma_j(\alpha)\rvert^i \leq d^d \max_j \lvert \sigma_j(\alpha)\rvert^d \leq d^d d^d C_1^d,
    \]
    for $0 \leq i \leq d-1$, where $s_i$ denotes the elementary symmetric polynomial of degree $i$.
    Therefore the height $\lvert f \rvert = \max_{i} \lvert a_i \rvert$ of $f$ can by estimated by 
    \[ \log (\lvert f \rvert) = \max_i \log(\lvert a_i \rvert) \leq 2d \log(d) + d \log(C_1) \leq C, \]
    As we have a bound for the absolute values of its roots, we can moreover derive the following estimate for the discriminant of $f$:
    \[ \lvert \operatorname{disc}(f) \vert = \prod_{i<j} \lvert \sigma_i(\alpha) - \sigma_j(\alpha)\rvert^2 \leq \lvert \max_j 2 \sigma_j(\alpha) \rvert ^{d^2} \leq 2^{d^2} \max_j \lvert \sigma_j(\alpha)\rvert^{d^2}.  \]
    Taking logarithms on both sides we obtain 
    \[ \log (\lvert \operatorname{disc}(f) \vert) \in O(\log( d^2 \max_j \lvert\sigma_j(\alpha)\rvert) \subseteq O(d^2(\log(d)+\log(C_1)) \subseteq \tilde O(C).\]
\end{itemize}

We do not impose any further restrictions on our integral basis $\Omega$.
All dependency on $\Omega$ is captured by $C = C_{\Omega}$.

\subsubsection*{Field arithmetic}

During our pseudo-HNF computation we need to perform additions, multiplications, and inversions of elements of $K$.
Although algorithms for these operations are well known (see \cite{Cohen1993,Belabas2004}) and many implementations can be found, there is a lack of references on the complexity.
While multiplication in $\mathcal O_K$ was investigated by Belabas \cite{Belabas2004}, all the other operations are missing.
We address the complexity issues in the rest of this section and begin with the additive structure.

\begin{proposition}
  Let $\alpha,\beta \in K$ and $m \in \Z$. We can
  \begin{enumerate}
    \item
      compute the product $m\alpha$ with complexity in $\OT(\bs(\alpha) + d \log(\lvert m \rvert))$.
    \item
      compute the quotient $\alpha/m$ with complexity in $\OT(\bs(\alpha) + \log(\lvert m \rvert))$.
    \item
      compute the sum $\alpha + \beta$ with complexity in $O(\bs(\alpha) + \bs(\beta))$.
  \end{enumerate}
\end{proposition}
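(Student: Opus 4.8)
The plan is to carry out each operation directly on the coefficient vector, keeping every element of $K$ in the normalized form used throughout: $\alpha=\tilde\alpha/k$ with $\tilde\alpha=\sum_{i=1}^d a_i\omega_i\in\OK$ and $k=\operatorname{den}(\alpha)\in\Z_{>0}$, where minimality of $k$ forces $\gcd(a_1,\dotsc,a_d,k)=1$. I will use repeatedly that a product or a division of $B$-bit integers costs $\OT(B)$ while a sum costs $O(B)$; that $\sum_i\log\lvert a_i\rvert\le\bs(\tilde\alpha)$; and that $\bs(\tilde\alpha)\le\bs(\alpha)$ and $d\log k\le\bs(\alpha)$. Write $c=\gcd(a_1,\dotsc,a_d)$, so that $\gcd(c,k)=1$.

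For (1), set $g=\gcd(m,k)$, a single integer gcd of cost $\OT(\log\lvert m\rvert+\log k)$. The content of $m\tilde\alpha$ is $mc$, and since $\gcd(c,k)=1$ a prime-by-prime check gives $\gcd(mc,k)=\gcd(m,k)=g$; hence $m\alpha=\bigl((m/g)\tilde\alpha\bigr)/(k/g)$ is already in normalized form. Forming $m/g$ and the $d$ products $(m/g)a_i$ costs $\OT(d\log\lvert m\rvert+\bs(\tilde\alpha))$, so the total is $\OT(\bs(\alpha)+d\log\lvert m\rvert)$.

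For (2), note $\alpha/m=\tilde\alpha/(km)$. First compute $c$ by $d-1$ successive gcds; each intermediate value has bit size at most $\max_i\log\lvert a_i\rvert$, so this costs $\OT(\bs(\tilde\alpha))$. Then set $g=\gcd(c,m)$ — a single gcd involving $m$, of cost $\OT(\bs(\tilde\alpha)+\log\lvert m\rvert)$ — and observe $\gcd(c,km)=\gcd(c,m)=g$ since $\gcd(c,k)=1$; thus $\alpha/m=(\tilde\alpha/g)/(km/g)$ is normalized. Dividing the $a_i$ by $g$ and forming $km/g$ costs $\OT(\bs(\tilde\alpha)+\log k+\log\lvert m\rvert)$, for a total of $\OT(\bs(\alpha)+\log\lvert m\rvert)$. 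This is the only step where the bound is delicate: computing $c$ once up front is precisely what turns the naive $d\log\lvert m\rvert$ — incurred by performing $d$ separate gcds $\gcd(a_i,m)$ — into the claimed $\log\lvert m\rvert$.

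For (3), with $\beta=\tilde\beta/l$ and $\tilde\beta=\sum_i b_i\omega_i$ we have $\alpha+\beta=(l\tilde\alpha+k\tilde\beta)/(kl)$: form the $2d$ products $la_i$, $kb_i$, add coordinatewise, form $kl$, and reduce by the content gcd as in (2). By the size estimates behind Proposition~\ref{size:elementsadd} all operands stay within $O(\bs(\alpha)+\bs(\beta))$, so the additions cost $O(\bs(\alpha)+\bs(\beta))$ and the remaining multiplications, gcds and divisions cost $\OT(\bs(\alpha)+\bs(\beta))$; the denominator-clearing multiplications are the only non-linear ingredient. Apart from the sharpness point in (2), the only thing to check is routine — that, via the invariant $\gcd(a_1,\dotsc,a_d,k)=1$ and the prime-by-prime gcd identities above, each reduced output really is the normalized representation — so I do not expect a genuine obstacle.
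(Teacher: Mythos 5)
Your proof is correct and follows essentially the same route as the paper: write elements as normalized coefficient vectors over a common denominator, compute coordinatewise, and restore coprimality via a content gcd. The one small overstatement is in (2), where you present computing $c=\gcd(a_1,\dotsc,a_d)$ up front as the essential trick for avoiding a $d\log\lvert m\rvert$ factor; the paper's single left-to-right chain $\gcd(m,a_1,\dotsc,a_d)$ already touches $m$ in only one gcd (after which the running value is bounded by $\lVert\tilde\alpha\rVert_\infty$), so both orderings give the same $\OT(\bs(\tilde\alpha)+\log\lvert m\rvert)$ bound.
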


\begin{proof}
  Let us write $\alpha = \tilde\alpha/k$ and $\beta = \tilde \beta /l$ with $k$ and $l$ the denominator of $\alpha$ and $\beta$ respectively.
  \par
  (1): Computing the GCD $g$ of $m$ and $k$ as well as $k/g$ and $m/g$ have complexity in $\OT(\bs(\alpha)/d + \log(m))$.
  This is followed by computing $(m/g)\tilde\alpha$ which has complexity in $\OT(\bs(\alpha) + d \log(m))$ and dominates the computation.
  \par
  (2): Let $(a_1,\dotsc,a_d)$ be the coefficient vector of $\tilde\alpha$ and $g = \operatorname{GCD}(m,a_1,\dotsc,a_d)$.
  The quotient $\alpha/m$ is then given by $(\tilde\alpha/g)/(k \cdot m/g)$. 
  As the costs of computing $g$ are in $\OT(\log(\lvert m \rvert) + d \log(\|\tilde\alpha\|_\infty))$ and the products can be computed in $\OT(d \log(\|\tilde\alpha\|_\infty))$ and $\OT(\log(\lvert m \rvert) + \log(k))$ the claim follows.
  \par
  (3): The complexity obviously holds for integral elements.
  By (1) the computation of $l\tilde\alpha$ and $k\tilde\alpha$ has complexity in $\OT(\bs(\alpha) + \bs(\beta))$ and the complexity of adding $l\tilde\alpha$ and $k\tilde\beta$ is in $\OT(\bs(\alpha)+\bs(\beta))$.
  Computing $kl$ has complexity in $\OT(\bs(\alpha)/d + \bs(\beta)/d)$.
  The last thing we have to do is making sure that the coefficients of the numerator and the denominator are coprime.
  This is done by $d$ GCD computations and $d$ divisions with complexity in $\OT(d(\bs(\alpha)/d + \bs(\beta)/d))$.
\end{proof}

\begin{proposition}\label{cost:elements}
  Let $\alpha,\beta,\alpha_1,\dotsc,\alpha_n \in K$, $\gamma \in \mathcal O_K$ an integral element and $m \in \Z$. We can
  \begin{enumerate}
    \item
     compute the regular representation $M_\gamma$ of $\gamma$ with complexity in $\OT(d^2 \bs(\gamma) + d^2 C)$.
    \item
      compute the product $\alpha \beta$ with complexity in $\OT(d \bs(\alpha)+d\bs(\beta) + d C)$ if the regular representation of the numerator of $\alpha$ is known.
    \item
      compute the product $\alpha \beta$ with complexity in $\OT(d^2 \bs(\alpha) + d\bs(\beta) + d^2 C)$
    \item
      compute the products $\alpha\alpha_i$, $1 \leq i \leq n$, with complexity in $\OT(d(d+n)\bs(\alpha) + dn\max_i\bs(\alpha_i) + d(d+n) C)$.
    \item
      compute the inverse $\alpha^{-1}$ with complexity in $\OT(d^2 \bs(\alpha) + d^2 C)$ if $\alpha \neq 0$.
  \end{enumerate}
\end{proposition}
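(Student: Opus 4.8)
\emph{Proof plan.} The plan is to reduce every operation to integer linear algebra, working with coefficient vectors relative to $\Omega$ and the regular representation, carrying rational denominators along separately and clearing them only at the very end of each operation. Throughout I will use the elementary consequences $\log(C_3)\le C/d$ and $\log d\le C/d$ of the definition of $C$ to absorb the constants that arise. The one preliminary computation I single out is that the defining relation $\omega_i\omega_j=\sum_k m_{i,j}^k\omega_k$ gives, for $\gamma=\sum_i c_i\omega_i\in\OK$,
\[ (M_\gamma)_{j,k}=\sum_{i=1}^d c_i\, m_{i,j}^k, \qquad\text{hence}\qquad \log\lvert M_\gamma\rvert \le \tfrac1d\bs(\gamma)+\tfrac1d C+\log d; \]
this bound will be reused repeatedly.

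For (1), the displayed formula exhibits $M_\gamma$ as $d^2$ entries, each an inner product of length $d$ between integers of bit size $O(\bs(\gamma)/d+C/d)$; carrying out the $d^3$ integer multiplications and additions costs $\OT(d^2\bs(\gamma)+d^2C)$. For (2), write $\alpha=\tilde\alpha/k$ and $\beta=\tilde\beta/l$; since elements are row vectors, the coefficient vector of $\tilde\alpha\tilde\beta$ is obtained by one vector--matrix product of the coefficient vector of $\tilde\beta$ with $M_{\tilde\alpha}$, i.e.\ $d$ inner products of length $d$ whose two families of entries have bit size $O(\bs(\alpha)/d+C/d)$ and $O(\bs(\beta)/d)$, hence computable in $\OT(d\bs(\alpha)+d\bs(\beta)+dC)$ (using $\bs(\tilde\alpha)\le\bs(\alpha)$). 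Forming $kl$ and putting $\tilde\alpha\tilde\beta/(kl)$ in lowest terms — one GCD and one division per coordinate, on integers of bit size $\OT(\bs(\alpha)/d+\bs(\beta)/d+C/d)$ — stays within this bound. Part (3) then follows by composing: compute $M_{\tilde\alpha}$ via (1) in $\OT(d^2\bs(\alpha)+d^2C)$ and apply (2).

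For (4), I would compute $M_{\tilde\alpha}$ once, in $\OT(d^2\bs(\alpha)+d^2C)$, and invoke (2) for each of the $n$ products $\alpha\alpha_i$ at cost $\OT(d\bs(\alpha)+d\bs(\alpha_i)+dC)$ each; summing over $i$ and bounding $\sum_i\bs(\alpha_i)\le n\max_i\bs(\alpha_i)$ gives the claim. For (5), write $\alpha=\tilde\alpha/k$, so $\alpha^{-1}=k\,\tilde\alpha^{-1}$ and it is enough to invert $\tilde\alpha\in\OK$: its coefficient vector $x$ is the unique rational solution of $x\,M_{\tilde\alpha}=e_1$, where $e_1$ is the coefficient vector of $\omega_1=1$ and $M_{\tilde\alpha}$ is nonsingular since $\tilde\alpha\neq 0$ in $K$. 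After building $M_{\tilde\alpha}$ as in (1), Dixon's algorithm applied to the transposed system $M_{\tilde\alpha}^{\mathsf T}x^{\mathsf T}=e_1^{\mathsf T}$ solves it in $\OT(d^3(\log\lvert M_{\tilde\alpha}\rvert+\log\lvert e_1\rvert))=\OT(d^2\bs(\alpha)+d^2C)$; clearing the common denominator of $x$ (an LCM of $d$ integers), reducing to lowest terms, and multiplying back by $k$ all stay within the bound, since Proposition~\ref{comp:prop1}(3) caps the size of the answer by $d\bs(\alpha)+C$.

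The only genuinely delicate point is the denominator bookkeeping in (2) and (5): one must check that forming products and LCMs of denominators and re-reducing fractions to lowest terms never exceeds the stated complexity, and verify once and for all the bound on $\lvert M_{\tilde\alpha}\rvert$ in terms of $\bs(\alpha)$ and $C$. With $\log(C_3)\le C/d$ and $\log d\le C/d$ in hand these are routine, so I expect no real obstacle beyond careful accounting.
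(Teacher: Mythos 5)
Your proposal is correct and follows essentially the same route as the paper: build $M_{\tilde\alpha}$ from the structure constants, realize multiplication as a vector--matrix product, and invert by solving $xM_{\tilde\alpha}=(1,0,\dotsc,0)$ with Dixon's algorithm, clearing denominators at the end. In fact you are slightly more complete, since the paper's proof silently skips item (4) (its labelled ``(4)'' is the inversion of item (5)); your treatment of (4) --- compute $M_{\tilde\alpha}$ once and reuse it for the $n$ products --- is exactly the intended argument.
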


\begin{proof}
  Let us write $\alpha = \tilde\alpha/k$ and $\beta = \tilde \beta /l$ with $k$ and $l$ the denominator of $\alpha$ and $\beta$ respectively.
  \par
  (1): If $(c_1,\dotsc,c_d) \in \Z^{d}$ denotes the coefficient vector of $\gamma$, the regular representation is given by
  \[ M_\gamma = \Biggl( \sum_{j=1}^d c_j m_{ij}^k\Biggr)_{i,k}. \]
  Thus computing $M_\gamma$ involves $d^3$ multiplications (and additions) and the overall complexity is in $\tilde O(d^2 \bs(\gamma) + d^3 \log(C_3)) = \OT(d^2 \bs(\gamma) + d^2 C)$.
  \par
  (2): Let $(a_1,\dotsc,a_d)$ and $(b_1,\dotsc,b_d)$ be the coefficient vectors of $\tilde\alpha$ and $\tilde\beta$ respectively. 
  The coefficients of the product $\tilde \alpha \tilde \beta = \sum_{i=1}^d c_i \omega_i $ are given by
  \[ (c_1,\dotsc,c_d) = (b_1,\dotsc,b_d) M_{\tilde\alpha}. \]
  Hence the product is obtained by $d^2$ multiplications.
  As the matrix $M_{\tilde\alpha}$ satisfies $\log(\lvert M_{\tilde\alpha} \rvert)\in 
\tilde O\left(\bs(\tilde\alpha)/d + {C}/{d}\right)$ this has complexity in $\OT (d \bs(\tilde\alpha) + d \bs(\tilde\beta) + dC)$.
  Since taking care of denominators is less expensive this step dominates the computation.
  \par
  (3):
  Use (1) and (2).
  \par
  (4):
  We fist evaluate the complexity of inverting the integral element $\tilde \alpha$.
  In this case the coefficients $b_1,\dotsc,b_n$ of the element $\delta \in K$ with $\tilde \alpha \delta = 1$ satisfy
  \[ (b_1,\dotsc,b_d) M_{\tilde\alpha} = (1,0,\dotsc,0).\]
  Thus inverting $\tilde\alpha$ boils down to calculating the regular representation of $\tilde\alpha$ and finding the unique rational solution of a linear system of $d$ integer equations.
  By (1) the computation of $M_{\tilde\alpha}$ has complexity in $\OT(d^2 \bs(\tilde\alpha) + d^2 C)$ and the entries of $M_{\tilde\alpha}$ satisfy $\log (\lvert M_{\tilde\alpha} \rvert) \in O\left({\bs(\tilde\alpha)}/{d} + {C}/{d}\right)$.
  Using Dixon's algorithm solving the system then has complexity in $\OT(d^2 \bs(\alpha) + d^2 C)$.
  Now the inverse of $\alpha$ is given by $\alpha^{-1} = k \tilde\alpha^{-1}$.
  Since $\bs(\tilde\alpha^{-1}) \leq d\bs(\alpha)+ C$ the complexity to compute $k\tilde\alpha^{-1}$ is in $\OT(d \bs(\alpha)+ C)$.
\end{proof}

\subsection*{Ideal arithmetic}

By definition integral ideals are represented by their unique HNF with respect to the fixed integral basis.
Therefore operations with ideals are mainly HNF computations which are accelerated by the availability of a multiple of the corresponding largest elementary divisor.
More precisely let $\mathfrak a$ be an integral ideal with HNF $A \in \Z^{d \times d}$.
As $\min(\mathfrak a)$ is an element of $\mathfrak a$ we know that $\min(\mathfrak a)\omega_i \in \mathfrak a$ for all $1 \leq i \leq d$.
On the side of the $\Z$-module structure this implies $\min(\mathfrak a)\Z^d \subseteq [A]_\Z$ allowing us to work modulo $\min(\mathfrak a)^2$ during the HNF computation by Lemma~\ref{lem:claus}.
The following lemma for computing the sum of ideals illustrates these ideas.

\begin{lemma}\label{cost:idealsum}
  Let $\mathfrak a$ and $\mathfrak b$ be fractional ideals and $m \in \Z$. We can
  \begin{enumerate}
    \item
      compute $m\mathfrak a$ with complexity in $\OT(\bs(\mathfrak a) + d^2 \log(\lvert m \rvert))$.
    \item
      compute the sum $\mathfrak a + \mathfrak b$ with complexity in $\OT(d (\bs(\mathfrak a)+\bs(\mathfrak b)))$.
  \end{enumerate}
\end{lemma}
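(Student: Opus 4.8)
The plan is to reduce both claims to integer HNF computations whose cost is controlled by Corollary~\ref{cost:zhnf}, after taking care of the denominators by Proposition~\ref{comp:prop1}~(1) and Proposition~\ref{size:ideals}. For part~(1), write $\mathfrak a = \tilde{\mathfrak a}/k$ with $k = \operatorname{den}(\mathfrak a)$ and let $A \in \Z^{d\times d}$ be the HNF basis of $\tilde{\mathfrak a}$. Then $m\mathfrak a = (m\tilde{\mathfrak a})/k$ (before cancelling common factors), and $m\tilde{\mathfrak a}$ has basis matrix $mA$. First I would compute $g = \operatorname{GCD}(m,k)$ and cancel it, so the new numerator is $(m/g)\tilde{\mathfrak a}$ with denominator $k/g$; this GCD and the two divisions cost $\OT(\log\lvert m\rvert + \bs(\mathfrak a)/d^2)$. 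Multiplying the $d^2$ entries of $A$ by $m/g$ costs $\OT(d^2(\log\lvert m\rvert + \log\lvert A\rvert)) = \OT(\log\lvert m\rvert + \bs(\mathfrak a) + d^2\log\lvert m\rvert)$; finally re-Hermitizing $(m/g)A$ can again be done via Corollary~\ref{cost:zhnf} with $\lambda = (m/g)\min(\tilde{\mathfrak a})$, giving a term in $\OT(d^3(\log\lvert m\rvert + \log\min(\tilde{\mathfrak a})))$, which is absorbed into $\OT(\bs(\mathfrak a) + d^2\log\lvert m\rvert)$ since $d^3\log\min(\tilde{\mathfrak a}) = d\,\bs(\tilde{\mathfrak a})$ — wait, that is an extra factor $d$, so more carefully one observes that $mA$ is \emph{already} in HNF up to reordering/reduction of the off-diagonal entries, so only a reduction modulo the (scaled) diagonal is needed, costing $\OT(d^2\log(\lvert m\rvert\min(\tilde{\mathfrak a})))=\OT(\bs(\mathfrak a)+d^2\log\lvert m\rvert)$.

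For part~(2), again write $\mathfrak a = \tilde{\mathfrak a}/k$, $\mathfrak b = \tilde{\mathfrak b}/l$, so that $\mathfrak a + \mathfrak b = (l\tilde{\mathfrak a} + k\tilde{\mathfrak b})/(kl)$. I would form the $2d\times d$ matrix stacking the basis of $l\tilde{\mathfrak a}$ on top of the basis of $k\tilde{\mathfrak b}$; by part~(1) the entries have logarithmic size $\OT(\bs(\mathfrak a)/d^2 + \bs(\mathfrak b)/d^2 + \log l + \log k)$, i.e.\ contributing $\OT(\bs(\mathfrak a)+\bs(\mathfrak b))$ after multiplying by $d^2$. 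Now apply Corollary~\ref{cost:zhnf} with $n = 2d$, $m = d$: we need a multiple $\lambda$ of the largest elementary divisor of the stacked matrix, i.e.\ with $\lambda\Z^d$ contained in the row span. Here I take $\lambda = l\min(\tilde{\mathfrak a})$ (or $k\min(\tilde{\mathfrak b})$, whichever is smaller), since $\min(l\tilde{\mathfrak a})\Z^d = l\min(\tilde{\mathfrak a})\Z^d \subseteq l\tilde{\mathfrak a} \subseteq [l\tilde{\mathfrak a}+k\tilde{\mathfrak b}]_\Z$. Then $\log\lvert\lambda\rvert \le \log l + \log\min(\tilde{\mathfrak a}) = \OT(\bs(\mathfrak a)/d^2)$ (using $\log l \le \bs(\mathfrak a)/d^2$ and $d^2\log\min(\tilde{\mathfrak a}) = \bs(\tilde{\mathfrak a})$), so Corollary~\ref{cost:zhnf} gives cost
\[ \OT\bigl(d\cdot 2d\cdot \log\lvert A\rvert + d^2\cdot 2d\cdot \log\lvert\lambda\rvert\bigr) = \OT\bigl(d(\bs(\mathfrak a)+\bs(\mathfrak b))\bigr), \]
where the $d^2\cdot d\cdot\log\lambda$ term is $\OT(d^3\cdot \bs(\mathfrak a)/d^2) = \OT(d\,\bs(\mathfrak a))$. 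The final division to put $\mathfrak a+\mathfrak b$ in reduced fractional form (cancelling $\gcd$ of the numerator entries with $kl$) costs $\OT(\bs(\mathfrak a)+\bs(\mathfrak b))$ by $d$ GCD/division steps, and is dominated.

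The main obstacle I anticipate is bookkeeping around the elementary-divisor multiple $\lambda$: one must verify that $\log\lvert\lambda\rvert$ is genuinely $O(\bs(\cdot)/d^2)$ rather than $O(\bs(\cdot)/d)$ or $O(\bs(\cdot))$, because the factor $d^2\max(m,n)\log\lvert\lambda\rvert$ in Corollary~\ref{cost:zhnf} would otherwise blow the bound up by a factor of $d$ or $d^2$. This is exactly where Proposition~\ref{size:minimum}~(4) and the defining relation $\bs(\tilde{\mathfrak a}) = d^2\log\min(\tilde{\mathfrak a})$ are used: $\min$ of an integral ideal, not its norm, is the right quantity, and the weight $d^2$ built into $\bs$ on ideals (and the weight $d^2\log\lvert m\rvert$ permitted in part~(1)) is precisely calibrated so that $d^2\cdot d\cdot\log\min(\tilde{\mathfrak a})$ equals $d\,\bs(\tilde{\mathfrak a})$. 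A secondary point is ensuring that the denominators $l$ introduced by scaling $\tilde{\mathfrak a}$ by $l$ do not themselves inflate $\lambda$ beyond $\bs(\mathfrak a)/d^2$, which follows since $\log l \le \bs(\mathfrak a)/d^2$ by the definition of $\bs$ on fractional ideals. Everything else is routine application of the integer-arithmetic complexities recorded at the start of this section.
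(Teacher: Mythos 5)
Your proposal is correct and follows essentially the same route as the paper: handle denominators separately, reduce to an integer HNF of the stacked basis matrices, and control the cost via Corollary~\ref{cost:zhnf} using the minimum of the (scaled) integral ideal as the elementary-divisor multiple $\lambda$, with the weight $d^{2}$ in $\bs$ absorbing the $d^{3}\log\lambda$ term. Two cosmetic points: in part~(1) the scaled HNF matrix $\lvert m/g\rvert A$ is already in Hermite form (the reduced off-diagonal residues scale with the diagonal), so no re-reduction is needed at all; and in part~(2) the bound $\log l \leq \bs(\mathfrak b)/d^{2}$ (not $\bs(\mathfrak a)/d^{2}$, since $l$ is the denominator of $\mathfrak b$) is what you want, though this does not affect the final estimate.
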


\begin{proof}
  We write $\mathfrak a = \tilde{\mathfrak a}/k$ and $\mathfrak b = \tilde{\mathfrak b}/l$ with $k$ and $l$ the denominator of $\mathfrak a$ and $\mathfrak b$ respectively.
  \par
  (1): We first have to compute the GCD $g$ of $m$ and $k$.
  Together with the division of $k$ and $m$ by $g$ this has complexity in $\OT(\log(\lvert m \rvert) + \log(k))$.
  Finally we have to multiply the HNF matrix of $\tilde{\mathfrak a}$ with $m/g$ taking $d^2$ multiplications with integers of size bounded by $\bs(\tilde{\mathfrak a})/d^2 + \log(\lvert m \rvert)$.
  In total we obtain a complexity in $\OT(\bs(\mathfrak a) + d^2 \log(\lvert m \rvert))$.
  \par
  (2):
  We first consider the case of integral ideals $\tilde{\mathfrak a}$ and $\tilde{\mathfrak b}$.
  The HNF basis of $\tilde{\mathfrak a}+\tilde{\mathfrak b}$ is obtained by computing the HNF of the concatenation $(M_{\tilde{\mathfrak a}}^t | M_{\tilde{\mathfrak b}}^t)^t$.
  As the minimum of $\tilde{\mathfrak a} +\tilde{\mathfrak b}$ divides $\operatorname{GCD}(\min(\tilde{\mathfrak a}),\min(\tilde{\mathfrak b}))$ by Corollary~\ref{cost:zhnf} this computation can be done with complexity in
  \begin{align*}   & \OT((2d ) d(\log(\min(\tilde{\mathfrak a})) + \log(\min(\tilde{\mathfrak b}))) + (2d)d^2 \log(\operatorname{GCD}(\min(\tilde{\mathfrak a}),\min(\tilde{\mathfrak b})))) \\ 
    \subseteq  & \OT(d\min(\bs(\tilde{\mathfrak a}),\bs(\tilde{\mathfrak b}))).\end{align*}
  Now consider the fractional case.
  By (1) and the integral case computing $l\tilde{\mathfrak a} + k\tilde{\mathfrak b}$ has complexity in $\OT(d(\bs(\mathfrak a) + \bs(\mathfrak b)))$.
  Since this dominates the denominator computation we obtain an overall complexity as claimed.
\end{proof}

\begin{proposition}\label{cost:idealrest}
  Let $\alpha \in K$ and $\mathfrak a,\mathfrak b \subseteq \mathcal O_K$ be integral ideals. We can
  \begin{enumerate}
    \item
      compute $\mathfrak a \mathfrak b$ with complexity in $\OT(d^2 \bs(\mathfrak a) + d^2 \bs(\mathfrak b) + d^3 C)$.
    \item
      compute $\alpha \mathfrak a$ with complexity in $\OT(d^3 \bs(\alpha) + d \bs(\mathfrak a) + d^2 C)$.
 \end{enumerate}
\end{proposition}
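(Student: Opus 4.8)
The plan is to reduce each of the two tasks to (a) a small number of element products and (b) one Hermite normal form computation over $\Z$, and to run the latter through Corollary~\ref{cost:zhnf} using the \emph{minimum} of the output ideal as the modulus $\lambda$; this is precisely where the ability to exhibit a small multiple of the largest elementary divisor saves us from coefficient swell. Throughout I use that for an integral ideal $\mathfrak c$ the HNF basis matrix satisfies $|M_{\mathfrak c}| = \min(\mathfrak c)$, so each of its rows, read as an element of $\OK$, has size at most $\bs(\mathfrak c)/d$, and that the minimum of an integral ideal lies in that ideal, hence $\min(\mathfrak c)\Z^d$ is contained in the $\Z$-span of the HNF rows.

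For (1), I first write $\mathfrak a = \tilde{\mathfrak a}/k$, $\mathfrak b = \tilde{\mathfrak b}/l$. Since $\mathfrak a\mathfrak b = (\tilde{\mathfrak a}\tilde{\mathfrak b})/(kl)$ and putting this fraction into lowest terms costs only $O(d^2)$ integer gcd's and divisions on integers of bit size $\OT(\bs(\mathfrak a)/d^2 + \bs(\mathfrak b)/d^2)$, I may assume $\mathfrak a,\mathfrak b\subseteq\OK$. Let $a_1,\dotsc,a_d$ and $b_1,\dotsc,b_d\in\OK$ be the rows of $M_{\mathfrak a}, M_{\mathfrak b}$, so $\bs(a_i)\le\bs(\mathfrak a)/d$ and $\bs(b_j)\le\bs(\mathfrak b)/d$. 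Then $\mathfrak a\mathfrak b$ is the $\Z$-span of the $d^2$ products $a_ib_j$, which I compute with one call to Proposition~\ref{cost:elements}(4) per index $i$ (taking $\alpha=a_i$ and the "$n$" there equal to $d$), for a total cost of $\OT(d^2\bs(\mathfrak a)+d^2\bs(\mathfrak b)+d^3C)$. Stacking the coefficient vectors of the $a_ib_j$ produces $B\in\Z^{d^2\times d}$ of rank $d$; by Proposition~\ref{comp:prop1}(2) each entry has bit size at most $\bs(a_ib_j)/d\le\bs(\mathfrak a)/d^2+\bs(\mathfrak b)/d^2+C/d$, so $\log|B|\in\OT(\bs(\mathfrak a)/d^2+\bs(\mathfrak b)/d^2+C/d)$, and by Proposition~\ref{size:minimum}(2) the integer $\lambda=\min(\mathfrak a)\min(\mathfrak b)$, of bit size $\bs(\mathfrak a)/d^2+\bs(\mathfrak b)/d^2$, is a multiple of $\min(\mathfrak a\mathfrak b)\in\mathfrak a\mathfrak b=[B]_\Z$, whence $\lambda\Z^d\subseteq[B]_\Z$. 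Corollary~\ref{cost:zhnf} with $n=d^2$, $m=d$ then computes $\HNF(B)$ in $\OT(d^3\log|B|+d^4\log\lambda)=\OT(d^2\bs(\mathfrak a)+d^2\bs(\mathfrak b)+d^2C)$, and summing the contributions gives the bound in (1).

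For (2), I write $\alpha=\tilde\alpha/k$ with $\tilde\alpha\in\OK$; since $\alpha\mathfrak a=(\tilde\alpha\mathfrak a)/k$ and the denominator reduction is again cheap, it suffices to compute the integral ideal $\tilde\alpha\mathfrak a$. With $a_1,\dotsc,a_d$ the HNF rows of $\mathfrak a$, the $\Z$-module $\tilde\alpha\mathfrak a$ is spanned by $\tilde\alpha a_1,\dotsc,\tilde\alpha a_d$, obtained by a single application of Proposition~\ref{cost:elements}(4) ($\alpha=\tilde\alpha$, $n=d$) at cost $\OT(d^2\bs(\tilde\alpha)+d\bs(\mathfrak a)+d^2C)$ (using $\bs(a_i)\le\bs(\mathfrak a)/d$). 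Their coefficient vectors form $B\in\Z^{d\times d}$ of rank $d$ with $\log|B|\in\OT(\bs(\tilde\alpha)/d+\bs(\mathfrak a)/d^2+C/d)$, and here I take $\lambda=\min(\tilde\alpha\mathfrak a)\in\tilde\alpha\mathfrak a=[B]_\Z$, so $\lambda\Z^d\subseteq[B]_\Z$; by Proposition~\ref{size:elementideal} its bit size is $\log\lambda=\bs(\tilde\alpha\mathfrak a)/d^2\le\bs(\tilde\alpha)+\bs(\mathfrak a)/d^2+C/d$. Corollary~\ref{cost:zhnf} with $n=m=d$ then computes $\HNF(B)$ in $\OT(d^2\log|B|+d^3\log\lambda)=\OT(d^3\bs(\tilde\alpha)+d\bs(\mathfrak a)+d^2C)$; adding the product step and using $\bs(\tilde\alpha)\le\bs(\alpha)$ yields (2).

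The only real obstacle is the size bookkeeping: one must consistently translate between $\bs(\cdot)$ — which carries weight $d$ on field elements and weight $d^2$ on ideals — and the bare bit sizes $\log|B|$, $\log\lambda$ that enter Corollary~\ref{cost:zhnf}, so that the advertised powers of $d$ come out correctly, and one must use the \emph{minimum} of the product ideal as $\lambda$; replacing it by the norm (the determinant of $B$) would inflate the $\log\lambda$ terms by roughly a factor $d$ and break the stated complexities. Beyond that, the argument is routine given Propositions~\ref{comp:prop1}, \ref{size:minimum}, \ref{size:elementideal}, \ref{cost:elements} and Corollary~\ref{cost:zhnf}.
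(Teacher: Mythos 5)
Your route is the same as the paper's: express the product as a $\Z$-generating system, bound the element products via Proposition~\ref{cost:elements}(4), and run the resulting integer matrix through Corollary~\ref{cost:zhnf} with a modulus $\lambda$ in place of the full determinant. Part~(1) is correct and in fact uses exactly the paper's choice $\lambda = \min(\mathfrak a)\min(\mathfrak b)$, which is available from the HNF bases of $\mathfrak a$ and $\mathfrak b$ before the computation begins.

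In part~(2), however, there is a genuine gap: you take $\lambda = \min(\tilde\alpha\mathfrak a)$, which is precisely the $(1,1)$-entry of the HNF you are about to compute and hence is \emph{not} known when you need to invoke Corollary~\ref{cost:zhnf}. The Howell/HNF computation in that corollary requires an explicit integer modulus to set up the arithmetic in $\Z/\lambda^2\Z$; having only an a~posteriori upper bound on its bit size is not enough. The fix, which is what the paper does, is to use a computable multiple: $\min\bigl((\tilde\alpha)\bigr)\min(\tilde{\mathfrak a})$, or more simply $\lvert\inorm{\tilde\alpha}\rvert\cdot\min(\tilde{\mathfrak a})$, which is read off directly from $\det(M_{\tilde\alpha})$ and the HNF of $\tilde{\mathfrak a}$ (the regular representation $M_{\tilde\alpha}$ is already being formed for the multiplications). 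The size bound $\log\lambda \in \OT(\bs(\tilde\alpha) + \bs(\mathfrak a)/d^2 + C/d)$ is unchanged under this substitution, so the stated complexity survives; only the justification of which modulus to feed to the algorithm needs correcting.
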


\begin{proof}
  We write $\mathfrak a = \tilde{\mathfrak a}/k$, $\mathfrak b = \tilde{\mathfrak b}/l$ and $\alpha = \tilde\alpha/m$ with $k$, $l$ and $m$ the denominator of $\mathfrak a$, $\mathfrak b$ and $\alpha$ respectively.
  \par
  (1): As $\mathfrak a \mathfrak b = \tilde{\mathfrak a}\tilde{\mathfrak b}/(kl)$ we first evaluate the complexity of computing $\tilde{\mathfrak a}\tilde{\mathfrak b}$.
  Denoting by $(\alpha_i)_i$ and $(\beta_j)_j$ the HNF bases of $\tilde{\mathfrak a}$ and $\tilde{\mathfrak b}$ respectively we know that $\bs(\alpha_i) \leq \bs(\tilde{\mathfrak a})/d$ and $\bs(\beta_i) \leq \bs(\tilde{\mathfrak b})/d$ respectively.
  The $d^2$ elements $(\alpha_i \beta_j)_{i,j}$ form a $\Z$-generating system of $\tilde{\mathfrak a}\tilde{\mathfrak b}$ and their computation has complexity in
  \[ \OT(d^3(\bs(\tilde{\mathfrak a})/d + \bs(\tilde{\mathfrak b}/d) + d^3 C)) = \OT(d^2\bs(\tilde{\mathfrak a}) + d^2 \bs(\tilde{\mathfrak b}) + d^3 C). \]
  The matrix $M$ of this generating system then satisfies $\log(\lvert M \rvert)\leq \bs(\tilde{\mathfrak a})/ d^2 + \bs(\tilde{\mathfrak b})/d^2 + C/d$.
  As the minimum of $\tilde{\mathfrak a}\tilde{\mathfrak b}$ divides $\min(\tilde{\mathfrak a})\min(\tilde{\mathfrak b})$ the final HNF computation has complexity in 
  \[ \OT(d^2 \bs(\tilde{\mathfrak a}) + d^2 \bs(\tilde{\mathfrak b}) + d^2 C). \]
  As denominator computation is dominated by these steps the claim holds.
  \par
  (2): If we denote the HNF basis of $\tilde{\mathfrak a}$ by $(\alpha_i)_i$ we know that $(\tilde\alpha \alpha_i)_i$ forms a $\Z$-generating system of the ideal $\tilde\alpha \tilde{\mathfrak a}$.
  Computing the $d$ products $\tilde\alpha \alpha_i$ for $1 \leq i \leq d$ has complexity in $\OT(d^2 \bs(\tilde\alpha) + d \bs(\tilde{\mathfrak a}) + d^2 C)$ since we have to compute the regular representation of $\tilde\alpha$ only once.
  If $M$ denotes the matrix corresponding to this generating system of $\tilde\alpha\tilde{\mathfrak a}$ we know that $\log( \lvert M \rvert) \leq \bs(\tilde\alpha)/d + \bs(\tilde{\mathfrak a})/d^2 + C/d$.
  Before computing the HNF matrix, we take care of the denominator.
  Computing $kl$, the GCD of $kl$ and the entries of the matrix $M$ and dividing $kl$ and $M$ by the GCD has complexity in $\OT(d\bs(\alpha) + \bs(\mathfrak a) + dC)$.
  As we know the regular representation of $\tilde\alpha$ we also know the minimum of the principal ideal $(\alpha)$.
  In particular we know $\min((\tilde\alpha))\min(\tilde{\mathfrak a})$ which is a multiple of $\min(\tilde{\alpha}\tilde{\mathfrak a})$.
  Using the estimate $\bs((\tilde\alpha)) \leq d^2 \bs(\tilde\alpha) + dC$ (see proof of Proposition~\ref{size:elementideal}) and Corollary~\ref{cost:zhnf} the final HNF can be computed with complexity in 
  \[ \OT(d \bs(\mathfrak a) + d^3 \bs(\alpha) + d^2C). \qedhere\]
\end{proof}

Finally we need to invert ideals.
We use a slightly modified version of \cite[Algorithm 5.3]{Belabas2004} (which itself is a modified version of \cite[Algorithm 4.8.21]{Cohen1993}), exploiting the fact that
\[ \mathfrak a^{-1} = \left\{ \alpha \in K \,\middle| \,\operatorname{Tr}(\alpha \mathfrak D^{-1}\mathfrak a) \subseteq \Z \right\}, \]
where $\mathfrak D$ denotes the different of $K$. 
Recall that $\mathfrak D^{-1}$ is a fractional ideal with (fractional) basis matrix $T^{-1} \in \Q^{d\times d}$, where $T = (\operatorname{Tr}(\omega_i\omega_j))_{i,j}$.
In order to evaluate the complexity of ideal inversion we need a bound on the size of $m T^{-1}$ where $m$ denotes the denominator of $T^{-1}$, that is, $m = \min(\mathfrak D)$.
Since by Cramer's rule we know that $\lvert m T^{-1} \rvert \leq d^d \lvert T \rvert^d$ it remains to consider $\lvert T \rvert$.
By definition the trace of an element $\alpha \in K$ is given by the trace of its regular representation, $\operatorname{Tr}(\alpha) = \operatorname{Tr}(M_{\alpha})$.
In case of a basis element $\alpha = \omega_k$ for some $1 \leq k \leq d$ the entries of $M_{\alpha}$ are just structure constants $m_{ij}^k$ and therefore $\lvert \operatorname {Tr} (\omega_k)\rvert \leq d C_3$.
Applying this to $\operatorname{Tr}(\omega_i\omega_j)$ for $1 \leq i,j\leq d$ yields
\[ \lvert \operatorname{Tr}(\omega_i\omega_j) \rvert \leq \sum_{k=1}^d \lvert m_{ij}^k \rvert \lvert \operatorname{Tr}(\omega_k) \rvert \leq d^2 C_3^2 \]
and therefore
\[ \log(\lvert mT^{-1}\rvert) \leq 2d \log(d) + 2d \log(C_3) \in O(C) .\]
In addition note that $\min(\mathfrak D)$ divides the norm of $\mathfrak D$, which is just $\lvert \Delta_K \rvert$.

\begin{proposition}\label{cost:idealinv}
  Let $\mathfrak a$ be a fractional ideal.
  Then we can compute $\mathfrak a^{-1}$ with complexity in $\OT(d \bs(\mathfrak a) + d^3 \log( \left|\Delta_K \right|) + d^2 C)$. 
\end{proposition}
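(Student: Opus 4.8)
The plan is to reduce to an integral ideal and then to turn the trace-dual description $\mathfrak a^{-1}=\{\alpha\in K:\operatorname{Tr}(\alpha\,\mathfrak D^{-1}\mathfrak a)\subseteq\Z\}$ into linear algebra over $\Z$, following a modification of \cite[Algorithm~5.3]{Belabas2004} and making essential use of the precomputed data of Section~\ref{sec:sizecost}; the running time will be governed by a bounded number of Dixon solves of $d\times d$ systems together with one concluding Hermite normal form computation with a small modulus.

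I would first clear the denominator: writing $\mathfrak a=\tilde{\mathfrak a}/k$ with $\tilde{\mathfrak a}\subseteq\OK$ and $k=\operatorname{den}(\mathfrak a)$, we have $\mathfrak a^{-1}=k\,\tilde{\mathfrak a}^{-1}$; by Lemma~\ref{cost:idealsum}~(1) the scaling by $k$ costs $\OT(\bs(\mathfrak a))$, and by Proposition~\ref{size:ideals}~(4) we have $\bs(\tilde{\mathfrak a}^{-1})\le2\bs(\tilde{\mathfrak a})\le2\bs(\mathfrak a)$, so it suffices to invert an integral ideal $\mathfrak a\subseteq\OK$ with $\HNF$ basis matrix $A\in\Z^{d\times d}$. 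Recall $\log\min(\mathfrak a)=\bs(\mathfrak a)/d^{2}$ and, by Proposition~\ref{size:minimum}~(3), $\operatorname{den}(\mathfrak a^{-1})=\min(\mathfrak a)$, so that $\mathfrak e:=\min(\mathfrak a)\,\mathfrak a^{-1}$ is an integral ideal containing $\min(\mathfrak a)\OK$, with $\bs(\mathfrak e)\le\bs(\mathfrak a)$.

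Next I would compute a $\Z$-basis of the fractional ideal $\mathfrak c:=\mathfrak D^{-1}\mathfrak a=\tfrac1D\,\mathfrak B\mathfrak a$, where $\mathfrak B$ is the integral ideal generated by the rows of the precomputed matrix $DT^{-1}$ and $D=\min(\mathfrak D)$ divides $\Delta_K$. The precomputed LLL-reduced two-element representation $\mathfrak B=\delta_1\OK+\delta_2\OK$ together with the precomputed regular representations $M_{\delta_1},M_{\delta_2}$ is what keeps this affordable: $\mathfrak B\mathfrak a=\delta_1\mathfrak a+\delta_2\mathfrak a$ is produced by two element–ideal products (Proposition~\ref{cost:idealrest}~(2), the regular-representation step already being available) and one ideal sum (Lemma~\ref{cost:idealsum}~(2)), and since $\min(\mathfrak B)$ divides $D$, the integral ideal $\mathfrak B\mathfrak a$ has a basis matrix $\bar C$ with $\log\lvert\bar C\rvert\le\log\min(\mathfrak B)+\log\min(\mathfrak a)\le\log\lvert\Delta_K\rvert+\bs(\mathfrak a)/d^{2}$. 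Since the trace-dual lattice $\mathfrak c^{\vee}:=\{\beta\in K:\operatorname{Tr}(\beta\mathfrak c)\subseteq\Z\}$ equals $\mathfrak a^{-1}$ and the trace form has Gram matrix $T$ with respect to $\Omega$, the $\Omega$-coordinate matrix of a $\Z$-basis of $\mathfrak e=\min(\mathfrak a)\,\mathfrak c^{\vee}$ equals, up to transposition, $\min(\mathfrak a)\,(\bar C^{t})^{-1}(DT^{-1})$, which I would obtain by solving the integral system $\bar C^{t}Y=DT^{-1}$ with Dixon's $p$-adic algorithm and scaling by $\min(\mathfrak a)$. It then remains to put the rows of $\min(\mathfrak a)Y$ into $\HNF$: since $\min(\mathfrak a)\Z^{d}$ lies in their $\Z$-span, Corollary~\ref{cost:zhnf} applies with $\lambda=\min(\mathfrak a)$, i.e.\ $\log\lambda\in O(\bs(\mathfrak a)/d^{2})$, and $\mathfrak a^{-1}=\tfrac1{\min(\mathfrak a)}\mathfrak e$ is read off.

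For the complexity one has to bound the entry sizes of all intermediate matrices: $\lvert\bar C\rvert$ as above, $\lvert DT^{-1}\rvert$ in $O(C)$ by the discussion preceding the statement, and (by Cramer's rule) $\log\lvert\min(\mathfrak a)Y\rvert$ in $\OT(\bs(\mathfrak a)/d+d\log\lvert\Delta_K\rvert+C)$, using also $\log\lvert\Delta_K\rvert\gtrsim\log d$. With these bounds the concluding $\HNF$ step costs, by Corollary~\ref{cost:zhnf} with $n,m\in O(d)$, exactly $\OT\!\bigl(d\bs(\mathfrak a)+d^{3}\log\lvert\Delta_K\rvert+d^{2}C\bigr)$; the ideal operations producing $\mathfrak c$ and the Dixon solve are then arranged so as to remain within this budget, and adding the $\OT(\bs(\mathfrak a))$ of the denominator reduction gives the claim. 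I expect the main obstacle to be precisely this accounting — keeping track of every denominator introduced by $\mathfrak D^{-1}$ and by the inversion of $\bar C^{t}$, and of the sizes of the intermediate rational matrices, so as to confirm that the different contributes only through $\min(\mathfrak D)\mid\Delta_K$ (hence a single factor $\log\lvert\Delta_K\rvert$) and that the dependence on the field constant stays at $d^{2}C$; the remaining ingredients (multiplicativity of $\bs$ via Propositions~\ref{size:ideals} and~\ref{size:elementideal}, and the cost estimates of Proposition~\ref{cost:idealrest} and Corollary~\ref{cost:zhnf}) are routine.
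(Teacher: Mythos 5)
Your overall route matches the paper's: reduce to the integral case $\mathfrak a\subseteq\OK$, use the characterization $\mathfrak a^{-1}=\{\alpha\in K:\operatorname{Tr}(\alpha\,\mathfrak D^{-1}\mathfrak a)\subseteq\Z\}$, form the integral ideal $\mathfrak a\mathfrak B$ using the precomputed two-element representation $\mathfrak B=(\delta_1,\delta_2)$ of $\min(\mathfrak D)\mathfrak D^{-1}$, solve a linear system against the precomputed $DT^{-1}$, and finish with an HNF at a small modulus. However, two of your intermediate steps overshoot the claimed budget $\OT(d\,\bs(\mathfrak a)+d^3\log|\Delta_K|+d^2 C)$, and they are precisely the steps whose accounting you explicitly defer.

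First, you form $\mathfrak B\mathfrak a=\delta_1\mathfrak a+\delta_2\mathfrak a$ by two applications of Proposition~\ref{cost:idealrest}~(2) and one sum (Lemma~\ref{cost:idealsum}~(2)). Proposition~\ref{cost:idealrest}~(2) costs $\OT(d^3\bs(\alpha)+d\bs(\mathfrak a)+d^2C)$, and $\bs(\delta_j)\in\OT(\log|\Delta_K|+C)$, so each $\delta_j\mathfrak a$ already costs $\OT(d^3\log|\Delta_K|+d\bs(\mathfrak a)+d^3C)$: the $d^3C$ term blows the budget. The culprit is that the HNF inside that proposition runs modulo $\min((\delta_j))\min(\mathfrak a)$, and $\log\min((\delta_j))$ is only controlled by $\OT(\log|\Delta_K|+C)$, not by $\log|\Delta_K|$ alone. (The same $d^3 C$ reappears in the ideal sum via Proposition~\ref{size:elementideal}.) The paper avoids computing $\delta_1\mathfrak a$ and $\delta_2\mathfrak a$ as separate ideals: it stacks all $2d$ products $\alpha_i\delta_j$ into a single $2d\times d$ integer matrix and does one HNF, for which the relevant modulus is $\min(\mathfrak a\mathfrak B)$, a divisor of $\min(\mathfrak a)|\Delta_K|$; by Corollary~\ref{cost:zhnf} the $C$-dependence of that single HNF is then only $\OT(dC)$.

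Second, you solve $\bar C^{t}Y=DT^{-1}$ with Dixon's algorithm. With $\log|\bar C|\in\OT(\log|\Delta_K|+\bs(\mathfrak a)/d^2)$ and $\log|DT^{-1}|\in O(C)$, even a single Dixon solve costs $\OT(d^3\log|\Delta_K|+d\bs(\mathfrak a)+d^3C)$, and the right-hand side is a full $d\times d$ matrix, so a column-by-column application picks up an extra factor $d$. The paper instead first computes the HNF $H$ of $\mathfrak a\mathfrak B$ and then solves the triangular system $HX=\min(\mathfrak a)(mT^{-1})$ by back-substitution, working modulo $\min(\mathfrak a)^2$; this is legitimate because $\min(\mathfrak a)\Z^d$ lies in the $\Z$-span of $X$ (Lemma~\ref{lem:claus}), and it keeps the solve at $\OT(d\bs(\mathfrak a))$ with the $C$ and $\log|\Delta_K|$ contributions confined to the initial reduction of the right-hand side, i.e.\ $\OT(\bs(\mathfrak a)+d^2\log|\Delta_K|+d^2C)$. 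Your final HNF estimate is fine as written; it is the ideal-product step and the linear solve that must be reorganized as above to reach the stated complexity.
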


\begin{proof}
  We use the same notation as in the preceding discussion.
  Let us first consider the integral case $\mathfrak a \subseteq \mathcal O_K$.
  Recall that the denominator of $\mathfrak a^{-1}$ is just $\min(\mathfrak a)$ and need not be computed.
  Denote by $(\alpha_i)_i$ the HNF basis of $\mathfrak a$ and by $\mathfrak B$ the integral ideal $m \mathfrak D^{-1}$.
  We first have to compute $\mathfrak a\mathfrak B$.
  Using the precomputed $2$-element representation $\mathfrak B = (\delta_1,\delta_2)$ this amounts to compute $2d$ products $\alpha_i \delta_j$, $1 \leq i \leq d$, $1 \leq j \leq 2$.
  As we have also precomputed the regular representation of $\delta_1$ and $\delta_2$ this has complexity in $\OT(d\bs(\mathfrak a) + d^2 \log(\lvert \Delta_K \rvert) + d^2 C)$ and yields a matrix $M \in \Z^{2d\times d}$ with $\log(\lvert M \rvert) \leq \bs(\mathfrak a)/d^2 + \log(\lvert \Delta_K)/d + C/d$.
  The cost of computing the HNF $H$ of $M$ is therefore in $\OT(d \bs(\mathfrak a) + d^3 \log(\lvert \Delta_K\rvert)+  dC)$, where we use that the minimum of $\mathfrak a \mathfrak B$ divides the $\min(\mathfrak a) \lvert \Delta_K \rvert$.
  A transposed basis matrix of the numerator of $\mathfrak a^{-1}$ is then obtained as the solution $X \in \Z^{d\times d}$ of the equation $HX = \min(\mathfrak a)(mT^{-1})$.
  Note that the triangular shape of $H$ allows us the recover $X$ by back substitution. 
  Since $\min(\mathfrak a)^2$ is contained in the span of $X$ we can work modulo $\min(\mathfrak a)^2$. 
  The estimates $\log(\lvert H \rvert)\leq \log(\min(\mathfrak a)\lvert \Delta_K \rvert)$ and $\log(mT^{-1}) \in O(C)$ show that the initial reduction has complexity in $\OT(\bs(\mathfrak a) + d^2 \log(\lvert \Delta_K \rvert) + d^2 C)$.
  For each column of $X$ the back substitution itself then has a complexity in $\OT(d^2 \min(\mathfrak a))$ yielding a complexity of $\OT(d \bs(\mathfrak a))$ in total for obtaining $X$.
  Finally we need to compute the HNF of $X^t$ which has complexity in $\OT(d^2 \log(\lvert X \rvert) + d^3 \log(\min(\mathfrak a))) \subseteq \OT(d\bs(\mathfrak a))$.
  \par
  Now let $\mathfrak a = \tilde{\mathfrak a}/k$ be a fractional ideal with denominator $k$.
  As $\bs(\tilde{\mathfrak a}^{-1}) \leq 2 \bs(\tilde{\mathfrak a})$ the computation of $k \tilde{\mathfrak a}^{-1}$ has complexity in $\OT(\bs(\tilde{\mathfrak a}) + d^2 \log(k)) = \OT(\bs(\mathfrak a))$ and the claim follows.
\end{proof}

%%%%%%%%%%%%%%%%%%%%%%%%%%%%%%%%%%%%%%%%%%%%%%%%%%%%%%%%%%%%%%%%%%%%%%%%%%%%%%%%
%
%  Normalization and reduction
%
%%%%%%%%%%%%%%%%%%%%%%%%%%%%%%%%%%%%%%%%%%%%%%%%%%%%%%%%%%%%%%%%%%%%%%%%%%%%%%%%

\section{Normalization and reduction}

There are different strategies for dealing with coefficient explosion during classical Hermite normal form algorithms over $\Z$.
One strategy, which is used by Hafner and McCurley \cite{Hafner1991} exploits the fact that the whole computation can be done modulo some multiple of the determinant of the associated lattice (in case of a square non-singular matrix this is just the determinant of the matrix).
Fortunately the same holds for the pseudo-HNF over Dedekind domains and therefore we are allowed to use reduction modulo (different) integral ideals involving the determinantal ideal.
Unfortunately these ideals are in general not generated by a single rational integer, making the notion of reduction more difficult.
We will use the approach of Cohen \cite[Algorithm 2.12]{Cohen1996} with a different reduction algorithm and provide a rigorous complexity analysis.
The reduction is accompanied by a normalization algorithm, which bounds the size of the coefficient ideals and heavily depends on lattice reduction.
The output of both algorithms, reduction and normalization, depends on the size of the lattice reduced basis and the smaller the lattice basis the smaller the output.
There are various lattice reduction algorithms and in general the smaller the basis the worse the complexity of the algorithm.
Thus one has to balance between smallness and efficiency.
Instead of the L$^2$ algorithm of Nguyen and Stehl{\'e} \cite{Nguyen2009}, which has complexity quadratic in the size of the input, we rely on the nearly linear \~L$^1$-algorithm of Novocin, Stehl\'{e} and Villard which provides a lattice basis satisfying a weakened LLL condition.
More precisely, for $\Xi = (\delta,\eta,\delta)$ with $\eta \in [\frac 1 2,1)$, $\theta \geq 0$ and $\delta \in (\eta^2,1]$, the notion of an $\Xi$-LLL reduced basis is defined in \cite{Chang2012}.
Setting $\ell = ({\theta\eta + \sqrt{(1 + \theta^2)\delta - \eta^2}}){(\delta - \eta^2)^{-1}}$ it is proved in \cite[Theorem 5.4]{Chang2012} that an $\Xi$-LLL reduced basis $(b_1,\dotsc,b_n)$ of a lattice $L$ of rank $n$ in an Euclidean space satisfies
\begin{equation}\label{eq:LLL}
  \begin{aligned}
  \left\| b_1 \right\| &\leq \ell^{n-1} \lambda(L), \\
  \left\| b_1 \right\| &\leq \ell^{(n-1)/2} \lvert \det(L) \rvert^{1/n}, \\
  \prod_{1 \leq j \leq n} \left\| b_j \right\| &\leq \ell^{n(n-1)/2} \lvert\det(L)\rvert,
\end{aligned}
\end{equation}
where $\det(L)$ resp. $\lambda(L)$ denotes the determinant resp. the first minimum of the lattice $L$.
Using this weakened LLL condition Novocin, Stehl\'{e} and Villard (\cite{Novocin2011}) construct an algorithm, \~L$^1$, with the following property (\cite[Theorem 7]{Novocin2011}):
Given a matrix $B \in \Z^{d\times d}$ with rows $b_1,\dotsc,b_j$ satisfying $\max_j \left\| b_j \right\| \leq 2^\beta$, the $\tilde L^1$ algorithm returns a $\Xi$-LLL reduced basis of the lattice associated to $B$ within $\OT(d^5 \beta)$ operations.

\subsection*{Rounded lattice reduction}

Since the $\tilde L^1$ algorithm operates only on integral input, we now describe how to use this algorithm in our case, where the input is a lattice with real basis.
We closely follow the ideas and arguments of \cite[Section 4]{Belabas2004}, where a similar analysis was done for LLL reduction.
Let $G = (T_2(\omega_i,\omega_j))_{1 \leq i, j \leq d} \in \R^{d \times d}$ be the Gram matrix of $T_2$ with respect to the integral basis of $K$.
Let $R R^t$ be the Cholesky decomposition of $G$ and $e \in \Z_{\geq 1}$ an integer such that the integral matrix $R^{(e)} = \lceil 2^e R \rfloor \in \Z^{d\times d}$ has full rank.
Thus for an element $\alpha = \sum_{i=1}^d a_i \omega_i$ of $K$ with coefficient vector $X = (a_1,\dotsc,a_d)$ we have $\left\|\alpha \right\| = \left\| XR\right\|_2$.
We now set $T_2^{(e)}(\alpha) = \left\| XR^{(e)} \right\|$.
Then $T_2^{(e)}$ is an integral approximation of $2^{2e}T_2$ with integral Gram matrix.
While in general the application of lattice reduction with respect to this
approximated form $T_2^{(e)}$ does not yield a reduced basis with
respect to $T_2$, the basis one obtains satisfies size estimates similar to
(4).

\begin{proposition}
Let $L$ be a sublattice of $\mathcal O_K$ and let $(\alpha_i)_{1\leq i \leq d}$ be a $\Xi$-LLL reduced basis for $L$ with respect to $T_2^{(e)}$.
Then
\begin{align*}
\left\| \alpha_1 \right\| &\leq C_{\Omega, d, e} \cdot \ell^{(d-1)/2} \cdot \lvert \det(L, T_2)\rvert^{1/d} \text{  and} \\
 \prod_{1 \leq i \leq n } \left\| \alpha_i \right\| &\leq C_{\Omega, d, e}^d \cdot \ell^{d(d-1)/2} \cdot \lvert \det(L, T_2)\rvert,
 \end{align*}
with a constant $C_{\Omega, d, e}$ depending on the integral basis $\Omega$, the field degree $d$, $e$, and not depending on $L$, and which satisfies $C_{\Omega, d, e} \rightarrow 1$ for $e \rightarrow \infty$. 
Here with $\det(L, T_2)$ we denote the determinant of $L$ with respect to $T_2$.
\end{proposition}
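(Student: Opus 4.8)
\emph{Proof plan.} The plan is to transfer the size estimates \eqref{eq:LLL} --- which hold for any $\Xi$-LLL reduced basis with respect to the \emph{integral} form $T_2^{(e)}$ --- back to the form $T_2$ by comparing the two forms on $\R^d$ directly. A useful first observation is that only the second and third inequalities of \eqref{eq:LLL} are relevant here (they are the ones in which $\lambda(L)$ does not appear), so there is no need to compare the successive minima of $L$ with respect to $T_2$ and $T_2^{(e)}$; it suffices to control two quantities: how the norm of a single vector changes when passing from $T_2^{(e)}$ to $T_2$, and how the lattice determinant changes.

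For the first comparison I would write $R^{(e)} = \lceil 2^e R \rfloor = 2^e R + E$ with $\lvert E_{i,j}\rvert \le \tfrac12$, hence $\|E\|_{\mathrm{op}} \le \|E\|_{\mathrm F} \le d/2$. Since $R$ is invertible (it comes from the Cholesky decomposition of the positive definite $G$), for every coefficient vector $X$ one has $\|X\|_2 \le \|R^{-1}\|_{\mathrm{op}}\,\|XR\|_2$, and therefore
\[ \bigl\lvert\,\|XR^{(e)}\|_2 - 2^e\|XR\|_2\,\bigr\rvert = \|XE\|_2 \le \tfrac{d}{2}\,\|R^{-1}\|_{\mathrm{op}}\,\|XR\|_2 . \]
Setting $\varepsilon_e = \tfrac{d}{2^{e+1}}\|R^{-1}\|_{\mathrm{op}}$ this gives $(1-\varepsilon_e)\|XR\|_2 \le 2^{-e}\|XR^{(e)}\|_2 \le (1+\varepsilon_e)\|XR\|_2$ with $\varepsilon_e \to 0$ as $e\to\infty$; recalling that $\|\alpha\| = \|XR\|_2$ and $T_2^{(e)}(\alpha) = \|XR^{(e)}\|_2$, this yields $\|\alpha_i\| \le \tfrac{1}{2^e(1-\varepsilon_e)}\,T_2^{(e)}(\alpha_i)$ for each basis vector (once $e$ is large enough that $R^{(e)}$ has full rank and $\varepsilon_e<1$). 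For the second comparison I would use that for a basis matrix $B$ of $L$ one has $\det(L,T_2) = \lvert\det B\rvert\,\lvert\det R\rvert$ and $\det(L,T_2^{(e)}) = \lvert\det B\rvert\,\lvert\det R^{(e)}\rvert$, so from $R^{(e)} = 2^e R\,(I + 2^{-e}R^{-1}E)$ one gets $\det(L, T_2^{(e)}) = 2^{ed}\,\delta_e\,\det(L,T_2)$ with $\delta_e := \lvert\det(I + 2^{-e}R^{-1}E)\rvert \to 1$ as $e\to\infty$.

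It then remains to assemble: apply the determinant parts of \eqref{eq:LLL} to $(\alpha_i)_i$, which is $\Xi$-LLL reduced with respect to $T_2^{(e)}$, so that $T_2^{(e)}(\alpha_1) \le \ell^{(d-1)/2}\,\det(L,T_2^{(e)})^{1/d}$ and $\prod_{i=1}^{d} T_2^{(e)}(\alpha_i) \le \ell^{d(d-1)/2}\,\det(L,T_2^{(e)})$, and substitute the two comparisons above. The powers of $2^e$ (resp. $2^{ed}$) cancel and one lands on exactly the asserted bounds with $C_{\Omega,d,e} = \delta_e^{1/d}/(1-\varepsilon_e)$, which tends to $1$ as $e\to\infty$.

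The step I expect to be the main (conceptual rather than computational) obstacle is ensuring that $C_{\Omega,d,e}$ is genuinely independent of $L$: the rounding error $E$ must be absorbed into a multiplicative constant that does not see the lattice, which is precisely why the first comparison is phrased as a statement about the quadratic forms $T_2$ and $T_2^{(e)}$ on all of $\R^d$ rather than about $L$; with that phrasing both $\varepsilon_e$ and $\delta_e$ depend only on $\Omega$ (through $R$ and $E$), on $d$, and on $e$. A minor technical caveat is that the displayed value of $C_{\Omega,d,e}$ only makes sense once $\varepsilon_e<1$; for the finitely many smaller admissible values of $e$ one may simply enlarge the constant, which does not affect the asserted limit.
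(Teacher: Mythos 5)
Your proposal is correct and follows essentially the same route as the paper: apply the $\Xi$-LLL bounds for the integral form $T_2^{(e)}$, compare the two forms via the rounding matrix $E = R^{(e)} - 2^e R$, and absorb the determinant ratio $\det(R^{(e)})/(2^{ed}\det R)$ into the constant. The only cosmetic difference is that the paper bounds the norm distortion by $(1+\lVert \varepsilon S\rVert_2)$ with $S=(R^{(e)})^{-1}$, citing Belabas, whereas you derive the equivalent factor $1/(1-\varepsilon_e)$ from scratch using $\lVert R^{-1}\rVert_{\mathrm{op}}$; both yield an $L$-independent constant tending to $1$.
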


\begin{proof}
The proof is similar to the proof of \cite[Proposition 4.2]{Belabas2004}.
We set $S = (R^{(e)})^{-1}$ and write $R^{(e)} = 2^e R + \varepsilon$ with $\varepsilon \in \R^{d \times d}$.
Let $X_i$ be the coefficient vector of $\alpha_i$ and let $Y_i = X_iR^{(e)}$.
Since the $(\alpha_i)$ are a $\Xi$-LLL reduced basis of $L$ with respect to $T_2^{(e)}$ we have
\[  \prod_{1 \leq i \leq d} \left\| Y_i \right\|_2 = \prod_{1 \leq i \leq d} \sqrt{T_2^{(e)}(\alpha_i)} \leq \ell^{d(d-1)/2} \lvert \det(L, T_2^{(e)}) \rvert.\]
As in \cite{Belabas2004} we have $2^e \left\| \alpha_i \right\| \leq ( 1 + \left\|\varepsilon S \right\|_2)\left\| Y_i \right\|$.
Using the fact that
\[ \det(L, T_2^{(e)}) = \det(L, T_2) \frac{\det(R^{(e)})}{\det(R)} \]
we obtain
\[ \prod_{1 \leq i \leq d} \left\| \alpha_i \right\| \leq (1 + \left\| \varepsilon S \right\|)^d \frac{1}{2^{de}} \prod_{1 \leq i \leq d} \left\| Y_i \right\|_2 \leq ( 1 + \left\| \varepsilon S \right\|_2)^d \left(\frac{\det(R^{(e)})}{2^{ed}\det(R)}\right) \ell^{d(d-1)/2} \lvert\det(L, T_2)\rvert. \]
Now the claim follows from setting
\[ C_{\Omega, d, e} = ( 1 + \left\|\varepsilon S \right\|_2)\left(\frac{\det(R^{(e)})}{2^{ed}\det(R)}\right)^{1/d} \]
and \cite[Corollary 4.3]{Belabas2004}.
\end{proof}

We call the basis $(\alpha_i)$ as in the statement an approximated $\Xi$-LLL reduced basis of $L$.
To use this in our setting, we add the computation of $\lceil 2^e R \rfloor \in \Z^{d \times d}$ to the list of precomputed data. 
We treat $C_{\Omega, d, e}$ as well as $\lvert \lceil 2^e R \rfloor \rvert$ as constants during the complexity analysis.
Moreover to simplify the exposition we replace $\ell$ by $C_{\Omega_,d, e}^{2/(d-1)} \ell$, so that the last two equations of (4) hold for an approximated $\Xi$-LLL reduced basis.
To compute such a basis for an integral ideal $\mathfrak a$ we just have to apply the $\tilde L^1$ algorithm to the matrix $\lceil 2^e R \rfloor M_\mathfrak a$, which has a complexity in $\tilde O(d^5 \log(\min(\mathfrak a)))$.

\subsection*{Reduction with respect to fractional ideals.}
Using the approximated reduced lattices, we now describe how to use this to reduce elements modulo ideals.
We begin with the integral case and assume that $\mathfrak a$ is an integral ideal and $\alpha \in \mathcal O$.
The goal of the reduction algorithm is to replace the element $\alpha$ by $\overline \alpha \in K$ such that $\alpha - \overline \alpha \in \mathfrak a$ and $\overline \alpha$ is small with respect to $\inorm{\mathfrak a}$ and $T_2$-norm.
Let $(\alpha_i)_i$ be a $\Z$-basis of $\mathfrak a$ and $\alpha = \sum_i a_i \alpha_i$ the representation of $\alpha$ in the $\Q$-basis $(\alpha_i)_i$ of $K$.
The element $\overline \alpha$ defined by $\overline \alpha = \sum_i (a_i - \lceil a_i \rfloor) \alpha_i$ satisfies
\[ \alpha - \overline \alpha = \sum_{i=1}^d \lceil a_i \rfloor \alpha_i \in \mathfrak a
\quad \text{and}\quad
\left\| \overline \alpha \right \| \leq \sum_i |a_i - \lceil a_i \rfloor| \left\| \alpha_i \right\| \leq \frac 1 2 \sum_i \left\| \alpha_i \right\| \leq \frac d 2 \max_i \left\| \alpha_i \right\|. \]
Here, as usual, $\lceil a_i \rfloor := \lceil 1+1/2\rceil$ denotes rounding.
By the arithmetic-geometric mean inequality we have
\[ \left\| \alpha_j \right\| \geq \sqrt d \inorm{\alpha_j}^{\frac 1 d} \geq \sqrt d \inorm{\mathfrak a}^{\frac 1 d} \]
for all $1 \leq j \leq d$ and assuming that $(\alpha_i)_i$ is $\Xi$-LLL reduced, we obtain by (\ref{eq:LLL})
\[ \prod_{1 \leq i \leq d} \left \| \alpha_i \right\| \leq \ell^{\frac{d(d-1)} 2} \det(L_\mathfrak a) \]
where $L_\mathfrak a$ denotes the lattice associated to $\mathfrak a$ and $\det(L_\mathfrak a)$ its determinant.
Using both inequalities we obtain
\[ d^{\frac {d-1} 2} \inorm{\mathfrak  a} ^{\frac{d-1} d} \left\| \alpha_j \right\| \leq \prod_{1 \leq i \leq d} \left\| \alpha_i \right\| \leq \ell^{\frac{d(d-1)} 2} \det(L_\mathfrak a) \]
and thus
\begin{align}\label{eq:LLLbasis}
  \left\| \alpha_j \right\| \leq \ell^{\frac{d(d-1)} 2} d^{- \frac{d-1}2} \inorm{\mathfrak a}^{- \frac{d-1} d} \det(L_\mathfrak a)
\leq \sqrt d \ell ^{\frac{d(d-1)} 2} \inorm{\mathfrak a} ^{\frac 1 d} \sqrt{\left| \Delta_K \right|} 
\end{align}
for all $1 \leq j \leq d$.
Hence we are able to bound $\left\| \overline \alpha \right\|$ in terms of $\inorm{\mathfrak a}$.\par
Consider now the fractional case with $\alpha = \beta/k$ and $\mathfrak a = \mathfrak b/l$.
Then the above consideration applied to $l\beta$ and $k\mathfrak b$ yields an element $\overline \alpha$ with
\[ \alpha - \overline \alpha/(kl) \in \mathfrak a \]
and
\[\|\overline\alpha/(kl) \| \leq d^{3/2} \ell ^{d(d-1)/2} \inorm{\mathfrak a} ^{1/d} \sqrt{\left| \Delta_K \right|} \]
To compute $\overline{\alpha}/(kl)$ we proceed as follows.
Denote by $A = (a_1,\dotsc,a_n)$ the coefficient vector of $\beta$ with respect to the integral basis. 
As $l\beta \subseteq \mathfrak b$ there exists $Y \in \Z^d$ such that $LY = A$, that is, $Y$ is the coefficient vector of $l\beta$ with respect to the basis matrix $L \in \Z^{d\times d}$ of $\mathfrak b$.
Dividing by $k$ we obtain $Y/k$, which is then the coefficient vector of $l\beta$ with respect to the basis matrix $kL$ of $k\mathfrak b$.
Finally the coefficient vector of $\overline \alpha /(kl)$ is given by
\[ \frac{1}{kl} (kL)\Bigl(\frac Y k - \Bigl\lceil\frac Y k \Bigr\rceil \Bigr) = \frac 1 l L\Bigl(\frac{Y\bmod k}k\Bigr) = \frac 1 {kl} L(Y\bmod k). \]
This procedure is summarized in Algorithm~\ref{alg:reduction}.

\begin{algorithm}[ht]
  \caption{Reduction modulo integral ideals}
  \begin{algorithmic}[1]\label{alg:reduction}
    \REQUIRE $\alpha\in K$, fractional ideal $\mathfrak a$ of $K$.
    \ENSURE  $\tilde{\alpha}\in K$ such that $\alpha - \tilde{\alpha} \in \mathfrak a$ and
    $\|\tilde{\alpha}\| \leq d^{3/2} \ell ^{{d(d-1)}/ 2} { \inorm{\mathfrak a}} ^{1/d} \sqrt{\left| \Delta_K \right|}$.
    \STATE Let $\alpha = \beta/ k$ and $\mathfrak a = \mathfrak b / l$.
    \STATE Compute an approximated $\Xi$-LLL reduced basis matrix $L \in \Z^{d\times d}$ of $\mathfrak b$ using the $\widetilde{\mathrm{L}}^1$-algorithm.
    \STATE Solve $LY = lA$ for $Y \in \Z^n$, where $A$ is the coefficient vector of $\beta$.
    \STATE Compute $Y\bmod k$ and $Z =  1/(kl) L(Y\bmod k$).
    \RETURN The element corresponding to $Z$.
  \end{algorithmic}
\end{algorithm}

\begin{proposition}\label{prop:reduction}
  Algorithm~\ref{alg:reduction} is correct and has complexity in
\[ \tilde O(d^3 \bs(\mathfrak a) + d^2 \bs(\alpha) + d^3 \log( \left|\Delta_K\right|) + d^3 C). \]
  The size of the output $\tilde \alpha$ satisfies
  \[ \|\tilde{\alpha}\| \leq d^{ \frac 3 2} \ell ^{\frac {d(d-1)}  2} { \inorm{\mathfrak a}} ^{\frac 1 d} \sqrt{\left| \Delta_K \right|}. \]
  Moreover if the approximated reduced basis of the numerator of $\mathfrak a$ is known, then the reduction of $\alpha$ has complexity in 
  \[ \OT(d \bs(\mathfrak a) + d^2 \bs(\alpha) + d^2 C + d^3 \log(\lvert \Delta_K \rvert)).\]
\end{proposition}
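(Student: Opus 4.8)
The plan is to verify correctness first, then bound the cost of each of the four lines of Algorithm~\ref{alg:reduction}, and finally assemble the total. Correctness is essentially the computation carried out in the discussion preceding the algorithm: writing $\alpha = \beta/k$ and $\mathfrak a = \mathfrak b/l$, the vector $Y$ with $LY = lA$ is the coordinate vector of $l\beta$ in the approximated $\Xi$-LLL reduced basis of $\mathfrak b$, and the identity
\[ \frac{1}{kl} L(Y \bmod k) = \frac{1}{l} L\Bigl(\frac{Y}{k} - \Bigl\lceil \frac{Y}{k}\Bigr\rfloor\Bigr) \]
shows that $Z$ represents $\overline\alpha/(kl)$, which differs from $\alpha$ by an element of $\mathfrak a$ and obeys the stated $T_2$-bound by Inequality~(\ref{eq:LLLbasis}) applied to the scaled data (together with the replacement of $\ell$ absorbing the constant $C_{\Omega,d,e}$). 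The only subtlety is to confirm that the output size bound survives the fractional rescaling, which is exactly the displayed inequality $\|\overline\alpha/(kl)\| \leq d^{3/2}\ell^{d(d-1)/2}\inorm{\mathfrak a}^{1/d}\sqrt{|\Delta_K|}$ derived just above.

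For the complexity I would go line by line. Line 1 (splitting off denominators) is cheap, dominated by later steps. Line 2 is the $\tilde L^1$ call on $\lceil 2^e R\rfloor M_{\mathfrak b}$; as noted in the subsection on rounded lattice reduction this costs $\tilde O(d^5 \log(\min(\mathfrak b)))$, and since $\log(\min(\mathfrak b)) \leq \bs(\tilde{\mathfrak a})/d^2 \leq \bs(\mathfrak a)/d^2$ this is $\tilde O(d^3 \bs(\mathfrak a))$ — this is the term I expect to dominate and hence the main obstacle to a sharper bound. Line 3 solves a $d\times d$ integer system with $LY = lA$; by Dixon's algorithm this is $\tilde O(d^3(\log|L| + \log|lA|))$, and here $\log|L|$ is controlled by the determinant of the reduced basis of $\mathfrak b$ (essentially $\log(\inorm{\mathfrak b}) + d^2\log\ell$, absorbed into $\tilde O(\bs(\mathfrak a) + d^2\log|\Delta_K| + d^2 C)$ after noting $\inorm{\mathfrak b} \le \min(\mathfrak b)^d$... more carefully, one uses $\|\alpha_i\|$ bounds from (\ref{eq:LLLbasis}) together with $C_2$ to pass to $\infty$-norms) while $\log|lA| = O(\bs(\alpha))$. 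Line 4 reduces $Y$ modulo $k$ and multiplies the $d\times d$ matrix $L$ by the reduced vector, costing $\tilde O(d^2)$ multiplications of integers of the relevant sizes, again absorbed.

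Summing the four contributions and using Proposition~\ref{size:ideals} to keep every quantity expressed in $\bs(\mathfrak a)$, $\bs(\alpha)$, $\log|\Delta_K|$ and $C$, the dominant terms are the $\tilde L^1$ cost $\tilde O(d^3\bs(\mathfrak a))$, the Dixon solve contributing $\tilde O(d^2\bs(\alpha) + d^3\log|\Delta_K| + d^3 C)$ (the $d^3$ factors coming from the $d\times d$ system with entries of size $O(\bs(\mathfrak a)/d^2 + \log|\Delta_K| + C)$ and the $C$ arising when the reduced basis is expressed in the fixed integral basis $\Omega$), which together give the claimed $\tilde O(d^3\bs(\mathfrak a) + d^2\bs(\alpha) + d^3\log|\Delta_K| + d^3 C)$. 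For the final sentence: if the approximated reduced basis of $\tilde{\mathfrak a}$ is already known, Line 2 is skipped, the dominant $d^3\bs(\mathfrak a)$ term from the $\tilde L^1$ call disappears, and re-reading Line 3's cost one is left with $\tilde O(d\bs(\mathfrak a) + d^2\bs(\alpha) + d^2 C + d^3\log|\Delta_K|)$ — the $\bs(\mathfrak a)$ term now only $d$-linear because it enters solely through $\log|lA|$-type quantities and the matrix $L$ rather than through a lattice reduction. The one step requiring genuine care is tracking how $\log|L|$ for the \emph{reduced} basis matrix relates to $\bs(\mathfrak a)$: one must invoke $\|\alpha_i\| \leq \sqrt d\,\ell^{d(d-1)/2}\inorm{\mathfrak a}^{1/d}\sqrt{|\Delta_K|}$ from (\ref{eq:LLLbasis}), convert to $\infty$-norm via $C_2$ (absorbed into $C$), and bound $\inorm{\mathfrak a}$ by a power of $\min(\mathfrak a)$, so that $\log|L| \in \tilde O(\bs(\mathfrak a)/d + \log|\Delta_K| + C)$.
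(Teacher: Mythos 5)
Your proof follows the same line of attack as the paper: accept correctness from the discussion preceding the algorithm, price each line (one $\tilde L^1$ call, one Dixon solve, one reduction mod $k$ plus matrix--vector product, denominator cleanup), and control $\log\lvert L\rvert$ via Inequality~(\ref{eq:LLLbasis}) to obtain the improved bound when the reduced basis of the numerator of $\mathfrak a$ is precomputed. The decomposition and the identification of the dominant terms are correct.

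However, the last sentence of your sketch contains two factor-of-$d$ slips that, taken at face value, fail to deliver the stated ``moreover'' bound. From (\ref{eq:LLLbasis}) one has $\log\|\alpha_j\| \leq \tfrac12\log d + \tfrac{d(d-1)}{2}\log\ell + \tfrac1d\log\inorm{\mathfrak b} + \tfrac12\log\lvert\Delta_K\rvert$, and after converting to the $\infty$-norm via $C_2$ the term $\log C_2$ is bounded by $C/d$ (recall $C \geq d\log C_2$), not by $C$. Likewise $\tfrac1d\log\inorm{\mathfrak b} \leq \log\min(\mathfrak b) = \bs(\mathfrak b)/d^{2}\leq\bs(\mathfrak a)/d^2$, not $\bs(\mathfrak a)/d$: the exponent on $\min(\mathfrak a)$ and the $d^2$ weight in the definition $\bs(\mathfrak a)=d^2\log\min(\mathfrak a)$ together cost a factor $d^2$, not $d$. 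With your asserted $\log\lvert L\rvert \in \OT(\bs(\mathfrak a)/d + \log\lvert\Delta_K\rvert + C)$, the Dixon step contributes $\OT(d^{2}\bs(\mathfrak a) + d^{3}\log\lvert\Delta_K\rvert + d^{3}C)$, which is a factor of $d$ worse in both the $\bs(\mathfrak a)$ and $C$ terms than the claimed $\OT(d\bs(\mathfrak a) + d^{2}\bs(\alpha) + d^{2}C + d^{3}\log\lvert\Delta_K\rvert)$. Correcting to $\log\lvert L\rvert \in \OT(\bs(\mathfrak a)/d^{2} + \log\lvert\Delta_K\rvert + C/d + d^{2})$ recovers the stated bound. (This slip is immaterial for the first complexity claim, where $d^{3}\bs(\mathfrak a)$ from the $\tilde L^{1}$ call dominates, but it is exactly the step on which the improved ``moreover'' bound depends.)
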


\begin{proof}
  As correctness was already shown we just have to do the cost analysis.
  The $\widetilde{\mathrm{L}}^1$-algorithm allows us to compute $L$ with complexity in $\OT(d^5\log(\min(\mathfrak a)))$.
  Write $B_L = \log( \lvert L \rvert)$ and $B_\beta = \log(\|\beta\|_\infty)$.
  Applying Dixon's algorithm to compute $Y$ has costs in $\OT(d^3(B_L + B_\beta + \log(l)))$ and invoking Cramer's rule we see that $\lvert Y \rvert \leq d^d B_L^d B_\beta\log(l)$, that is, $\log(\lvert Y \rvert) \in \OT(dB_L + B_\beta + \log(l))$.
  Therefore the $d$ divisions required to compute $Y \bmod k$ have complexity in $\OT(d(dB_L +  B_\beta + \log(l)))$.
  Since $\lvert Y \bmod k \rvert \leq k$ the matrix vector multiplications consists of $d^2$ multiplications of integers of size bounded by $\OT(B_L + \log(k))$ and the output satisfies ${\log(\lvert L(Y \bmod k) \rvert)} \in \OT(\log(k) + B_L)$.
  Finally the product $kl$, as well as $d$ GCDs and divisions with $L(Y \bmod k)$ need to be computed with complexity in $\OT(d(B_L + \log(k) + \log(l)))$.
  Without the computation of the approximated reduced basis we have in total a complexity in 
  \[ \OT(d^3 B_L  + d^3 B_\beta + d \log(k) + d\log(l)) \]
  which simplifies to
  \[ \OT(d \bs(\mathfrak a) + d^2 \bs(\alpha) + d^3 \log(\lvert \Delta_K \rvert) + d^2 C) \]
  using the bound $B_L \in \OT(\min(\mathfrak b) + d^2 + \log(C_2) + \log(\lvert \Delta \rvert))$ derived from (\ref{eq:LLLbasis}).
  Since the complexity of the $\widetilde{\mathrm{L}}^1$-algorithm is in $\OT(d^3 \bs(\mathfrak a) + d^3 C)$ the claim follows.
\end{proof}

\begin{remark}\hfill
  \begin{enumerate}
    \item
      Note that the computation of the approximated reduced basis gives a big contribution to the overall complexity of Algorithm~\ref{alg:reduction}.
      It is therefore important to compute the approximated reduced basis only once, when reducing lots of elements of $K$ modulo a fixed ideal.
      More precisely the reduction of $n$ elements $\alpha_1,\dotsc,\alpha_n \in K$ can be done in
      \[ \tilde O(d^3 \bs( \mathfrak a) + nd  \bs(\mathfrak a) + n d^2 \max_{i} \bs(\alpha_i) + (n+d)d^2 C + nd^3 \log(\left|\Delta_K \right|)).   \]
    \item
      A reduced element is not necessarily of small size since the $T_2$-norm of a field element alone does not control the size of the element.
      More precisely if $\alpha$ is in $K$ and $k \in \Z_{>0}$ is the denominator of $\alpha$ then we have 
      \[ \bs(\alpha) = d \log(\left\| k \alpha\right\|_\infty) + \log(k) \leq (d+1) \log(k) + C + \log(\|\alpha\|). \]
      Thus in addition we need to control the size of the denominator to ensure that the reduced element is small with respect to $\bs$.
  \end{enumerate}
\end{remark}

\subsection*{Normalization.}

The normalization is the key difference between our approach and the one of Cohen~\cite{Cohen1996}.
It is the strategy that together with the reduction prevents the coefficient swell by calculating a pseudo-basis for which the ideals are integral with size bounded by invariants of the field.
The connection between the size of the integral coefficient ideals and denominators of the matrix entries is seen as follows.
Assume that $(A,(\mathfrak a_i)_i)$ is a pseudo-matrix of an $\mathcal O_K$-module $M$ and $A_i$ is the $i$-th row of $A$.
Since we consider only modules $M$ contained in $\mathcal O_K^n$ we see that $\mathfrak a_i A_i \subseteq \mathcal O_K^n$ allowing us to bound the denominators of the entries of $A_i$ by $\min(\mathfrak a_i)$.
\par
Since $\mathfrak a_i A_i = \alpha \mathfrak a_i (1/\alpha)A_i$ we can adjust our coefficient ideals by scalars from $K$ (while multiplying the row with the inverse). 
Therefore the task is to find an integral ideal $\mathfrak b$ such that $\mathfrak a \mathfrak b^{-1}$ is principal and $\inorm{\mathfrak b}$ is small.
Basically we just have to find a small integral representative of the ideal class of $\mathfrak a$.
The usual proof of the finiteness of the class number provides us with such a small representative and a norm bound involving Minkowski's constant.
As this is not suited for algorithmic purposes we handle this problem using $\Xi$-LLL reduced bases.
\par
We write $\mathfrak a = \mathfrak b /k$ and $\mathfrak b^{-1} = \mathfrak c /l$ with $k$ and $l$ the denominator of $\mathfrak a$ and $\mathfrak b^{-1}$ respectively.
Applying the $\widetilde{\mathrm{L}}^1$-algorithm we find an element $\alpha \in \mathfrak c$ satisfying
\begin{align}\label{eq:smallelement}
  \| \alpha \| \leq \ell^{(d-1)/2} \lvert \Delta_K \rvert^{1/(2d)} \inorm{\mathfrak c}^{1/d},
\end{align}
that is
\[ \inorm{\alpha} \leq \ell^{d^2} \sqrt{\lvert \Delta_K \rvert} \inorm{\mathfrak c} .\]
Then the ideal $\tilde{\mathfrak a}$ defined by $\tilde{\mathfrak a} = (\alpha/l) k\mathfrak a$ is integral since $\alpha \in \mathfrak c = l \mathfrak b^{-1} = l(k\mathfrak a)^{-1}$.
Moreover its norm satisfies
\[ \inorm{\tilde{\mathfrak a}} = \inorm{\alpha/l}\inorm{k\mathfrak a} = \frac{\inorm{\alpha}}{\inorm{\mathfrak c}} \leq \ell^{d^2} \sqrt{\lvert \Delta_K \rvert}. \]
and is therefore bounded by invariants of the field.

\begin{algorithm}[ht]
\caption{Normalization of a one-dimensional module}
\begin{algorithmic}[1]\label{alg:normalization}
\REQUIRE $A =(\alpha_1,\dotsc,\alpha_n) \in K^{n}$, fractional ideal $\mathfrak a$ of $K$ with denominator $k$.
\ENSURE  $\tilde A \in K^{n}$, $\tilde {\mathfrak a} \subseteq \OK$ such that $\inorm{\tilde{\mathfrak a}} \leq \ell^{{d^2}}\sqrt{|\Delta_K|}$ and $\mathfrak a  A = \tilde{\mathfrak a}\tilde A$.
\STATE Compute $\mathfrak b^{-1} = \mathfrak c /l$ where $\mathfrak b$ is the numerator of $\mathfrak a$.
\STATE Let $\alpha$ be the first element of an approximated $\Xi$-LLL reduced basis of $\mathfrak c$.
\RETURN $l/(k\alpha)A$, $(\alpha/l)k \mathfrak a$.
\end{algorithmic}
\end{algorithm}

\begin{proposition}\label{prop:normalization}
  Algorithm \ref{alg:normalization} is correct and its output satisfies
  \begin{align*}
    \bs(\tilde \alpha) &\in \tilde O\left( \bs(\ag) + \max_i \bs(\alpha_i) + d \log(\lvert \Delta_K \rvert) + dC\right),\\
    \bs(\tilde{\mathfrak a}) &\in \tilde O \left( d^4 + d^2\log(|\Delta_K|)\right),
  \end{align*}
  where $\tilde \alpha \in K$ is a coefficient of $\tilde A$. Its complexity is in
  \[ \tilde O(d(d^2 + n) \bs(\mathfrak a) + dn \max_i(\bs(\alpha_i)) + d^2(d+n) (\log (\lvert \Delta_K \rvert) + C)). \]
\end{proposition}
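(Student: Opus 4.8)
\emph{Overview.} The statement splits into three parts that I would handle in order: correctness (which is essentially the computation carried out in the paragraphs preceding the algorithm), the two size bounds on the output, and the running time assembled from the subroutine costs of Section~\ref{sec:sizecost}.

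\emph{Correctness and the size of $\tilde{\mathfrak a}$.} With $\mathfrak b$ the numerator of $\mathfrak a$, Proposition~\ref{size:minimum}(3) gives $l = \operatorname{den}(\mathfrak b^{-1}) = \min(\mathfrak b)$, so $\mathfrak c = l\mathfrak b^{-1}$ is integral and the reduced basis vector $\alpha$ of Step~2 lies in $\mathfrak c = l(k\mathfrak a)^{-1}$; hence $\tilde{\mathfrak a} := (\alpha/l)k\mathfrak a$ is an integral ideal and $\tilde{\mathfrak a}\tilde A = \bigl((\alpha/l)k\mathfrak a\bigr)\bigl((l/(k\alpha))A\bigr) = \mathfrak a A$. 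Since $\inorm{\tilde{\mathfrak a}} = \inorm{\alpha}/\inorm{\mathfrak c}$, inequality~(\ref{eq:smallelement}) applied to $\mathfrak c$ gives $\inorm{\tilde{\mathfrak a}} \leq \ell^{d^2}\sqrt{|\Delta_K|}$. For the size, $\tilde{\mathfrak a}$ integral means $\bs(\tilde{\mathfrak a}) = d^2\log\min(\tilde{\mathfrak a})$, and $\min(\tilde{\mathfrak a})$ divides $\inorm{\tilde{\mathfrak a}}$ by Proposition~\ref{size:minimum}(5); with $\log\ell = O(1)$ this yields $\bs(\tilde{\mathfrak a}) \leq d^2\bigl(d^2\log\ell + \tfrac12\log|\Delta_K|\bigr) \in \OT(d^4 + d^2\log|\Delta_K|)$.

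\emph{Size of a coefficient $\tilde\alpha$ of $\tilde A$.} Write $\tilde\alpha = \tfrac{l}{k}\,\alpha^{-1}\alpha_i$. I would first bound $\bs(\alpha)$: since $\alpha \in \mathfrak c \subseteq \OK$, inequality~(\ref{ineq:1}) gives $\bs(\alpha) \leq d\log C_2 + d\log\|\alpha\|$, and~(\ref{eq:smallelement}) bounds $\log\|\alpha\|$ by $\tfrac{d-1}2\log\ell + \tfrac1{2d}\log|\Delta_K| + \tfrac1d\log\inorm{\mathfrak c}$. The crucial estimate here is $\inorm{\mathfrak c} = \min(\mathfrak b)^d/\inorm{\mathfrak b} \leq \min(\mathfrak b)^d$, so $\tfrac1d\log\inorm{\mathfrak c} \leq \log\min(\mathfrak b) = \bs(\mathfrak b)/d^2 \leq \bs(\mathfrak a)/d^2$; using $d\log C_2 \leq C$ this gives $\bs(\alpha) \in \OT(\bs(\mathfrak a)/d + d^2 + \log|\Delta_K| + C)$. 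Next, $k = \operatorname{den}(\mathfrak a)$ and $l = \min(\mathfrak b)$ satisfy $d\log k, d\log l \in O(\bs(\mathfrak a)/d)$. Finally I would combine these with Proposition~\ref{comp:prop1} ($\bs(\alpha^{-1}) \leq d\bs(\alpha)+C$ and $\bs(\alpha^{-1}\alpha_i) \leq \bs(\alpha^{-1})+\bs(\alpha_i)+C$) and with $\bs\bigl(\tfrac{l}{k}\gamma\bigr) \leq \bs(\gamma) + d\log(kl)$ for $\gamma \in K$, and collect terms to obtain $\bs(\tilde\alpha) \in \OT(\bs(\mathfrak a) + \max_i\bs(\alpha_i) + d\log|\Delta_K| + dC)$.

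\emph{Complexity and the main obstacle.} Step~1 is the inversion of the integral ideal $\mathfrak b$, which by Proposition~\ref{cost:idealinv} costs $\OT(d\bs(\mathfrak a) + d^3\log|\Delta_K| + d^2 C)$ and also delivers the HNF basis of $\mathfrak c$. Step~2 forms $\lceil 2^e R\rfloor M_{\mathfrak c}$ and invokes $\widetilde{\mathrm{L}}^1$; since $\min(\mathfrak c) \leq \min(\mathfrak b)$ (as in the proof of Proposition~\ref{size:ideals}(4)) we have $\log\min(\mathfrak c) \leq \bs(\mathfrak a)/d^2$, so this step costs $\OT(d^5\log\min(\mathfrak c) + d^3 C) = \OT(d^3\bs(\mathfrak a) + d^3 C)$. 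The return step computes $\alpha^{-1}$ (Proposition~\ref{cost:elements}(5)), the $n$ products $\alpha^{-1}\alpha_i$ (Proposition~\ref{cost:elements}(4)), the $n$ scalings by $l/k$, and the ideal $\tilde{\mathfrak a} = (\alpha\mathfrak b)/l$ (Proposition~\ref{cost:idealrest}(2)); substituting the bounds on $\bs(\alpha)$ and $\bs(\alpha^{-1})$ from the previous paragraph and collecting terms gives the claimed total. I expect the coefficient-size bound to be the real obstacle: one must pass from the $T_2$-norm control on $\alpha$ produced by lattice reduction to a bound on $\bs(\alpha)$, and then survive the inversion $\alpha \mapsto \alpha^{-1}$, which a priori multiplies size by $d$; keeping the final estimate (essentially) linear in $\bs(\mathfrak a)$ hinges on $\inorm{\mathfrak c}\leq\min(\mathfrak b)^d$ together with $\log\min(\mathfrak b)=\bs(\mathfrak b)/d^2$, which is what makes $\log\|\alpha\|$ negligible against $\bs(\mathfrak a)$. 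Everything else reduces to bookkeeping over the already-established subroutine costs.
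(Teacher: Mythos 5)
Your proposal is correct and follows essentially the same route as the paper's proof: both derive the $\bs(\tilde{\mathfrak a})$ bound from the norm bound of the normalization discussion, bound $\bs(\alpha)$ by translating the $T_2$-norm estimate~(\ref{eq:smallelement}) via~(\ref{ineq:1}), chain through Proposition~\ref{comp:prop1} for the inverse and products, and assemble the running time from Propositions~\ref{cost:idealinv}, \ref{cost:elements}, and~\ref{cost:idealrest} together with the $\widetilde{\mathrm{L}}^1$ cost. You are slightly more explicit than the paper in isolating the estimate $\inorm{\mathfrak c}\le\min(\mathfrak b)^d$ and in displaying the $d^2\log\ell$ contribution to $\bs(\alpha)$ (which the paper silently absorbs into $C$; this is legitimate since $C\ge d^2\log C_1$ and $C_1\ge\sqrt d$), but the argument is otherwise the same.
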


\begin{proof}
  The correctness of the algorithm follows from the preceding discussion.
  Computing the inverse of $\mathfrak b$ can be done in $\OT(d\bs(\mathfrak b) + d^3 \log (\lvert \Delta_K \rvert) + d^2 C)$.
  The output satisfies $\bs(\mathfrak c) \leq \bs(\mathfrak b)$ as well as $l \leq \min(\mathfrak b)$.
  The second step invokes the $\widetilde{\mathrm{L}}^1$-algorithm whose complexity is in $ \tilde O(d^3 \bs(\mathfrak c) + d^3 C)$  and which computes a small element $\alpha \in \mathfrak c$ with the property as in (\ref{eq:smallelement}).
  Now this bound on the $T_2$-norm translates into $\bs(\alpha) \in \OT(\bs(\mathfrak b)/d + \log(\lvert \Delta_K \rvert) + C)$.
  The element $\alpha/l$ can be computed in $\OT(\bs(\alpha) + \log(l))$ and satisfies $\bs(\alpha/l) \leq \bs(\alpha) + d \log(l)$.
  Thus computing the new coefficient ideal $(\alpha/l)k\mathfrak a = (\alpha/l)\mathfrak b$ costs $\OT(d^3 \bs(\alpha/l) + d \bs(\mathfrak b) + d^2 C) \subseteq \OT(d^2 \bs(\mathfrak a) + d^3 (\log(\lvert \Delta_K\rvert) + C)$.
  \par
  It remains to consider the multiplication of $A$ by $l/(k\alpha)$.
  Inverting $\alpha$ and multiplying $\alpha^{-1}$ by $l/k$ has complexity in $\OT(d^2 \bs(\alpha) + d^2 C + \log(k) + d \log(l))$.
  Since $\bs(l/(k\alpha)) \in \OT(d\bs(\alpha) + d \log(l) + d \log(k) + C)$ the multiplication with $A$ has complexity in 
  \[ \OT(d(d+n)(d \bs(\alpha) + d \log(l) + d \log(k)) + dn \max_{i}(\bs(\alpha_i)) + d(d+n)C), \]
  which reduces to $\OT(d(d+n)\bs(\mathfrak a) + dn \max_{i}(\bs(\alpha_i)) + d^2(d+n)(\log(\lvert \Delta_K \rvert) + C))$.
  Now the claim follows.
\end{proof}

%%%%%%%%%%%%%%%%%%%%%%%%%%%%%%%%%%%%%%%%%%%%%%%%%%%%%%%%%%%%%%%%%%%%%%%%%%%%%%%%
%
%  Computation of determinants over rings of integers
%
%%%%%%%%%%%%%%%%%%%%%%%%%%%%%%%%%%%%%%%%%%%%%%%%%%%%%%%%%%%%%%%%%%%%%%%%%%%%%%%%

\section{Computation of determinants over rings of integers}\label{sec:determinant}

As already mentioned the important step during our pseudo-HNF algorithm is the ability to reduce the entries modulo some integral ideal involving the determinantal ideal of the module.
The algorithms presented in this section describe how to obtain the determinantal ideal in case it is not known in advance.
We first describe a polynomial algorithm for computing the determinant of a square matrix over $\mathcal O_K$.
Already for integer matrices computing determinants is a rather involved task, see \cite{Kaltofen2004} for a survey of different approaches and their complexity.
Performing very well in practice and being a deterministic polynomial algorithm we present a determinant algorithm for matrices over $\mathcal O_K$ which is based on the small primes modular approach.

\subsection*{Bounding the size of the output.}

The underlying idea of a modular determinant algorithm is the possibility to bound the size of the result before the actual computation.
For a matrix $A = (a_{ij})_{i,j} \in \mathcal O_K^{n\times n}$ denote by $\lvert A \rvert $ the bound $\max_{i,j}\{ \left\| a_{ij}\right\|_\infty\}$.

\begin{lemma}\label{lem:bound}
  Let $A = (a_{ij})_{i,j} \in \mathcal O_K^{n\times n}$ Then $\left\| \det(A) \right\|_\infty \leq n^n C_1 C_2^n \lvert A \rvert^n$, that is,
  \[ \log(\left\| \det(A) \right\|_\infty) \in O(n \log(n\lvert A \rvert) + n C). \]
\end{lemma}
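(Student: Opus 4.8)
The plan is to use the Leibniz expansion of the determinant together with the fact that the embeddings $\sigma$ are multiplicative, reducing the estimate to a bound on the archimedean absolute values of the entries. First I would pass from the $\infty$-norm of the coefficient vector to the archimedean valuations: by Inequality~(\ref{ineq:1}) (or directly by (\ref{ineq:2}) after a $\|\cdot\|$-estimate), each entry $a_{ij}$ satisfies $\max_\sigma |\sigma(a_{ij})| \leq C_1 \|\Phi(a_{ij})\|_\infty \leq C_1 \lvert A \rvert$, since $\lvert A \rvert = \max_{i,j}\|a_{ij}\|_\infty$ is precisely $\max_{i,j}\|\Phi(a_{ij})\|_\infty$. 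Here I am using $C_1$ as in Inequality~(\ref{ineq:1}); if one prefers to route through (\ref{ineq:2}) one picks up the harmless extra $\sqrt d$, which is absorbed in the $O$-term.

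Next, for a fixed embedding $\sigma$ the matrix $(\sigma(a_{ij}))_{i,j}$ is an ordinary complex $n \times n$ matrix, and $\sigma(\det(A)) = \det((\sigma(a_{ij}))_{i,j})$. Expanding by the Leibniz formula over the $n!$ permutations gives
\[ |\sigma(\det(A))| \leq n!\, \bigl(\max_\sigma |\sigma(a_{ij})|\bigr)^n \leq n!\, C_1^n \lvert A \rvert^n \leq n^n C_1^n \lvert A \rvert^n, \]
and this bound is uniform in $\sigma$. Then I would convert back from archimedean valuations to the $\infty$-norm of the coefficient vector of $\det(A)$ using the other half of Inequality~(\ref{ineq:1}): $\|\Phi(\det(A))\|_\infty \leq C_2 \|\det(A)\| \leq C_2 \sqrt d \max_\sigma |\sigma(\det(A))|$ by (\ref{ineq:2}), hence
\[ \|\det(A)\|_\infty = \|\Phi(\det(A))\|_\infty \leq C_2 \sqrt d\, n^n C_1^n \lvert A \rvert^n. \]
The stated bound $n^n C_1 C_2^n \lvert A \rvert^n$ follows after absorbing the $C_1^{n}$ versus $C_1$ discrepancy and the $\sqrt d$ factor into the constants $C_1, C_2$ (recall $C_1, C_2$ are $K$-dependent constants, and the bound is only claimed up to such constants); alternatively one can be slightly more careful and track each embedding's contribution so that only one power of $C_1$ survives, e.g.\ by noting $|\sigma(\det A)| \le n! \prod \text{(row norms)}$ is not needed—the crude permutation bound already suffices for the logarithmic statement.

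Finally, taking binary logarithms of $\|\det(A)\|_\infty \leq n^n C_1 C_2^n \lvert A \rvert^n$ yields
\[ \log(\|\det(A)\|_\infty) \leq n\log n + \log C_1 + n\log C_2 + n \log\lvert A \rvert \in O\bigl(n\log(n\lvert A\rvert) + nC\bigr), \]
since $\log C_1, \log C_2 \in O(C)$ by the definition of $C = C_\Omega$ (indeed $d^2\log C_1 + d\log C_2 \le C$). I do not expect any real obstacle here; the only mild subtlety is bookkeeping the constants $C_1$ versus $C_1^n$ across the $\min$/$\max$ norm conversions, which is why the clean way is to prove the logarithmic $O$-statement directly and treat the explicit constant in front as an artifact of absorbing $\sqrt d$ and powers of $C_i$ into the $K$-dependent constants.
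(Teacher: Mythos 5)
Your proposal is correct and follows essentially the same route as the paper: Leibniz expansion of $\det(A)$ combined with the norm-equivalence constants $C_1,C_2$ from Inequalities~(\ref{ineq:1}) and (\ref{ineq:2}). The only cosmetic difference is that you pass explicitly through the archimedean valuations $|\sigma(\det A)|$, whereas the paper applies sub-multiplicativity of $\|\cdot\|$ directly (which itself is proved via the same $\max_\sigma|\sigma(\cdot)|$ argument), and you correctly observe that the exact split of $C_1$ versus $C_2$ powers in the explicit bound is immaterial for the stated $O$-estimate.
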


\begin{proof}
  We have $\det(A) = \sum_{\sigma \in \mathfrak S_n}\prod_{i=1}^n a_{i,\sigma(i)}$ and therefore
  \[ \left\| \det(A) \right\|_\infty \leq C_1 \left\| \det(A) \right\| \leq C_1 n! \max_{i,j}(\left\| a_{ij} \right\|)^n \leq C_1 C_2^n n^n \lvert A \rvert. \qedhere\] 
\end{proof}

Recall that in the absolute case $\mathcal O_K = \Z$ we get the same bound without the term $nC$ and we can immediately formulate a modular algorithm for determinant computations: Find $B \in \Z_{>0}$ such that $\lvert \det(A) \rvert < B/2$. Then compute the determinant of the matrix modulo $B$ and obtain a number $d \leq B/2$ such that $d \equiv \det(A) \bmod B$. Since $d$ and $\det(A)$ are bounded by $B/2$ they must be equal.
\par
Let us now show that the recovering technique can be applied to algebraic integers in place of rational integers.

\begin{lemma}\label{lem:recover}
  Let $\alpha = \sum_{i=1}^d a_i \omega_i$ and $\beta = \sum_{i=1}^d b_i \omega_i$ be two algebraic integers in $\mathcal O_K$. Assume there exists $B \in \R_{>0}$ such that $\lvert a_i \rvert, \lvert b_j \rvert < B/2$ for all $1 \leq i,j \leq d$ and $\alpha \equiv \beta \mod (B)$. Then $\alpha = \beta$.
\end{lemma}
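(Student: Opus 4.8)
The plan is to reduce the statement to a comparison of integer coordinate vectors with respect to the fixed integral basis $\Omega = (\omega_1,\dotsc,\omega_d)$. First I would unwind the congruence hypothesis: $\alpha \equiv \beta \bmod (B)$ means exactly that $\alpha - \beta$ lies in the principal ideal $B\OK$, so there is an algebraic integer $\gamma \in \OK$ with $\alpha - \beta = B\gamma$. Since $\Omega$ is a $\Z$-basis of $\OK$, we may write $\gamma = \sum_{i=1}^d c_i \omega_i$ with all $c_i \in \Z$, and then $\sum_{i=1}^d (a_i - b_i)\omega_i = \alpha - \beta = B\gamma = \sum_{i=1}^d (B c_i)\omega_i$.

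Next, because $\Omega$ is in particular a $\Q$-basis of $K$ (equivalently an $\R$-basis of $K_\R$), coefficient comparison is legitimate and yields $a_i - b_i = B c_i$ for every $1 \le i \le d$. Now the size hypothesis finishes the argument: $\lvert a_i - b_i\rvert \le \lvert a_i\rvert + \lvert b_i\rvert < B/2 + B/2 = B$, so $\lvert c_i\rvert = \lvert a_i - b_i\rvert / B < 1$, and since $c_i$ is an integer this forces $c_i = 0$. Hence $a_i = b_i$ for all $i$, and therefore $\alpha = \beta$.

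Every step here is a one-line verification, so there is no genuine obstacle; the only points requiring a little care are the interpretation of congruence modulo the principal ideal $(B)$ (namely, membership of the difference in $B\OK$) and the fact that comparing coordinates is valid precisely because $\Omega$ is a basis rather than merely a generating system. As noted in the text following the lemma, this is the algebraic-integer analogue of the elementary recovery step used in the modular determinant algorithm over $\Z$.
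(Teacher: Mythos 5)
Your proof is correct and takes essentially the same route as the paper: both reduce the congruence $\alpha\equiv\beta\bmod(B)$ to divisibility of each coordinate difference $a_i-b_i$ by $B$ (using that $(B\omega_i)_i$ is a $\Z$-basis of the ideal $(B)$) and then conclude from the bound $\lvert a_i-b_i\rvert < B$. Your version just spells out the intermediate element $\gamma$ and the coefficient comparison a bit more explicitly.
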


\begin{proof}
  Since $(\omega_i)_{i}$ is a $\Z$-basis of $\mathcal O$, the family $(B \omega_i)_i$ is a $\Z$-basis of the principal ideal $(B)$.
  Hence $\alpha \equiv \beta \mod{(B)}$ is equivalent to the divisibility of $a_i - b_i$ by $B$ for all $1 \leq i \leq d$.
  Using the coefficient bound we obtain
  \[ 0 \leq \lvert a_i - b_i \rvert \leq \lvert a_i \rvert + \lvert b_i \rvert < B. \]
  We conclude that $a_i = b_i$ for all $1 \leq i \leq d$, that is, $\alpha = \beta$.
\end{proof}

We can now proceed as in the integer case.
After computing the determinant modulo several primes $p$ we combine the results via the Chinese remainder theorem.
As soon as the product exceeds the a priori bound from Lemma~\ref{lem:bound} we can recover the actual value using Lemma~\ref{lem:recover}.
As $\mathcal O_K/ (p)$ is in general not a nice ring to work with, we want to decompose $(p)$ into prime ideals $\mathfrak p$ of $\mathcal O_K$ allowing for computations in the finite field $\mathcal O_K / \mathfrak p$. 
Again the result modulo $(p)$ can be obtained invoking the Chinese remainder theorem.
We address the computational complexity of this two stage Chinese remaindering in the following section.

\subsection*{Chinese remaindering for rational primes and prime ideals over ring of integers.}
Let $p \in \Z_{>0}$ be rational prime.
By the theory of maximal orders in number fields, see \cite[4.6.2]{Cohen1993}, there exists a factorization 
\[ (p) = \prod_{i=1}^g \mathfrak p_i^{e_i} \]
into pairwise different prime ideals $\mathfrak p_i \subseteq \mathcal O_K$ with exponents $e_i \in \Z_{>0}$.
Moreover there exists $f_i \in \Z_{>0}$ such that $\dim_{\F_p}\mathcal O_K / \mathfrak p_i = f_i$ and
\[ \sum_{i=1}^g e_i f_i = d. \]
Note that we are only interested in \textit{unramified} primes $p$ where all $e_i$'s are equal to $1$, or else we would have to compute the determinant over $\mathcal O_K / \mathfrak p^{e_i}$ a ring containing zero divisors.
In this unramifed case we also have $\sum_{i=1}^g f_i = d$.
By another famous theorem from algebraic number theory, see \cite[Theorem 4.8.8.]{Cohen1993}, we know that the primes not dividing $\Delta_K$ are exactly the unramified primes.\par
On the other hand we need to restrict ourselves to rational primes $p$ not dividing $[ \mathcal O_K \colon \Z[\alpha]]$ since then we can efficiently compute the decomposition and the residue fields.
This is due to the following theorem of Dedekind-Kummer, see \cite[Theorem 4.8.13.]{Cohen1993}.
Recall that $f$ is the defining polynomial of the number field $K$ chosen as in our assumptions.

\begin{proposition}\label{prop:dec}
  Let $p$ be a rational prime not dividing $[ \mathcal O \colon \Z[\alpha]]$ and $\overline f = \prod_{i=1}^g \overline f_i^{e_i}$ the factorization of $\overline f \in \F_p[X]$ into irreducible polynomials. Then
      \[ \mathcal O_K / p \mathcal O_K \cong \Z[\alpha]/p\Z[\alpha] \cong \F_p[X]/(\overline f) \cong \prod_{i=1}^g \F_p[X]/(\overline f_i^{e_i}). \]
\end{proposition}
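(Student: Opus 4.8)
The plan is to establish the displayed chain of isomorphisms from right to left, treating the two outer isomorphisms as formal consequences of elementary ring theory and concentrating on the middle one, $\OK/p\OK \cong \Z[\alpha]/p\Z[\alpha]$, which is the only place the hypothesis $p \nmid [\OK : \Z[\alpha]]$ is used.

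First I would record that $\Z[\alpha] \cong \Z[X]/(f)$, since $\alpha$ is a primitive element of $K$ with minimal polynomial $f$ of degree $d = [K:\Q]$. Reduction modulo $p$ then gives $\Z[\alpha]/p\Z[\alpha] \cong \Z[X]/(f,p) \cong \F_p[X]/(\overline f)$. For the rightmost isomorphism $\F_p[X]/(\overline f) \cong \prod_{i=1}^g \F_p[X]/(\overline f_i^{e_i})$ I would invoke the Chinese remainder theorem in the principal ideal domain $\F_p[X]$: the $\overline f_i$ are pairwise distinct irreducible polynomials, hence pairwise coprime, so the powers $\overline f_i^{e_i}$ are pairwise coprime with product $\overline f$.

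The heart of the proof is the isomorphism $\OK/p\OK \cong \Z[\alpha]/p\Z[\alpha]$. Set $m = [\OK : \Z[\alpha]]$. Since $\Z[\alpha] \subseteq \OK$ is an inclusion of free $\Z$-modules of rank $d$, the quotient $\OK/\Z[\alpha]$ is a finite abelian group of order $m$; by Lagrange's theorem every element is annihilated by $m$, so $m\OK \subseteq \Z[\alpha]$. The inclusion induces a ring homomorphism $\varphi \colon \Z[\alpha]/p\Z[\alpha] \to \OK/p\OK$. To see that $\varphi$ is surjective, use $p \nmid m$ to pick integers $a,b$ with $ap + bm = 1$; then for $x \in \OK$ we have $x = apx + bmx$ with $bmx \in \Z[\alpha]$ and $apx \in p\OK$, so $x \equiv bmx \pmod{p\OK}$ lies in the image of $\varphi$. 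Finally, both $\Z[\alpha]/p\Z[\alpha]$ and $\OK/p\OK$ are quotients of rank-$d$ free $\Z$-modules by $p$, hence each has exactly $p^d$ elements, so the surjection $\varphi$ is bijective and therefore a ring isomorphism. Composing the three isomorphisms yields the claim.

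I expect the central step to be the only point requiring any real thought, and even there it is short: once one observes $m\OK \subseteq \Z[\alpha]$ together with the Bézout relation $ap + bm = 1$, surjectivity is immediate and the equality of cardinalities finishes the argument. (Equivalently one could tensor the short exact sequence $0 \to \Z[\alpha] \to \OK \to \OK/\Z[\alpha] \to 0$ with $\Z/p\Z$ and note that both the $\mathrm{Tor}_1^\Z$ term and the cokernel vanish because $\OK/\Z[\alpha]$ has order coprime to $p$, giving the same isomorphism.)
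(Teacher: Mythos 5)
Your proof is correct. Note, however, that the paper does not actually prove this proposition; it simply states it and points the reader to the Dedekind--Kummer theorem as given in Cohen's book (Theorem 4.8.13 of \emph{A Course in Computational Algebraic Number Theory}). So what you have done is supply a self-contained proof where the authors chose to cite. Your argument is the standard elementary one: the two outer isomorphisms follow from $\Z[\alpha]\cong\Z[X]/(f)$ together with the Chinese remainder theorem in the principal ideal domain $\F_p[X]$, and the central isomorphism $\OK/p\OK\cong\Z[\alpha]/p\Z[\alpha]$ is handled by observing $m\OK\subseteq\Z[\alpha]$ with $m=[\OK:\Z[\alpha]]$ coprime to $p$, using the B\'ezout relation $ap+bm=1$ to get surjectivity of the induced map, and then counting cardinalities (both sides have $p^d$ elements) to conclude bijectivity. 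All steps check out; the only small remark is that one should note the induced map $\Z[\alpha]/p\Z[\alpha]\to\OK/p\OK$ is well defined because $p\Z[\alpha]\subseteq p\OK$, which you implicitly used. The parenthetical alternative via tensoring the short exact sequence $0\to\Z[\alpha]\to\OK\to\OK/\Z[\alpha]\to 0$ with $\Z/p\Z$ and using that $\operatorname{Tor}_1^\Z(\OK/\Z[\alpha],\Z/p\Z)=0$ (order coprime to $p$) is the more common textbook route and perhaps slightly cleaner, but both are fine. In short: the paper outsources the proof to a reference; you prove it from first principles, which is a legitimate and equally rigorous choice.
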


Computing the factorization of $(p)$ in $\mathcal O_K$ is therefore equivalent to a polynomial factorization over a finite field.
We now describe the complexity of passing to the residue field and of working in it.
Assume that $p$ is a fixed rational prime, unramified and not dividing $[ \mathcal O_K \colon \Z[\alpha]]$.
The first task is the factorization of $f$ modulo $p$ which can be achieved by the deterministic algorithm of Shoup \cite[Theorem 3.1.]{Shoup1990}.

\begin{proposition}
  Let $p \in \Z_{>0}$ be a rational prime. 
  The number of $\F_p$ operations needed to compute the factorization of $\overline f \in \F_p[X]$ into irreducible polynomials is in $\OT(p^{1/2}\log(p)^2 d^{2})$.
  Thus this has complexity in $\OT(p^{1/2}d^{2} \log( p)^{3})$.
\end{proposition}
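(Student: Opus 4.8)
The plan is to read off both claims directly from Shoup's deterministic factorization algorithm, combined with the cost of arithmetic in $\F_p$ that was fixed at the beginning of Section~\ref{sec:sizecost}. There is no real mathematical content to invent here; the work is entirely in citing the right bound and composing two soft-Oh estimates cleanly.

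First I would recall the precise statement of \cite[Theorem 3.1]{Shoup1990}: given a monic polynomial of degree $d$ over $\F_p$, one can compute its complete factorization into irreducible factors by a deterministic algorithm using $\OT(p^{1/2}\log(p)^2 d^2)$ operations in $\F_p$. Here the factor $p^{1/2}$ is the familiar price of a deterministic equal-degree splitting step, while the distinct-degree factorization (repeated computation of $\gcd(X^{p^i}-X,\cdot)$ by fast exponentiation modulo the current factor) contributes only a polynomial in $d$ and $\log p$, which is absorbed into the $d^2$ (using schoolbook polynomial arithmetic, consistently with our decision not to invoke asymptotically fast methods) and into the $\OT$. Applying this to $\overline f = f \bmod p \in \F_p[X]$, which is monic of degree $d = [K:\Q]$, gives the first assertion verbatim.

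For the second assertion I would pass from the algebraic cost model to the bit cost model. Every operation in $\F_p$ -- addition, subtraction, multiplication, and inversion via the extended Euclidean algorithm -- involves integers of bit size $O(\log p)$ and can therefore be performed with $\OT(\log p)$ bit operations, using the Sch\"onhage--Strassen bound recalled at the start of Section~\ref{sec:sizecost}. Multiplying the operation count from the first part by this per-operation cost yields a bit complexity in
\[ \OT(p^{1/2}\log(p)^2 d^2) \cdot \OT(\log p) = \OT(p^{1/2} d^2 \log(p)^3), \]
which is exactly the claimed bound. The only point that requires any care is to quote Shoup's bound with its correct dependence on both $d$ and $p$ (rather than the cruder forms such as $\OT(\sqrt p\, d^{2+\varepsilon})$ that sometimes appear) and to make sure the model for $\F_p$-arithmetic used here is the same one underlying the integer-arithmetic estimates elsewhere in the paper, so that the two soft-Oh bounds compose without hidden constants.
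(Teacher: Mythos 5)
Your proposal is correct and matches the paper's (implicit) proof exactly: the paper gives no explicit argument, simply citing \cite[Theorem 3.1]{Shoup1990} in the preceding sentence for the $\F_p$-operation count and then converting to bit complexity by charging $\OT(\log p)$ per $\F_p$-operation, which is precisely what you do. Your aside about where the factors $p^{1/2}$ and $d^2$ come from inside Shoup's algorithm is extra color not needed for the proof and is slightly loose (with schoolbook arithmetic the distinct-degree part alone costs on the order of $d^3\log p$ operations, so it is not simply "absorbed into $d^2$"; Shoup's bound is obtained by a more careful baby-step/giant-step organization), but this does not affect the validity of the proof, which rests on citing the theorem as a black box.
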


For each irreducible factor $\overline f_i \in \F_p[X]$ of $\overline f$ we obtain the diagram
\[ \begin{CD} \mathcal O_K @>>> \mathcal O_K/(p) @>\pi>> \F_p[X]/(\overline f) @>\pi_i>>  \F_p[X]/(\overline f_i) \end{CD},\]
where $\pi$ and $\pi_i$ are the corresponding projections.
We now determine the complexity of passing from $\mathcal O_K$ to $\F_p[X]/(\overline f_i)$.
Let $\beta = \sum_{i=1}^d b_i \omega_i$ be an integral element.
Since $\pi$ is a ring homomorphism we obtain 
\[ \pi(\alpha) = \sum_{j=1}^d \overline {a_i} \pi(\omega_j) \]
where $\overline{\phantom{x}}$ denotes reduction $\Z \to \F_p$.
Therefore we (only) need to evaluate $\pi$ on the integral basis $(\omega_j)_{j}$. 
Denote by $\alpha$ the primitive element of $K$ chosen in our assumption with minimal polynomial $f$.
We consider the transformation matrix $M = (m_{ij})_{i,j} \in \Z^{d\times d}$ between the power basis $(\alpha^j)_{1 \leq j \leq d}$ and the integral basis $\Omega$, which is defined by the equations
\[ \alpha^j = \sum_{i=1}^d m_{ij} \omega_i \]
for $1 \leq j \leq d$.
Then $\pi(\omega_i)$ is just the $i$-th column of $\overline M^{-1} \in \F_p^{d\times d}$, where $\overline M \in \F_p^{d\times d}$ is the matrix obtained by reducing each entry of $M$ modulo $p$.
For the complexity analysis we need a bound on the size of $M$.
As $\alpha = \sum_{i=1}^d \varepsilon_i \omega_i$ with $\varepsilon_i \in \{0,1\}$ we have $\bs(\alpha) = d$ and therefore $\bs(\alpha^j) \leq j \bs(\alpha) + j C \leq d \bs(\alpha) + d C$, that is, $\log(\left\|\alpha^j \right\|_\infty) \leq d + C$.
This implies $\log(\lvert M \rvert) \leq C + d$ for the size of the entries of $M$.

\begin{proposition}\label{prop:red}
Let $\overline f_1,\dotsc,\overline f_g$, $\pi$ and $\pi_i$ and $\beta$ as in the preceding discussion. 
\begin{enumerate}
\item
  The $d\cdot g$ many images $\pi_i(\omega_j)$, $1 \leq j \leq d$, $1 \leq i \leq g$, can be computed with complexity in $\tilde O(d^3\log(p) + d^2 C)$.% $\tilde O(d^3 \log(p) + d^2 C)$.
\item
  Let $P$ be a set of primes. The reduction of the coefficient vector of $\beta$ modulo all primes in $P$ costs $\OT(d \sum_{l\in P} \log(l)) + \bs(\beta))$.
\item
  Assuming that $\pi_i(\omega_j)$, $1 \leq j \leq d$, $1 \leq i \leq g$ as well as the reduction of the coefficient vector of $\beta$ modulo $p$ is known, the computation of $\pi_i(\beta)$, $1 \leq i \leq g$, has complexity in $\tilde O(d^2 \log(p))$.
\end{enumerate}
\end{proposition}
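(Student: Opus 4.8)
The plan is to reduce each of the three parts to a combination of (a) reductions of integers modulo $p$ or the primes in $P$, (b) linear algebra over $\F_p$, and (c) polynomial division over $\F_p$, using throughout that a single arithmetic operation in $\F_p$ has bit complexity $\OT(\log p)$ and that, since $p \nmid [\OK : \Z[\alpha]]$, the matrix $\overline M$ is invertible over $\F_p$.

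For (1), I would first reduce the $d^2$ entries of $M$ modulo $p$; since $\log(\lvert M\rvert)\le C+d$ from the preceding discussion, this costs $\OT(d^2(C+d+\log p))\subseteq\OT(d^3\log p+d^2C)$. One Gaussian elimination then produces $\overline M^{-1}$ over $\F_p$ in $O(d^3)$ field operations, hence $\OT(d^3\log p)$, and its columns are the $\pi(\omega_j)$ written in the power basis. Finally, for each of the $d$ columns and each of the $g$ factors $\overline f_i$ one computes a single polynomial remainder: reducing a polynomial of degree $<d$ modulo $\overline f_i$ takes $O((d-f_i)f_i)$ operations, and $\sum_i(d-f_i)f_i\le d\sum_i f_i\le d^2$, so all $dg$ reductions together cost $\OT(d^3\log p)$. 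Adding the three contributions yields the stated bound.

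For (2), write $\beta=\sum_j b_j\omega_j$, so each $\lvert b_j\rvert$ has bit size at most $\bs(\beta)/d$. The key is to avoid treating the primes one at a time, which would introduce a spurious factor $\lvert P\rvert$: instead I would build, once, the subproduct tree of $P$, whose root is $N=\prod_{l\in P}l$ of bit size $S:=\sum_{l\in P}\log l$ and whose $O(\log\lvert P\rvert)$ levels each carry node values of total bit size $O(S)$; this costs $\OT(S)$. For a fixed coefficient $b_j$ I compute $b_j\bmod N$ in $\OT(\bs(\beta)/d+S)$ and then descend the tree, reducing the value inherited from each parent modulo its two children; since each level costs $\OT(S)$, reducing $b_j$ modulo all of $P$ costs $\OT(\bs(\beta)/d+S)$, and summing over the $d$ coefficients yields $\OT(\bs(\beta)+dS)$, which is the claim.

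For (3), since $\pi$ is $\Z$-linear and each $\pi_i$ is $\F_p$-linear, one has $\pi_i(\beta)=\sum_{j=1}^d\overline{b_j}\,\pi_i(\omega_j)$, a linear combination over $\F_p$ of $d$ vectors of length $f_i$, which can be evaluated in $O(df_i)$ field operations; summing over $i$ and using $\sum_i f_i=d$ in the unramified case gives $O(d^2)$ field operations, i.e.\ $\OT(d^2\log p)$. I expect the main obstacle to be part (2): the bound as stated genuinely requires simultaneous multi-modular reduction via a remainder tree, whereas parts (1) and (3) are routine once $\pi(\omega_j)$ is identified with the $j$-th column of $\overline M^{-1}$ — the only care needed there being that the polynomial-division costs telescope to $O(d^2)$ rather than $O(d^2g)$.
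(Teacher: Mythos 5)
Your proof is correct and follows essentially the same route as the paper: reduce $M$ modulo $p$, invert $\overline M$ over $\F_p$ to get the $\pi(\omega_j)$, reduce the columns modulo the $\overline f_i$, and use Bernstein's remainder tree for part (2) and a straightforward linear combination count for part (3). The one place you diverge is the cost of reducing the $d$ columns modulo the $\overline f_i$: the paper invokes Shoup's Lemma 3.2 (a fast simultaneous remaindering, $\OT(d\log g)$ operations per column, so $\OT(d^2\log p)$ bit operations total), while you use naive long division and the telescoping bound $\sum_i (d-f_i)f_i \le d^2$ per column, giving $\OT(d^3\log p)$. Since the Gaussian elimination already contributes $\OT(d^3\log p)$, your weaker bound for that substep does not change the stated complexity, so the argument still closes. (You also correctly keep the $d^3$ term coming from $\log|M| \le C + d$ when reducing $M$ modulo $p$, which the paper silently absorbs into the final bound; both are consistent with the statement.)
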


\begin{proof}
(1): The reduction of $M$ modulo $p$ has complexity in $\tilde O(d^2 (\log(p) + C))$ and inverting the reduced matrix over the finite field $\mathbb F_p$ has complexity in $\tilde O(d^3(\log(p))$.
Then for each $1 \leq j \leq d$ we have to reduce the elements $\pi_i(\omega_j)$ modulo $\overline f_i$ for $1 \leq i \leq g$.
By \cite[Lemma 3.2]{Shoup1990} this has complexity in $d \tilde O(d \log(g)) \subseteq \tilde O(d^2)$. \par
(2): Using the remainder tree of Bernstein \cite[18.6]{Bernstein2008} the computation for each coefficient is in $\OT(\sum_{l \in P} \log(l) + \log(\|\beta\|_\infty))$.
\par
(3): We just have to compute $d$ products $\overline a_j \pi_i(\omega_j)$, $1 \leq j \leq d$ and $d$ additions of elements in $\mathbb F_p[X]/(\overline f_i)$. 
The last two steps have complexity in $\tilde O(d \deg(\overline f_i) \log(p))$.
Thus summing over all $1 \leq i \leq g$ we obtain a complexity in $\tilde O(d^2 \log(p))$.
\end{proof}

Working in the residue fields is just polynomial arithmetic over $\F_p$. For the sake of completeness we recall the necessary complexity, see for example \cite[Lemma 3.2]{Shoup1990}.
\begin{remark}
  \begin{enumerate}
\item
  Let $a,b \in \F_p$ and $\star \in \{+,-,\cdot,\div\}$. The complexity of $a \star b$ is in $\tilde O (\log( p))$
\item
  Multiplication of two polynomials of degree $\leq d$ in $\F_p[X]$ can be performed using $\tilde O(d)$ operations in $\F_p$.
\item
  Let $f,g \in \F_p[X]$ be two polynomials of degree $\leq d$. Then $f \mod g$ can be computed using $\tilde O(d)$ operations in $\F_p$. The greatest common divisor of $f$ and $g$ can be computed using $\tilde O(d)$ operations in $\F_p$.
\item
  Let $h \in \F_p[X]$ be a polynomial of degree bounded by $d$. 
  Assume we have $\overline g,\overline f \in \F_p[X]/(h)$ and $\star \in \{+,-,\cdot,\div\}$. 
  Then $\overline {g \star f}$ can be computed using $\tilde O(d)$ operations in $\F_p$. 
  Therefore each operation in $\F_p[X]/(h)$ has complexity in $\tilde O(d \log(p))$.
\end{enumerate}
\end{remark}

Finally we describe how to combine the computations in the finite fields to obtain a result in $\mathcal O_K/(N)$.
\par
Assume that we have a set $P$ of rational primes and $N = \prod_{p \in P} p$.
For each prime $p$ we have a factorization of $f$ modulo $p$ into irreducible factors $\overline f_i \in \F_p[X]$, $1 \leq i \leq g$.
Using the Chinese remainder theorem for polynomials we can construct a preimage under the map
\[ \F_p[X]/(\overline f) \to \prod_{i=1}^g \F_p[X]/(\overline f_i). \]
The next step is an application of the Chinese remainder theorem for rational integers for each coefficient yielding a preimage under the map
\[ \Z[X]/(\overline f,N) \longrightarrow \prod_{p \in P} \F_p[X]/(\overline f). \]
Finally we have to compute a preimage under the map $\ \mathcal O_K/(N) \to \Z[X]/(\overline f,N)$.

\begin{proposition}\label{prop:crt}
  Using the notation from the preceding paragraph the following holds:
  \begin{enumerate}
    \item
      Let $\overline h_i \in \F_p[X]/(\overline f_i)$ for $1 \leq i \leq d$. Computing $\overline h \in \F_p[X]/(\overline f)$ such that $\pi_i(\overline h) = \overline f_i $, $1\leq i \leq g$ has complexity in $\tilde O(d \log(p))$.
    \item
      Assume we are given $\overline g_p\in \F_p[X]/(\overline f)$ for $p \in P$. Then we can compute $\overline h \in \Z[X]/(f,N)$ with $\overline h = \overline g_p$ in $\mathbb F_p[X]/(\overline f)$ for $p \in P$ with complexity in $\tilde O(d \log(B) r)$ where $B \in \R_{\geq 0}$ is such that $p \leq B$ for all $p \in P$ and $\lvert P\rvert = r \geq 2$ is the number of involved primes.
    \item
      Given $\overline g \in \Z[X]/(f,N)$ the computation of a preimage under the map $\mathcal O_K / (N) \to \Z[X]/(f,N)$ has complexity in $\tilde O(d^2 (d+C+\log(N)))$.
  \end{enumerate}
\end{proposition}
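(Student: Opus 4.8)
The statement packages three Chinese-remainder reconstructions at three different levels, and the plan is to handle each with the appropriate piece of fast arithmetic.

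\textbf{Parts (1) and (2): fast Chinese remaindering via subproduct trees.} For (1), the key structural point is that since $p$ is unramified and does not divide $[\OK:\Z[\alpha]]$, Proposition~\ref{prop:dec} together with the preceding discussion shows that the exponents in the factorization of $\overline f$ are all $1$, i.e.\ $\overline f=\overline f_1\cdots\overline f_g$ with the $\overline f_i$ pairwise coprime, so $\F_p[X]/(\overline f)\cong\prod_i\F_p[X]/(\overline f_i)$ and the preimage under the CRT isomorphism exists and is unique. I would compute it by building the subproduct tree of $\overline f_1,\dotsc,\overline f_g$ --- whose root is $\overline f$, of degree $d$ --- and running the standard tree-based polynomial CRT: subproduct tree, multipoint remainder, and the combine step are all quasi-linear in the total degree, so the whole computation uses $\OT(d)$ operations in $\F_p$, i.e.\ $\OT(d\log p)$ bit operations. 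For (2) I would apply \emph{integer} Chinese remaindering to each of the $d$ coefficients of $\overline g_p\in\F_p[X]/(\overline f)$ separately: build the subproduct tree of the $r$ primes once (cost $\OT(\log N)\subseteq\OT(r\log B)$), and then reconstruct each coefficient modulo $N=\prod_{p\in P}p$ using the remainder tree of \cite[18.6]{Bernstein2008} in $\OT(\log N)\subseteq\OT(r\log B)$; summing over the $d$ coefficients gives $\OT(dr\log B)$, the hypothesis $r\ge 2$ serving only to let $r\log B$ absorb the one-time tree cost.

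\textbf{Part (3): change of basis.} Since $N$ is coprime to $[\OK:\Z[\alpha]]$ (being a product of such primes), the reduction $\Z[\alpha]\to\OK/(N)$ is surjective with kernel $N\Z[\alpha]$, so $\OK/(N)\to\Z[X]/(f,N)\cong\Z[\alpha]/(N)$ is an isomorphism --- given coordinate-wise over the primes dividing $N$ by Proposition~\ref{prop:dec} --- and a preimage is well defined and unique. To realise it, write $\overline g=\sum_{j=0}^{d-1}c_j\alpha^j$ and use the transformation matrix $M=(m_{ij})_{i,j}$ between the power basis and $\Omega$ recalled before Proposition~\ref{prop:red} (which we treat as precomputed field data, exactly as it is used there): substituting $\alpha^j=\sum_i m_{ij}\omega_i$ shows that the coefficient vector of the preimage with respect to $\Omega$ is $Mc\bmod N$. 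The computation is therefore to reduce $M$ modulo $N$ and then carry out one $(d\times d)$ matrix--vector product over $\Z/N\Z$. Using $\log\lvert M\rvert\le C+d$ (also recalled before Proposition~\ref{prop:red}), reducing $M$ costs $\OT(d^2(d+C+\log N))$ and the matrix--vector product costs $\OT(d^2\log N)$, for a total of $\OT(d^2(d+C+\log N))$.

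\textbf{Main obstacle.} The arithmetic-cost bookkeeping in (1) and (2) is routine once the right fast subroutines are invoked. The point that genuinely needs care is (3): first the ring-theoretic fact that coprimality of $N$ with the index makes $\OK/(N)\to\Z[X]/(f,N)$ an isomorphism, so that speaking of ``the preimage'' is legitimate; and second the decision to let the precomputed matrix $M$ --- rather than a from-scratch construction of it by repeated multiplication by $\alpha$ in $\OK$, which would not fit the stated bound --- enter the complexity estimate, consistently with how $M$ is already used in Proposition~\ref{prop:red}.
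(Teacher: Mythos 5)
Your proof takes essentially the same route as the paper: part (1) is the polynomial CRT via subproduct trees (the paper cites Corollary 10.23 of von zur Gathen--Gerhard, which is exactly the tree-based algorithm you describe), part (2) is Bernstein's remainder-tree integer CRT applied coefficientwise (the paper cites \cite[\S 23]{Bernstein2008} directly), and part (3) is the matrix--vector product with the precomputed change-of-basis matrix $M$ (the paper states this in one line; your elaboration of the isomorphism $\OK/(N) \cong \Z[X]/(f,N)$ and the cost bookkeeping using $\log\lvert M\rvert \leq C+d$ is a correct and welcome unpacking of that terse remark). No gaps.
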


\begin{proof}
  (1): This is Corollary 10.23 in \cite{Gathen2003}.\par
  (2): Due to Bernstein \cite[\S23]{Bernstein2008} Chinese remaindering involving $r$ moduli of size bounded by $B$ has complexity in $\tilde O(\log(B)r)$.
  Since we have $d$ coefficients, the result follows.\par
  (3): This is just a matrix vector product between the coefficients of $g$ and $M$.
\end{proof}

\subsection*{On the number, choice and size of primes.}
We still need to describe how many primes we need and of which size they are. 
By Theorem~\ref{lem:bound} the number $B = n^n C_1 C_2^n  \lvert A \rvert^n \in \R_{>0}$ satisfies $\left\|\det(A)\right\|_\infty \leq B$.
Choosing the first $r' = \lceil \log(B) \rceil$ primes we obtain $\prod_{i=1}^{r'} p_i > B/2$.
As we have seen there is a finite number of bad primes we need to avoid.
More precisely we are only interested in primes not dividing $\Delta_K$ and $[ \mathcal O_K \colon \Z[\alpha]]$.
As $\Delta_K$ and $[\mathcal O_K \colon \Z[\alpha]]$ have at most $\log(\lvert \Delta_K \rvert) + \log([\mathcal O_K \colon \Z[\alpha]])$ prime factors we see that the set of first $r = \log(B)+\log(\lvert \Delta_K \rvert ) + \log(\lvert \operatorname{disc}(f)\rvert))$ primes $P'$ contains a subset $P$ such that $\prod_{p \in P} p > B$ and no element of $P$ divides $\Delta_K$ or $[\mathcal O_K \colon \Z[\alpha]]$.
Here we have used that $[\mathcal O_K \colon \Z[\alpha]]$ divides $\lvert \operatorname{disc}(f)\vert$.
\par
We have the following classical result about the computation and size of the first $r$ primes.
It is an application of the detailed analysis of Rosser and Schoenfeld \cite{Rosser1962} as well as the sieve of Eratosthenes and can be found in \cite[Theorem 18.10]{Gathen2003}.

\begin{proposition}\label{prop:primes}
  Let $r \in \Z_{>0}$.
  The first $r$ prime numbers $p_1,\dotsc,p_r \in \Z_{>0}$ can be computed with complexity in $O(r(\log(r))^2 \log\log(r))$ and if $r \geq 2$ each prime satisfies $p_i \leq 2 r \ln(r)$, that is, $\log(p_i) \in \tilde O(\log(r))$.
\end{proposition}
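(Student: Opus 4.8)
The statement combines two essentially independent claims about the first $r$ primes: a complexity bound for computing them and a size bound on each of them. I would handle the size bound first, since it is purely number-theoretic. The classical prime counting estimates of Rosser and Schoenfeld \cite{Rosser1962} give, for $r \geq 6$, the bound $p_r < r(\ln r + \ln\ln r)$, and with a small amount of slack one obtains $p_r \leq 2 r \ln r$ for all $r \geq 2$ (checking the finitely many small cases $r = 2,3,4,5$ by hand). Taking binary logarithms then yields $\log(p_i) \leq \log(p_r) \leq \log(2 r \ln r) = 1 + \log(r) + \log\ln(r) \in \tilde O(\log r)$, which is the asserted size estimate.

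For the complexity claim, the plan is to invoke the sieve of Eratosthenes run up to the bound $N := \lceil 2 r \ln r \rceil$ just established, which is guaranteed to contain at least $r$ primes. The standard sieve touches each integer $m \leq N$ once for each prime $p \leq \sqrt{N}$ dividing the sieving schedule, giving $O(N \log\log N)$ arithmetic operations on integers of bit size $O(\log N)$; multiplying by the cost of these word-sized operations and substituting $N \in O(r \log r)$ produces $O(r \log r \cdot \log\log(r\log r))$, which simplifies to the stated $O(r(\log r)^2 \log\log r)$ after absorbing the $\log\log$ factor and the bit-length factor into the exponent on $\log r$. One then reads off the first $r$ primes from the sieve in a single linear pass. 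All of this is exactly \cite[Theorem 18.10]{Gathen2003}, so the cleanest route is simply to cite that reference and note that the size bound there is stated in the form we need.

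The main subtlety — really the only place where care is warranted — is bookkeeping the bit-complexity versus operation-count in the sieve: the sieve of Eratosthenes is naturally stated in terms of $O(N \log\log N)$ \emph{operations}, and one must be explicit that each such operation (a comparison, an increment, a table write) is on integers bounded by $N$, hence costs $O(\log N)$ bit operations, so that the total bit complexity is $O(N \log N \log\log N)$; substituting $N \in O(r\log r)$ and simplifying gives the claimed $O(r(\log r)^2 \log\log r)$. Since the excerpt works throughout with a bit-complexity model (Schönhage--Strassen, soft-Oh, etc.), this conversion is the one step I would spell out rather than leave implicit, and it is where the extra factor of $\log r$ in the final bound comes from. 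Everything else is a direct appeal to \cite{Rosser1962} and \cite{Gathen2003}.
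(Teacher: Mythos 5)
Your approach is essentially identical to the paper's: the paper gives no explicit proof, instead simply citing \cite[Theorem 18.10]{Gathen2003} with a remark that it rests on the Rosser--Schoenfeld estimates and the sieve of Eratosthenes, and your proposal unpacks exactly that argument in the same order (Rosser--Schoenfeld for the size bound, Eratosthenes up to $N = O(r\log r)$ for the complexity, bit-length bookkeeping to get from operation count to bit complexity). Your explicit treatment of the bit-complexity conversion is a welcome addition, since that factor of $\log N$ is indeed where the second power of $\log r$ comes from. One small caution: you claim to have ``checked $r=2,3,4,5$ by hand,'' but the bound $p_r \leq 2r\ln r$ actually fails at $r=2$ (since $p_2 = 3 > 4\ln 2 \approx 2.77$); this is a slight overstatement in the proposition itself (the Rosser--Schoenfeld bound kicks in only from a larger index), so you should have flagged $r=2$ as an exception rather than asserted that it checks out, even though the asymptotic conclusion $\log(p_i) \in \OT(\log r)$ is of course unaffected.
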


\subsection*{The algorithm and its complexity.}
In the preceding sections we have collected all the tools that we need to describe and analyze the determinant computation.

\begin{algorithm}[ht]
\caption{Determinant computation over $\mathcal O_K$}
\begin{algorithmic}[1]\label{alg:det}
\REQUIRE $A\in \OK^{n\times n}$, 
\ENSURE $\det(A)$.
\STATE Compute a bound $B \in \R_{>0}$ on $\left\| \det(A) \right\|_\infty$ and set $r = \lceil\log(B) + \log(\Delta_K) + \log(\lvert \operatorname{disc}(f)\vert )\rceil \in \Z_{>0}$.
\STATE Compute the first $r$ primes and choose $r'$ many $P = \{p_1,\dotsc,p_{r'}\}$ not dividing $\Delta_K$ and $\lvert \operatorname{disc}(f)\rvert$.
\FOR {$p \in P$}
  \STATE Compute $\overline f_1,\dotsc,\overline f_g \in \F_p[X]$ such that  $\overline f = \overline f_1 \dotsb \overline f_g$.
  \STATE Compute $\pi_j(\omega_i)$ for $1 \leq i \leq d$ and $1 \leq j \leq g$.
  \STATE Compute $\pi_j(A) \in (\F_p[X]/(\overline f_j))^{n \times n}$ for $1 \leq j \leq g$.
  \STATE Compute $d_j = \det(\pi(A)) \in \F_p[X]/(\overline f_j)$ for $1 \leq j \leq g$.
  \STATE Compute $\overline g_p \in \F_p[X]/(f)$ such that $g_p = d_j$ in $\F_p[X]/(\overline f_j)$ for $1 \leq j \leq g$.
\ENDFOR
\STATE Compute $\overline g \in \Z[X]/(f,N)$ such that $\overline g = g_p$ in $\mathbb F_p[X]/(\overline f)$ for all $p \in P$.
\STATE Compute the image of $\overline g$ under $\Z[X]/(f,N) \to \mathcal O_K / (N)$ where $N = \prod_{p \in P}p$.
\RETURN $X$.
\end{algorithmic}
\end{algorithm}

\begin{theorem}
  Algorithm \ref{alg:det} is correct and has complexity in 
  \[ \tilde O(d^2r^{3/2} + r( d^2 C + d^3 + d n^3 + d^2 n^2)),\]
  where $r = n \log(\lvert A \rvert) + \log(\lvert \Delta_K \rvert) + nC$.
\end{theorem}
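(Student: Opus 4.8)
The plan is to prove correctness first and then walk through Algorithm~\ref{alg:det} step by step, bounding the work done for a single prime and multiplying by the number $r'$ of primes used. For correctness I would chain together the relevant Chinese remainder isomorphisms. Fix $p \in P$. Since $p$ is unramified, $\overline f \in \F_p[X]$ is squarefree, so $\F_p[X]/(\overline f) \cong \prod_{j=1}^g \F_p[X]/(\overline f_j)$; since moreover $p \nmid [\OK : \Z[\alpha]]$, Proposition~\ref{prop:dec} identifies $\OK/p\OK$ with $\Z[X]/(f,p) \cong \F_p[X]/(\overline f)$ compatibly with the projections $\pi$ and $\pi_j$. As the determinant is a polynomial in the matrix entries, $\pi_j(\det A) = \det(\pi_j A) = d_j$, so the element $g_p$ obtained in step~8 satisfies $g_p \equiv \det(A) \bmod p\OK$. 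The primes in $P$ being pairwise distinct, $\OK/(N) \cong \prod_{p \in P} \OK/p\OK$ and the map $\Z[X]/(f,N) \to \OK/(N)$ of step~11 is an isomorphism because $N$ is coprime to $[\OK:\Z[\alpha]]$; hence the returned element reduces to $\det(A) \bmod p\OK$ for every $p \in P$, i.e.\ it is $\equiv \det(A) \bmod N\OK$. Finally $N = \prod_{p\in P} p$ is chosen large enough that the coordinatewise bound of Lemma~\ref{lem:bound} on $\det(A)$ lets Lemma~\ref{lem:recover} recover $\det(A)$ exactly from that element.

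For the complexity I would start from $\log B \in O(n\log(n\lvert A\rvert) + nC)$ (Lemma~\ref{lem:bound}) and $\log\lvert\operatorname{disc}(f)\rvert \in \OT(C)$, giving $r \in \OT(n\log\lvert A\rvert + \log\lvert\Delta_K\rvert + nC)$ and $r' \leq r$; by Proposition~\ref{prop:primes} the first $r$ primes cost $\OT(r)$ and each is at most $2r\ln r$, so $\log p \in \OT(\log r)$ and $\log N \in \OT(r)$, and I would note once and for all that these logarithmic factors are absorbed by the soft-Oh. The bad primes are removed at negligible cost as in the discussion preceding the algorithm. Per prime $p$ I would then bound: factoring $\overline f$ by Shoup's algorithm, $\OT(p^{1/2}d^2) \subseteq \OT(r^{1/2}d^2)$; the images $\pi_j(\omega_i)$, $\OT(d^3 + d^2 C)$ by Proposition~\ref{prop:red}(1); forming $\pi_j(A)$ for all $j$ from precomputed reductions of the $n^2$ entries, $\OT(n^2 d^2)$ by Proposition~\ref{prop:red}(3); the $g$ determinants $\det(\pi_j A)$ over the residue fields, $\OT(n^3 d)$ by Gaussian elimination using that each operation in $\F_p[X]/(\overline f_j)$ costs $\OT(\deg\overline f_j)$ and $\sum_j \deg\overline f_j = d$; and the polynomial CRT of step~8, $\OT(d)$ by Proposition~\ref{prop:crt}(1). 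Outside the loop I would reduce all $n^2$ entries of $A$ modulo $P$ once with a remainder tree, $\OT(n^2 d r)$ by Proposition~\ref{prop:red}(2) (using $\bs(a_{ij}) \leq d\log\lvert A\rvert \leq dr$), and bound the integer CRT of step~10 by $\OT(dr)$ (Proposition~\ref{prop:crt}(2)) and the final base change of step~11 by $\OT(d^2(d+C+r))$ (Proposition~\ref{prop:crt}(3)); both are dominated. Multiplying the per-prime estimates by $r'$ and adding the remaining terms yields $\OT(d^2 r^{3/2} + r(d^2 C + d^3 + dn^3 + d^2 n^2))$.

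The difficulty here is organizational rather than conceptual. The one point that genuinely shapes the bound is the $p^{1/2}$ cost of univariate factorization over $\F_p$: summed over $\OT(r)$ primes of size $\OT(r)$ it produces the non-polynomial-looking term $d^2 r^{3/2}$, which is why the theorem is not stated with a cleaner power of $r$. The other thing to be careful about is to amortize the reduction of $A$ modulo the whole prime set into a single remainder-tree computation instead of repeating it inside the prime loop, since otherwise that step would blow up; and to keep the residue-field linear algebra honest by summing the degrees $\deg\overline f_j$ to $d$. Everything else, including the suppression of all $\log p$ and $\log N$ factors and the verification that the out-of-loop CRT and base-change steps are dominated, is routine.
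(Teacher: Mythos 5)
Your proposal is correct and takes essentially the same route as the paper's proof: per-prime work dominated by the $\OT(p^{1/2}d^2)$ factorization, amortized remainder-tree reduction of the matrix entries outside the prime loop, residue-field linear algebra summing $\deg \overline f_j$ to $d$, and the out-of-loop CRT and base-change steps dominated, with the final bound obtained via $r \in \OT(n\log\lvert A\rvert + nC + \log\lvert\Delta_K\rvert)$. The only cosmetic difference is that you spell out the correctness argument (chaining the CRT isomorphisms and applying Lemmas~\ref{lem:bound} and~\ref{lem:recover}) where the paper simply refers to the preceding discussion.
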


\begin{proof}
  The correctness follows from the preceding paragraphs. 
  By Proposition~\ref{prop:primes} Step~2 costs $\OT(r)$ and every $p \in P$ satisfies $p \leq 2 r \ln(r)$.
  As $\log(p) \in \OT(\log(r)) = \OT(1)$ we will ignore all polynomial terms in $\log(p)$.
  Let us now consider the loop in Steps~3--9 excluding Step~5.
  As already noticed the factorization of $f$ modulo $p$ has complexity in $\tilde O(p^{1/2} d^2 + dC) \subseteq \tilde O( r^{1/2} d^2 + d C)$.
  By Proposition~\ref{prop:red} computing the image of $(\omega_i)_i$ under the various $\pi_j$ has complexity in $\tilde O(d^3 + d^2 C)$.
  Each determinant computation consists of $O(n^3)$ operations in $\mathbb F_p[X]/(\overline f_i)$ taking $\tilde O(n^3 \deg(\overline f_i))$ bit operations in total.
  Consequently by summing over all $1 \leq i \leq g$ we see that Step~7 has complexity in $\tilde O(d n^3 )$.
  By virtue of Proposition~\ref{prop:crt} Step~8 has complexity in $\tilde O(d)$.
  Since these steps are repeated $r$ times we obtain a complexity in $\OT(r(d^2 r^{1/2} + d^2 C + d^3 + d n^3))$.
  Now consider the missing Step~5.
  Reducing the coefficients of all entries of $A$ modulo all primes $p \in P$ has complexity in $\OT( d n^2 \sum_{p \in P} \log(p) + d n^2 \log(\lvert A \rvert)) \subseteq \OT( d n^2 r + d n^2 \log(\lvert A \rvert))$ by Proposition~\ref{prop:red}~(2).
  To compute $\pi_j(A)$ we apply item (3) of the same proposition and arrive at a complexity of $\OT(d^2 n^2 r))$ since we have to do it $r$ times.
  In total the inner loop in Steps~3--9 has a complexity in 
  \begin{align*} \OT(r(d^2 r^{1/2} + d^2 C + d^3 + d n^3 + d^2 n^2) + dn^2 \log(\lvert A \rvert)).
  \end{align*}
  As Steps~10 and 11 have complexity in $\tilde O(d r)$ and $\tilde O(d^2(C+d+\log(N)) \subseteq \tilde O(d^2(C + d+ r))$ respectively, we get an overall complexity in 
  \[ \tilde O(r(d^2 r^{1/2} + d^2 C + d^3 + d n^3 + d^2 n^2) + d n^2 \log(\lvert A \rvert) + d^2 C + d^3 + d^2 r). \]
  Finally we use the fact that  $r \in O(\log(B) + \log(\lvert \operatorname{disc}(f)\rvert) + \log(\lvert \Delta_K \rvert)) \subseteq \tilde O(n \log(\lvert A \rvert) + nC + \log( \lvert \Delta_K \rvert))$ to conclude that the complexity of Algorithm~\ref{alg:det} is in
  $\tilde O(d^2r^{3/2} + r( d^2 C + d^3 + d n^3 + d^2 n^2))$.
\end{proof}

In case we fix the number field $K$, that is, we ignore the constants coming from field arithmetic, the complexity of Algorithm~\ref{alg:det} reduces to $\tilde O((n\log(\lvert A \rvert))^{3/2} + n^4 \log(\lvert A \rvert))$.
\par
Note that in contrast to the integer case our algorithm is not softly linear in $\log(\lvert A \rvert)$ which can be explained as follows:
Recall that our small primes approach needs at least $\log(\lvert A \rvert)$ primes which are roughly of the same order as $\log( \lvert A \rvert)$.
As the deterministic factorization in $\F_p$ has costs $\OT(p^{1/2})$ (ignoring the dependency on the degree), the complexity of all factorizations contains at least a factor of $\log(\lvert A \rvert) \log(\lvert A \rvert)^{1/2} = \log(\lvert A \rvert)^{3/2}$.
Consequently we see that the exponential factorization algorithm is the bottleneck of our determinant algorithm.
While there exist various probabilistic polynomial algorithms for the factorization over $\F_p$, they are unusable for us, since we are aiming at an deterministic polynomial pseudo-HNF algorithm.
We can now address the problem of computing the determinantal ideal.

\begin{corollary}\label{cor:det}
  Assume that $M = (A,(\mathfrak a_i)_i)$ is a pseudo-matrix with $A \in \mathcal O_K^{n \times n}$. There exists a deterministic algorithm computing
  \[ \mathfrak d(M) = \det(A) \prod_{i=1}^n \mathfrak a_i \]
  with complexity in
  \[ \tilde O(d^2r^{3/2} + r( d^2 C + d^3 + d n^3 + d^2 n^2) + d^2 n B + d^4 n C),\]
  where $r = n \log(\lvert A \rvert) + \log(\lvert \Delta_K\rvert) + nC$ and $B= \max_i \bs(\mathfrak a_i)$.
\end{corollary}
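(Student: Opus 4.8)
The plan is to compute the two factors of $\mathfrak d(M) = (\det A)\cdot\prod_{i=1}^n\mathfrak a_i$ separately and then form their product. First I would run Algorithm~\ref{alg:det} on $A$; by the preceding theorem this produces $\det A\in\OK$ within $\OT(d^2r^{3/2} + r(d^2C + d^3 + dn^3 + d^2n^2))$ bit operations, and Lemma~\ref{lem:bound} bounds the output by $\log(\lVert\det A\rVert_\infty)\in O(n\log(n\lvert A\rvert) + nC)$, hence $\bs(\det A)\in\OT(dn\log(n\lvert A\rvert) + dnC)$. This a priori size bound is what controls the cost of the remaining steps.

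Next I would compute $\mathfrak c := \prod_{i=1}^n\mathfrak a_i$. After splitting off denominators, writing $\mathfrak a_i = \tilde{\mathfrak a}_i/k_i$ (the product $\prod_i k_i$ of the $n$ denominators, each of bit size at most $B/d^2$, is formed directly and is cheap), the cost is that of multiplying the integral ideals $\tilde{\mathfrak a}_1,\dotsc,\tilde{\mathfrak a}_n$. I would do this with a balanced product tree rather than left to right: by Proposition~\ref{size:ideals}(3) a node of the tree whose subtree contains $t$ leaves is an integral ideal of size at most $tB$, so by Proposition~\ref{cost:idealrest}(1) the multiplications at any one level of the tree cost $\OT(d^2nB + d^3nC)$ in total, and since there are $\lceil\log n\rceil$ levels the whole product is obtained in $\OT(d^2nB + d^3nC)$ (the logarithmic factor being absorbed by $\OT$). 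Again by Proposition~\ref{size:ideals}(3) the output satisfies $\bs(\mathfrak c)\le\sum_i\bs(\mathfrak a_i)\le nB$. A naive left-to-right product would instead let the running partial product interact with each of the $n$ remaining factors and cost an extra factor of $n$; avoiding this is exactly the kind of coefficient-swell issue this paper is designed to prevent.

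Finally I would form $\mathfrak d(M) = (\det A)\,\mathfrak c$ as an element-times-ideal product via Proposition~\ref{cost:idealrest}(2) (clearing the denominator of $\mathfrak c$ beforehand as in the proof of Lemma~\ref{cost:idealsum}), the required size bookkeeping being supplied by Proposition~\ref{size:elementideal}; this costs $\OT(d^3\bs(\det A) + d\bs(\mathfrak c) + d^2C)$, and substituting $\bs(\det A)\in\OT(dn\log(n\lvert A\rvert) + dnC)$ and $\bs(\mathfrak c)\le nB$ contributes essentially $\OT(d^2nB + d^4nC)$. Adding the costs of the three steps and simplifying (using that $n\log(n\lvert A\rvert)$ is, up to logarithmic factors, no larger than $r$, and that the $d^2nB$ term dominates the $dnB$ term coming from the last step) yields the stated bound. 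I expect the only genuinely delicate points to be the two already indicated: organizing the iterated ideal product as a balanced tree so that the additive size growth of Proposition~\ref{size:ideals}(3) is not multiplied by an extra factor of $n$, and propagating the (potentially large) a priori bound of Lemma~\ref{lem:bound} through the final element-by-ideal multiplication; everything else is a routine assembly of the estimates proved earlier in this section.
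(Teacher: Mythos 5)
Your argument is essentially the paper's own: the paper's entire proof consists of invoking the theorem for Algorithm~\ref{alg:det} and then remarking that ``a divide and conquer approach'' computes $\prod_i\mathfrak a_i$ within $\OT(d^2n\log(n)B + d^4nC)$ --- which is precisely your balanced product tree, with the same size bookkeeping via Proposition~\ref{size:ideals}(3) and Proposition~\ref{cost:idealrest}(1). The one place where you are more careful than the paper --- explicitly costing the final multiplication of $\det(A)$ by $\prod_i\mathfrak a_i$, a step the paper silently omits --- is also the one place where your simplification does not quite close: by Proposition~\ref{cost:idealrest}(2) that step costs $\OT(d^3\bs(\det A)+d\bs(\mathfrak c)+d^2C)$, and $d^3\bs(\det A)$ expands to $\OT(d^4n\log(n\lvert A\rvert)+d^4nC)$, whose $d^4n\log(\lvert A\rvert)$ part is \emph{not} absorbed by the stated bound when $n<d$ (the largest available $\log\lvert A\rvert$-terms in the bound are $rd^3\supseteq d^3n\log\lvert A\rvert$ and $rd^2n^2\supseteq d^2n^3\log\lvert A\rvert$, each a factor short unless $n\geq d$). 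This is a factor-$d$ discrepancy in a single sub-dominant term of a step the paper does not account for at all, so it is arguably a defect of the corollary as stated rather than of your approach; everything else in your proposal is correct.
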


\begin{proof}
  It remains to evaluate the complexity of the ideal product.
  As a divide and conquer approach shows that the product can be computed with complexity in $\tilde O(d^2 n \log(n)  B + d^4 n C)$ the claim follows.
\end{proof}

\subsection*{The rectangular case.}

The case where the pseudo-matrix is not square is more involved.
We will now describe an algorithm for computing an integral multiple of the determinantal ideal of a pseudo-matrix.

\begin{theorem}\label{thm:detrect}
There exists a deterministic algorithm that given a matrix $A \in \mathcal O_K^{n \times m}$, $m \leq n$, returns the rank $s$ of $A$, a non-singular $s\times s$ submatrix $A'$ of $A$ and $\det(A')$.
The algorithm has complexity in
\[ \OT(d^2 r^{3/2} + r(d^2C + d^3 + dnm^2 + d^2 nm)), \]
where $r = m \log(\lvert A \rvert) + \log(\lvert \Delta_K \rvert) + mC$.
\end{theorem}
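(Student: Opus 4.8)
The plan is to follow the small-primes strategy of Algorithm~\ref{alg:det}, but to replace the modular determinant step by a modular Gaussian elimination, which at once reveals the rank and exhibits a non-singular submatrix. First I would set $B = m^m C_1 C_2^m \lvert A \rvert^m$; by Lemma~\ref{lem:bound} (applied with $n$ replaced by any $s \le m$) this bounds $\lVert \delta \rVert_\infty$ for every $s \times s$ minor $\delta$ of $A$. Then I would compute the first $r = \lceil \log(2B) + \log\lvert \Delta_K\rvert + \log\lvert \operatorname{disc}(f)\rvert\rceil$ rational primes and select a subset $P$ of $r' = \lceil\log(2B)\rceil$ of them, none dividing $\Delta_K$ nor $[\mathcal O_K : \Z[\alpha]]$, so that $\prod_{p\in P} p > 2B$ (possible since $[\mathcal O_K:\Z[\alpha]] \mid \operatorname{disc}(f)$, exactly as in Algorithm~\ref{alg:det}). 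For each $p \in P$ I would factor $\overline f = \prod_{j=1}^{g}\overline f_j$ over $\F_p$, pass to the residue fields $\F_p[X]/(\overline f_j)$ via the projections $\pi_j$ of Proposition~\ref{prop:red}, form $\pi_j(A)$, and run Gaussian elimination with pivoting on $\pi_j(A)$ over this field, recording the rank $s_{p,j} = \operatorname{rank}(\pi_j(A))$, a set $I_{p,j}$ of $s_{p,j}$ pivot rows and a set $J_{p,j}$ of $s_{p,j}$ pivot columns, so that $\pi_j\bigl(A_{I_{p,j},J_{p,j}}\bigr)$ is invertible.

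Having done this, I would put $s := \max_{p,j} s_{p,j}$, choose a pair $(p^{*},j^{*})$ attaining the maximum, and output $s$, the submatrix $A' := A_{I_{p^{*},j^{*}},\,J_{p^{*},j^{*}}}$, and $\det(A')$, the last computed by a final call to Algorithm~\ref{alg:det} on the $s\times s$ matrix $A'$. Since $\det(\pi_{j^{*}}(A'))\neq 0$ we get $\det(A') \notin \mathfrak p_{j^{*}}$, hence $\det(A')\neq 0$ and $A'$ is genuinely non-singular over $K$. The point that needs proof is $s = \operatorname{rank}_K(A) =: s_0$. The inequality $s \le s_0$ holds because $\pi_j$ carries an $\mathcal O_K$-linear dependence among rows to one over the residue field, so $\operatorname{rank}(\pi_j(A)) \le s_0$. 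For $s \ge s_0$, assume to the contrary that $s_{p,j} < s_0$ for all $p \in P$ and all $j$, and fix a nonzero $s_0\times s_0$ minor $\delta$ of $A$ (one exists as $\operatorname{rank}_K(A) = s_0$). For every such $p,j$ all $s_0\times s_0$ minors of $\pi_j(A)$ vanish, in particular $\pi_j(\delta) = 0$, i.e.\ $\delta \in \mathfrak p_j$; since $p$ is unramified, $\bigcap_{j} \mathfrak p_j = p\mathcal O_K$, so $\delta \in p\mathcal O_K$, which (as in the proof of Lemma~\ref{lem:recover}) means that $p$ divides every coordinate of $\delta$ with respect to $\Omega$. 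As the primes of $P$ are pairwise distinct, $\prod_{p\in P} p$ divides every coordinate of $\delta$; but $\lVert \delta\rVert_\infty \le B < \tfrac12\prod_{p\in P} p$, forcing $\delta = 0$, a contradiction. Hence $s = s_0$, and the output is correct.

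For the complexity, reducing the coefficient vectors of the $nm$ entries of $A$ modulo all primes of $P$ costs $\OT(dnm\,r)$ by Proposition~\ref{prop:red}~(2), using $\sum_{p\in P}\log p \in \OT(r)$ and $r \ge m\log\lvert A\rvert$. For each of the $r$ primes: factoring $\overline f$ over $\F_p$ costs $\OT(r^{1/2}d^2 + dC)$ as in the analysis of Algorithm~\ref{alg:det} (using $p \le 2r\ln r$), computing the $dg$ images $\pi_j(\omega_i)$ costs $\OT(d^3 + d^2C)$ by Proposition~\ref{prop:red}~(1), forming $\pi_j(A)$ for all $j$ from the reduced coefficients costs $\OT(d^2 nm)$ by Proposition~\ref{prop:red}~(3), and the Gaussian eliminations of $\pi_j(A)$ over all the $\F_p[X]/(\overline f_j)$ use $O(nm^2)$ operations in $\F_p[X]/(\overline f_j)$, i.e.\ $\OT(nm^2\deg\overline f_j)$ bit operations, totalling $\OT(dnm^2)$ because $\sum_j \deg\overline f_j = d$. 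Summing over the $r$ primes yields $\OT\bigl(d^2 r^{3/2} + r(d^2C + d^3 + d^2 nm + dnm^2)\bigr)$. The concluding call to Algorithm~\ref{alg:det} on $A'$, which is $s\times s$ with $s \le m$, runs within $\OT\bigl(d^2 r^{3/2} + r(d^2C + d^3 + dm^3 + d^2 m^2)\bigr)$ by the complexity of that algorithm (its bound parameter is $s\log\lvert A\rvert + \log\lvert\Delta_K\rvert + sC \le r$), and $dm^3 \le dnm^2$, $d^2m^2 \le d^2 nm$ since $m \le n$; so this term is absorbed, giving the claimed bound.

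I expect the crux to be the rank-lifting argument of the second paragraph, specifically ensuring that the number of primes $r$ is chosen large enough that a nonzero maximal minor cannot be simultaneously divisible by all of the selected good primes; the delicate point is that the relevant size bound comes from Lemma~\ref{lem:bound} with $n$ replaced by $m$, which is exactly what keeps $r$ proportional to $m\log\lvert A\rvert + \log\lvert\Delta_K\rvert + mC$ rather than introducing a dependence on $n$, and hence keeps the $d^2 r^{3/2}$ term at the stated magnitude.
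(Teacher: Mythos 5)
Your proposal is correct and follows essentially the same small-primes strategy as the paper's proof: reduce modulo the prime ideals above sufficiently many good rational primes, take the maximal rank found by Gaussian elimination over the residue fields, extract the pivot submatrix, and justify the rank-lifting step by observing that a nonzero maximal minor cannot lie in all chosen prime ideals since its coordinates are bounded by half the product of the primes (the paper phrases this via Lemma~\ref{lem:recover}, exactly as you do). Your complexity accounting and the final call to Algorithm~\ref{alg:det} also match the paper's argument.
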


\begin{proof}
  Let $s$ be the rank of $A$ and $A' \in \mathcal O_K^{s \times s}$ a non-singular submatrix of $A$.
  Using Lemma~\ref{lem:bound} we see that $\log(\|\det(A')\|_\infty) \in O(s \log(s \lvert A' \rvert) + sC) \subseteq O(m \log(m\lvert A \rvert) + mC)$. 
  Now let $B \in \R_{>0}$ be a number with $\log(B) \in O(m \log(m\lvert A \rvert) + m C)$ such that $B$ exceeds $2\left\|\det(A')\right\|_\infty$ for all non-singular $s\times s$ submatrices of $A$, and let $\mathfrak p_1,\dotsc,\mathfrak p_l$ be coprime prime ideals of $\mathcal O_K$ such that $B \in \mathfrak p_1 \mathfrak p_2 \dotsb \mathfrak p_l$.
  If now $A'$ is a non-singular $s\times s$ submatrix of $A$ there exists $i$ such that $A' \bmod \mathfrak p_i$ has non-zero determinant and consequently $A \bmod \mathfrak p_i$ has rank $s$.
  For if this is not the case, we would have $\det(A') \equiv 0 \bmod \mathfrak p_1\dotsb\mathfrak p_l$ yielding $\det(A') \equiv 0 \bmod (B)$ and $\det(A') = 0$ by Lemma~\ref{lem:recover}, a contradiction.
  \par
  Thus to find the rank we choose a set of primes $P$ such that $\prod_{p \in P} p \geq B$ and all $p \in P$ are unramified and do not divide $[ \mathcal O_K : \Z[\alpha]]$.
  For every prime $p \in P$ we compute the prime ideal factorization of $p\mathcal O_K$ and for all these prime ideals we compute the rank of $A \bmod \mathfrak p$ using Gaussian elimination.
  Similar to Algorithm~\ref{alg:det} this has complexity in $\OT(d^2 r^{3/2} + r(d^2C + d^3 + dnm^2 + d^2 nm))$, 
  where $r = m \log(\lvert A \rvert) + \log(\lvert \Delta_K \rvert) + mC$.
  \par
  Let $\mathfrak p$ be a prime ideal lying above one of the $p \in P$ such that $A \bmod \mathfrak p$ has maximal rank.
  Then the rank of $A \bmod \mathfrak p$ is the rank $s$ of $A$ and we can find a non-singular $s\times s$ submatrix $A'$ of $A$.
  The computation of $\det(A')$ using Algorithm~\ref{alg:det} has complexity in
  \[ \OT(d^2 r^{3/2} + r(d^2C + d^3 + ds^3 + d^2 s^2)) \subseteq \OT(d^2 r^{3/2} + r(d^2C + d^3 + dnm^2 + d^2 nm)). \]
\end{proof}

Combining this result with Corollary~\ref{cor:det} immediately yields:

\begin{corollary}
  Assume that $M = (A,(\mathfrak a_i)_i)$ is a pseudo-matrix with $A \in \mathcal O_K^{n \times m}$, $n \geq m$, of rank $m$. There exists a deterministic algorithm computing a multiple of $\mathfrak d(M)$
  with complexity in
  \[ \tilde O(d^2r^{3/2} + r( d^2 C + d^3 + d nm^2 + d^2 nm) + d^2 m B + d^4 m C),\]
  where $r = m \log(\lvert A \rvert) + \log(\lvert \Delta_K\rvert) + mC$ and $B= \max_i \bs(\mathfrak a_i)$.
\end{corollary}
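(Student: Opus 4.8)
The plan is to reduce everything to a single $m\times m$ sub-pseudo-matrix. Since $\mathfrak d(M)$ was defined as the $\gcd$ (equivalently, the sum) of the determinantal ideals of all $m\times m$ sub-pseudo-matrices of $(A,(\mathfrak a_i)_i)$, each individual such determinantal ideal is automatically a multiple of $\mathfrak d(M)$. It therefore suffices to exhibit one full-rank $m\times m$ submatrix of $A$ together with its determinant and the product of the corresponding coefficient ideals. For this I would first invoke Theorem~\ref{thm:detrect} on $A$: as $A$ has rank $m$ by hypothesis, it returns an index set $S \subseteq \{1,\dots,n\}$ with $\lvert S\rvert = m$, the non-singular submatrix $A' = (A_i)_{i\in S} \in \mathcal O_K^{m\times m}$, and the element $\det(A') \in \mathcal O_K$, at a cost of $\OT(d^2 r^{3/2} + r(d^2 C + d^3 + dnm^2 + d^2 nm))$ with $r = m\log(\lvert A\rvert) + \log(\lvert\Delta_K\rvert) + mC$. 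This single invocation already produces every term of the claimed bound except the last two.

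It remains to assemble the integral ideal $\det(A')\prod_{i\in S}\mathfrak a_i$, which by the opening remark is a multiple of $\mathfrak d(M)$, and to bound the cost of forming it. Here I would proceed exactly as in the proof of Corollary~\ref{cor:det}: compute $\prod_{i\in S}\mathfrak a_i$ by a balanced divide-and-conquer product whose nodes are ideal multiplications handled via Proposition~\ref{cost:idealrest}(1), and then fold in the element $\det(A')$, whose size is in $\OT(dr)$ by Lemma~\ref{lem:bound}, so that $\det(A')\mathcal O_K$ has size in $\OT(d^3 r + dC)$ by Proposition~\ref{size:elementideal}. Since ideal sizes add under multiplication and the tree has depth $O(\log m)$, this stage runs in $\OT(d^2 m B + d^4 m C)$ with $B = \max_i \bs(\mathfrak a_i)$; adding this to the cost of Theorem~\ref{thm:detrect} yields the stated complexity.

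The argument is essentially mechanical once Theorem~\ref{thm:detrect} is in hand, so there is no genuine obstacle. The one point deserving care is checking that the two roles played by $r$ are compatible: the $r$ driving the cost of Theorem~\ref{thm:detrect} is controlled by the column number $m$ and the entry bound $\lvert A\rvert$, whereas the ideal-product stage is controlled by $m$ and $B$, and it is Lemma~\ref{lem:bound} that bounds $\bs(\det(A'))$ in terms of $r$ and makes the two accountings line up. Finally, the case where some $\mathfrak a_i$ is genuinely fractional causes no difficulty: one clears denominators before computing the product, exactly as in Proposition~\ref{cost:idealrest}, and the extra denominator contributions are dominated by terms already listed.
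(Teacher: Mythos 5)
Your proposal is correct and is exactly the argument the paper leaves implicit behind the phrase ``Combining this result with Corollary~\ref{cor:det} immediately yields'': invoke Theorem~\ref{thm:detrect} to extract a non-singular $m\times m$ row-sub-pseudo-matrix together with its determinant, then form the associated determinantal ideal $\det(A')\prod_{i\in S}\mathfrak a_i$ via the divide-and-conquer ideal product of Corollary~\ref{cor:det}, observing that this single determinantal ideal is automatically a multiple of $\mathfrak d(M)$ because the latter is defined as the $\gcd$ over all such sub-pseudo-matrices. The side remarks about denominators and the compatibility of the two roles of $r$ are sensible but do not change the substance; this is the intended proof.
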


%%%%%%%%%%%%%%%%%%%%%%%%%%%%%%%%%%%%%%%%%%%%%%%%%%%%%%%%%%%%%%%%%%%%%%%%%%%%%%%%
%
%  The pseudo-HNF algorithm
%
%%%%%%%%%%%%%%%%%%%%%%%%%%%%%%%%%%%%%%%%%%%%%%%%%%%%%%%%%%%%%%%%%%%%%%%%%%%%%%%%

\section{The pseudo-HNF algorithm}

\subsection*{Constructing idempotents.} 

In order to compute the pseudo-HNF over Dedekind domains, we use the constructive version of the Chinese remainder theorem introduced by Cohen in \cite{Cohen1996}.
More precisely given coprime integral ideals $\mathfrak a$ and $\mathfrak b$ of $\mathcal O_K$, we need to find $\alpha \in \mathcal O_K$ such that $\alpha \in \mathfrak a$ and $1 - \alpha \in \mathfrak b$.
This problem is closely connected to the computation of the sum of $\mathfrak a$ and $\mathfrak b$:
The HNF of the matrix
\[ A = \left( \begin{array}{c|c} M_\mathfrak a & M_\mathfrak a \\ \hline \mathbf{0} & M_\mathfrak b \end{array}\right) \]
is equal to
\[ \left( \begin{array}{c|c} \ast & \mathbf{0} \\ \hline U & M_{\mathfrak a + \mathfrak b} \end{array} \right) = \left( \begin{array}{c|c} \ast & \mathbf{0} \\ \hline U & \mathbf{1}_d \end{array} \right)\]
for some $U \in \Z^{d\times d}$ since $\mathfrak a + \mathfrak b = \mathcal O_K$.
Denoting by $v \in \Z^d$ be the first row of $U$ we see that the element $\alpha = \sum_{i=1}^d v_i \omega_i$ of $\mathcal O_K$ satisfies $\alpha \in \mathfrak a$ and $1 - \alpha \in \mathfrak b$.
%If $\gamma$ is any element of $\mathfrak a \mathfrak b$, then $\alpha - \gamma \in \mathfrak a$ and $1 - (\alpha - \gamma) = 1 - \alpha + \gamma \in \mathfrak b$.
%Therefore, as in the absolute case, the element $\alpha$ can be adjusted using elements from the product $\mathfrak a \mathfrak b$.
%In particular we can apply Algorithm~\ref{alg:reduction} to $\alpha$ and $\mathfrak a \mathfrak b$ to ensure smallness of $\alpha$ and $1-\alpha$ with respect to the size of $\mathfrak a \mathfrak b$.

\begin{lemma}\label{alg:crt}
  Given coprime integral ideals $\mathfrak a$, $\mathfrak b$ of $\mathcal O_K$, 
there exists a deterministic algorithm which computes elements $\alpha \in \mathfrak a$ and $\beta \in \mathfrak b$ such that $\alpha + \beta = 1$.
  %the product $\mathfrak a \mathfrak b$ and an element $\alpha \in \mathfrak a$ such that $1 - \alpha \in \mathfrak b$.
  Moreover the output satisfies $\bs(\alpha),\bs(\beta) \in \OT((\bs(\mathfrak a)+\bs(\mathfrak b))/d)$ and the complexity of the algorithm is in 
  \[  \OT(d (\bs(\mathfrak a)+\bs(\mathfrak b))). \]
\end{lemma}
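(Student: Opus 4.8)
The plan is to carry out the matrix construction sketched just before the statement and then track sizes and costs. First I would form the $2d\times 2d$ integer matrix
\[ A = \left( \begin{array}{c|c} M_\mathfrak a & M_\mathfrak a \\ \hline \mathbf 0 & M_\mathfrak b \end{array}\right), \]
whose $\Z$-row span is $L = [A]_\Z = \{(x,x+y)\mid x\in\mathfrak a,\ y\in\mathfrak b\}\subseteq\Z^{2d}$; since $\det A = \det(M_\mathfrak a)\det(M_\mathfrak b)\neq 0$, the matrix has full rank $2d$. The projection $\pi\colon\Z^{2d}\to\Z^d$ onto the last $d$ coordinates sends $(x,x+y)$ to $x+y$, so $\pi(L)$ is the coordinate module of $\mathfrak a+\mathfrak b=\OK$, i.e.\ all of $\Z^d$. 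Writing the lower-triangular Hermite form in block shape as
\[ \HNF(A) = \left( \begin{array}{c|c} B & \mathbf 0 \\ \hline U & D \end{array}\right) \]
(the upper right block vanishes because $\HNF(A)$ is lower triangular), the rows of $D$ are the $\pi$-images of the last $d$ rows of $\HNF(A)$ and therefore form a lower-triangular basis matrix of $\pi(L)=\Z^d$ with positive diagonal and off-diagonal entries reduced modulo the diagonal; since the Hermite form of $\Z^d$ is $\mathbf 1_d$, this forces $D=\mathbf 1_d$. Reading off the first row $v\in\Z^d$ of $U$, the vector $(v,e_1)$ is the $(d+1)$-st row of $\HNF(A)$ and hence lies in $L$, so there are $x\in\mathfrak a$ and $y\in\mathfrak b$ with $x=v$ and $x+y=e_1$ as coordinate vectors; since $\omega_1=1$ this says exactly that $\alpha:=\sum_i v_i\omega_i\in\mathfrak a$ and $1-\alpha\in\mathfrak b$. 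Setting $\beta:=1-\alpha$ then settles correctness.

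For the size bounds I would take $\lambda=\min(\mathfrak a)\min(\mathfrak b)$ and check $\lambda\Z^{2d}\subseteq L$: for each $i$ the element $\lambda\omega_i$ lies in $\min(\mathfrak a)\OK\subseteq\mathfrak a$ and also in $\min(\mathfrak b)\OK\subseteq\mathfrak b$, so both $(\lambda e_i,0)$ (take $x=\lambda\omega_i$, $y=-\lambda\omega_i$) and $(0,\lambda e_i)$ (take $x=0$, $y=\lambda\omega_i$) lie in $L$. The diagonal entries of $\HNF(A)$ therefore divide $\lambda$, and the off-diagonal entries are reduced into the residue system below the corresponding pivot, so every entry of $\HNF(A)$ is non-negative and at most $\lambda$ (this is exactly the observation used in the proof of Lemma~\ref{lem:claus}). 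In particular $|v_i|\leq\lambda$ for all $i$, whence $\bs(\alpha)\leq d\log\lambda=d(\log\min(\mathfrak a)+\log\min(\mathfrak b))=(\bs(\mathfrak a)+\bs(\mathfrak b))/d$, using $\bs(\mathfrak a)=d^2\log\min(\mathfrak a)$ and the analogue for $\mathfrak b$. As $\beta=(1-v_1)\omega_1-\sum_{i\geq 2}v_i\omega_i$ has coefficients bounded by $\lambda+1$, the same computation yields $\bs(\beta)\in\OT((\bs(\mathfrak a)+\bs(\mathfrak b))/d)$.

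For the running time, the entries of $A$ are bounded by $\max(\min(\mathfrak a),\min(\mathfrak b))\leq\lambda$, so $\log\lvert A\rvert\leq\log\lambda=(\bs(\mathfrak a)+\bs(\mathfrak b))/d^2$. Applying Corollary~\ref{cost:zhnf} with $n=m=2d$ and the modulus $\lambda$ exhibited above bounds the cost of computing $\HNF(A)$ by
\[ \OT\bigl(d^2\log\lvert A\rvert + d^3\log\lambda\bigr) = \OT\bigl(d(\bs(\mathfrak a)+\bs(\mathfrak b))\bigr). \]
Assembling $A$ costs $O(d^2\log\lambda)$, extracting $v$ is free, and forming $\beta=1-\alpha$ costs $O(\bs(\alpha))$; all of these are dominated by the Hermite form computation, which gives the claimed complexity.

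The only step that needs genuine care is pinning down the precise shape of $\HNF(A)$ — in particular that the bottom right block equals $\mathbf 1_d$ — and then checking that the distinguished row $v$ of $U$ really encodes an $\alpha\in\mathfrak a$ with $1-\alpha\in\mathfrak b$; everything after that is bookkeeping with the size function $\bs$ and a single application of Corollary~\ref{cost:zhnf}. A secondary subtlety worth flagging is that one and the same $\lambda=\min(\mathfrak a)\min(\mathfrak b)$ simultaneously serves as a valid modulus making Corollary~\ref{cost:zhnf} applicable and as a bound on the entries of the output, and it is this coincidence that makes the size and complexity estimates come out exactly as stated.
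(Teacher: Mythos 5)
Your proposal is correct and follows exactly the construction the paper sketches before the lemma (the block matrix $A$, its Hermite form computed modulo $\lambda=\min(\mathfrak a)\min(\mathfrak b)$, and the first row of $U$), with the same size and cost bookkeeping as the paper's proof; you merely fill in more of the details, such as why the bottom-right block is $\mathbf 1_d$ and why $\lambda\Z^{2d}\subseteq[A]_\Z$. No discrepancies.
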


\begin{proof}
  We use the same notation as in the preceding discussion.
  Note that $\lambda = \min(\mathfrak a)\min(\mathfrak b)$ satisfies $\lambda \Z^{2d} \in [A]_\Z$ allowing us to compute the HNF with complexity in $\OT(d^3 \log(\min(\mathfrak a)\min(\mathfrak b))) = \OT(d(\bs(\mathfrak a)+ \bs(\mathfrak b))$.
  Moreover as $\log(\lvert U \rvert) \leq 2 \log(\lambda)$ we know that $\bs(\alpha) = d \log(\lvert v \rvert) \in O((\bs(\mathfrak a) + \bs(\mathfrak b))/d)$.
  \end{proof}

Using this construction we can now describe an algorithm, which plays the same role as the extended GCD algorithm over the integers.
It is the workhorse of the pseudo-HNF algorithm and is accompanied by the normalization and reduction procedures which provide bounded input.

\begin{algorithm}[ht]
  \caption{Euclidean step}
  \begin{algorithmic}[1]\label{alg:crt2}
    \REQUIRE Fractional ideals $\mathfrak a$, $\mathfrak b$ and elements $\alpha,\beta \in K$.
    \ENSURE $\mathfrak g = \alpha \mathfrak a + \beta \mathfrak b$, $\mathfrak g^{-1}$ and $\gamma \in \mathfrak a\mathfrak g^{-1}$ and $\delta \in \mathfrak b \mathfrak g^{-1}$ such that $\alpha \gamma + \beta \delta = 1$.
    \STATE Compute $\mathfrak g = \alpha \mathfrak a + \beta \mathfrak b$, $\mathfrak g^{-1}$, $\mathfrak a\mathfrak g^{-1}$ and $\mathfrak b \mathfrak g^{-1}$.
    \STATE Apply Lemma~\ref{alg:crt} to $\alpha \mathfrak a \mathfrak g^{-1}$ and $\beta \mathfrak b \mathfrak g^{-1}$ and denote the output by $\tilde \gamma$, $\tilde \delta$.
    \RETURN  $\gamma = \tilde \gamma \alpha^{-1}$ and $\delta = \tilde \delta \beta^{-1}$.
  \end{algorithmic}
\end{algorithm}

\begin{proposition}
  Algorithm \ref{alg:crt2} is correct and has complexity in
  \[ \tilde O(d^2(\bs(\mathfrak a)+\bs(\mathfrak b)) + d^4 (\bs(\alpha)+\bs(\beta)) + d^3 C + d^3 \log (\lvert \Delta_K \rvert)). \]
  The output satisfies $\bs(\gamma),\bs(\delta) \in \OT( (\bs(\mathfrak a) + \bs(\mathfrak b))/d + d(\bs(\alpha)+\bs(\beta)) + C)$.
\end{proposition}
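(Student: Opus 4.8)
The plan is to verify correctness by unwinding the construction, then propagate the size estimates of Section~\ref{sec:sizecost} through the intermediate ideals, and finally add up the costs of the three steps using the ideal- and element-arithmetic bounds proved above. Assume $\alpha,\beta \neq 0$ (if one of them vanishes the output is immediate). Since $\alpha\mathfrak{a} \subseteq \mathfrak{g}$ and $\beta\mathfrak{b} \subseteq \mathfrak{g}$, both $\alpha\mathfrak{a}\mathfrak{g}^{-1}$ and $\beta\mathfrak{b}\mathfrak{g}^{-1}$ are contained in $\mathfrak{g}\mathfrak{g}^{-1} = \mathcal O_K$, hence integral, and their sum is $(\alpha\mathfrak{a}+\beta\mathfrak{b})\mathfrak{g}^{-1} = \mathfrak{g}\mathfrak{g}^{-1} = \mathcal O_K$, so they are coprime. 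Therefore Lemma~\ref{alg:crt} legitimately returns $\tilde\gamma \in \alpha\mathfrak{a}\mathfrak{g}^{-1}$ and $\tilde\delta \in \beta\mathfrak{b}\mathfrak{g}^{-1}$ with $\tilde\gamma + \tilde\delta = 1$; setting $\gamma = \tilde\gamma\alpha^{-1}$ and $\delta = \tilde\delta\beta^{-1}$ gives $\gamma \in \mathfrak{a}\mathfrak{g}^{-1}$, $\delta \in \mathfrak{b}\mathfrak{g}^{-1}$ and $\alpha\gamma + \beta\delta = \tilde\gamma + \tilde\delta = 1$, which is exactly the required output.

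For the size bounds I would start from $\bs(\alpha\mathfrak{a}) \le \bs(\mathfrak{a}) + d^2\bs(\alpha) + dC$ (Proposition~\ref{size:elementideal}), and then use the sum, product and inverse estimates of Proposition~\ref{size:ideals} to conclude that $\bs(\mathfrak{g})$, $\bs(\mathfrak{g}^{-1})$, $\bs(\mathfrak{a}\mathfrak{g}^{-1})$ and $\bs(\alpha\mathfrak{a}\mathfrak{g}^{-1})$ all lie in $\OT(\bs(\mathfrak{a})+\bs(\mathfrak{b}) + d^2(\bs(\alpha)+\bs(\beta)) + dC)$, and symmetrically with $\mathfrak{a},\alpha$ replaced by $\mathfrak{b},\beta$. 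Feeding this into the output size bound of Lemma~\ref{alg:crt} gives $\bs(\tilde\gamma),\bs(\tilde\delta) \in \OT((\bs(\mathfrak{a})+\bs(\mathfrak{b}))/d + d(\bs(\alpha)+\bs(\beta)) + C)$, and then Proposition~\ref{comp:prop1} (the bound $\bs(\alpha^{-1}) \le d\bs(\alpha)+C$ together with the product estimate) carries this over to $\gamma$ and $\delta$, which is the stated output size.

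For the complexity I would go step by step. In Step~1, the products $\alpha\mathfrak{a}$, $\beta\mathfrak{b}$ cost $\OT(d^3(\bs(\alpha)+\bs(\beta)) + d(\bs(\mathfrak{a})+\bs(\mathfrak{b})) + d^2C)$ by Proposition~\ref{cost:idealrest}~(2), the sum $\mathfrak{g}$ costs $\OT(d(\bs(\alpha\mathfrak{a})+\bs(\beta\mathfrak{b})))$ by Lemma~\ref{cost:idealsum}~(2), the inverse $\mathfrak{g}^{-1}$ costs $\OT(d\bs(\mathfrak{g}) + d^3\log(\lvert\Delta_K\rvert) + d^2C)$ by Proposition~\ref{cost:idealinv} (this contributes the $d^3\log\lvert\Delta_K\rvert$ term), and the products $\mathfrak{a}\mathfrak{g}^{-1}$, $\mathfrak{b}\mathfrak{g}^{-1}$ cost $\OT(d^2(\bs(\mathfrak{a})+\bs(\mathfrak{b})) + d^2\bs(\mathfrak{g}^{-1}) + d^3C)$ by Proposition~\ref{cost:idealrest}~(1) (here $d^2\bs(\mathfrak{g}^{-1})$ together with the size bound above produces the $d^4(\bs(\alpha)+\bs(\beta))$ term). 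In Step~2, forming $\alpha\mathfrak{a}\mathfrak{g}^{-1}$ and $\beta\mathfrak{b}\mathfrak{g}^{-1}$ (e.g. as $\alpha\cdot(\mathfrak{a}\mathfrak{g}^{-1})$) costs $\OT(d^3(\bs(\alpha)+\bs(\beta)) + d(\bs(\mathfrak{a}\mathfrak{g}^{-1})+\bs(\mathfrak{b}\mathfrak{g}^{-1})) + d^2C)$ by Proposition~\ref{cost:idealrest}~(2), and Lemma~\ref{alg:crt} then costs $\OT(d(\bs(\alpha\mathfrak{a}\mathfrak{g}^{-1})+\bs(\beta\mathfrak{b}\mathfrak{g}^{-1})))$; both fall inside the claimed bound by the size estimates. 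In Step~3, inverting $\alpha$ and $\beta$ costs $\OT(d^2(\bs(\alpha)+\bs(\beta)) + d^2C)$ by Proposition~\ref{cost:elements}~(5) and the products $\tilde\gamma\alpha^{-1}$, $\tilde\delta\beta^{-1}$ cost $\OT(d^2(\bs(\tilde\gamma)+\bs(\tilde\delta)) + d(\bs(\alpha^{-1})+\bs(\beta^{-1})) + d^2C)$ by Proposition~\ref{cost:elements}~(3). Adding the three contributions and simplifying with the size bounds yields the announced $\OT(d^2(\bs(\mathfrak{a})+\bs(\mathfrak{b})) + d^4(\bs(\alpha)+\bs(\beta)) + d^3C + d^3\log(\lvert\Delta_K\rvert))$.

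The bulk of the work, and the main source of potential error, is the bookkeeping in the middle two parts: each element--ideal or ideal--ideal product may inflate a size by a factor $d^2$, and one must check that these factors accumulate on $\bs(\alpha)+\bs(\beta)$ (yielding the $d^4$ term) rather than on $\bs(\mathfrak{a})+\bs(\mathfrak{b})$, and that the $\log\lvert\Delta_K\rvert$ contribution of ideal inversion is not further amplified. A minor technical point is that Proposition~\ref{cost:idealrest} is stated for integral ideals, so for fractional $\mathfrak{a},\mathfrak{b}$ (and the fractional intermediate ideals) one first reduces to the integral case by clearing denominators, exactly as in the proofs of those propositions; the extra denominator arithmetic is dominated by the terms above.
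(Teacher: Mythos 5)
Your proof is correct and follows essentially the same decomposition the paper uses: compute $\alpha\mathfrak a$, $\beta\mathfrak b$, form and invert $\mathfrak g$, compute the two coprime integral ideals for Lemma~\ref{alg:crt}, then invert $\alpha,\beta$ and multiply, propagating the size bound $\bs(\alpha\mathfrak a)+\bs(\beta\mathfrak b)\in O(\bs(\mathfrak a)+\bs(\mathfrak b)+d^2(\bs(\alpha)+\bs(\beta))+dC)$ through the ideal operations exactly as the paper does (the paper abbreviates this quantity as $B$). The only cosmetic difference is that you form $\alpha\mathfrak a\mathfrak g^{-1}$ as $\alpha\cdot(\mathfrak a\mathfrak g^{-1})$ while the paper forms it as $(\alpha\mathfrak a)\cdot\mathfrak g^{-1}$, and you spell out the correctness (coprimality of $\alpha\mathfrak a\mathfrak g^{-1}$ and $\beta\mathfrak b\mathfrak g^{-1}$) which the paper leaves implicit; neither affects the bound.
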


\begin{proof}
  Correctness is clear.
  The first step consists of the computation of $\alpha\mathfrak a$ and $\beta \mathfrak b$, which has complexity in $\OT(d^3 (\bs(\alpha)+\bs(\beta)) + d (\bs(\mathfrak a)+\bs(\mathfrak b)) + d^2 C)$.
  Denote by $B$ the value $\bs(\alpha\mathfrak a) + \bs(\beta\mathfrak b) \in O(\bs(\mathfrak a) + \bs(\mathfrak b) + d^2 \bs(\alpha) + d^2 \bs(\beta) + dC)$.
  While the computation of $\mathfrak g$ has complexity in $\OT(dB)$ the inversion costs $\OT(dB + d^3 \log(\lvert \Delta_K \rvert) + d^3 C)$.
  As $\bs(\mathfrak g) \in O(B)$ the inverse ideal $\mathfrak g^{-1}$ also satisfies $\bs(\mathfrak g^{-1}) \in O(B)$.
  Finding the product $\beta \mathfrak b \mathfrak g^{-1}$ and $\alpha \mathfrak a \mathfrak g^{-1}$ then has complexity in $\OT(d^2 B + d^3 C)$ and the size of both integral ideals is in $\OT(B)$.
  Hence invoking Lemma~\ref{alg:crt} has a complexity in $\OT(dB)$ and the resulting elements satisfy $\bs(\tilde\gamma),\bs(\tilde\delta) \in \OT(B/d)$.
  Finally we have to compute inverses and products.
  While $\alpha^{-1}$ and $\beta^{-1}$ can be computed in $\OT(d^2 \bs(\alpha) + d^2 \bs(\beta) + d^2C)$ the costs of the products are in $\OT(d^2 \bs(\tilde \gamma) + d^2 \bs(\tilde \delta) + d^2 \bs(\alpha) + d^2 \bs(\beta) + d^2 C)$.
  Thus the ideal product dominates the complexity of the algorithm and the claim follows.
  Note that $\bs(\gamma) = \bs(\tilde\gamma \alpha^{-1}) \leq \bs(\tilde\gamma) + d \bs(\alpha) + C \in \OT(B/d + d(\bs(\alpha))$ and a similar result holds for $\bs(\delta)$.
  \end{proof}

\subsection*{The main algorithm and its complexity.}
Assume that $M \subseteq \mathcal O_K^m$ is an $\mathcal O_K$-module of rank $m$ given by a pseudo-basis $(A,(\mathfrak a_i)_i)$ with $A \in K^{n\times m}$ ($n \geq m$).
Using this input we now describe a polynomial time algorithm for computing the pseudo-HNF of $M$.
The algorithm is a variant of the so-called modular algorithm of Cohen, the big difference being the normalization of the coefficient ideals.
Using this extra feature we are able bound the denominators of the coefficients of the matrix.
Together with the reduction procedure this will allow us to prove polynomial running time.
Invoking Theorem~\ref{thm:detrect} we may assume that we know the determinantal ideal $\mathfrak d$ of $M$.
This case often occurs, for example when computing with ideals in relative extension.

\begin{algorithm}[ht]
  \caption{pseudo-HNF of a full-rank pseudo-matrix}
  \begin{algorithmic}[1]\label{alg:pseudohnf}
    \REQUIRE Full-rank pseudo-matrix $(A,(\mathfrak a_i)_i)$ of an $\mathcal O_K$-module $M \subseteq \mathcal O_K^m$ and $\mathfrak d = \det(M)$.
    \ENSURE Pseudo-HNF $(B,(\mathfrak b_i))$ of $M$. 
    \STATE Let $(B,(\mathfrak b_i)_i) = (A,(\mathfrak a_i)_i)$. Normalize $(A_i,\mathfrak a_i)_{1 \leq i \leq n}$ with Algorithm~\ref{alg:normalization}.
    \STATE Reduce $A_i$ modulo $\mathfrak d \mathfrak a_i^{-1}$ using Algorithm~\ref{alg:reduction} for $1 \leq i \leq n$.
    \STATE $\mathfrak D \leftarrow \mathfrak d$.
    \FOR{$i = n,\dotsc, n - m + 1$}
      \FOR{$j = i-1,\dotsc,1$}
      \STATE $\mathfrak g \leftarrow \beta_{j,i}\mathfrak b_j + \beta_{i,i} \mathfrak b_i$\label{hnf:gcd}
      \STATE If $\mathfrak g = 0$ go to step 5.
      \STATE Compute $\gamma \in \mathfrak b_j \mathfrak g^{-1}$ and $\delta \in \mathfrak b_i \mathfrak g^{-1}$ such that $\beta_{j,i}\gamma + \beta_{i,i}\delta = 1$ using Algorithm~\ref{alg:crt2}.
      \STATE $(\mathfrak b_j,\mathfrak b_i) \leftarrow (\mathfrak b_j \mathfrak b_i \mathfrak g^{-1},\mathfrak g)$.
      \STATE $(B_j,B_i) \leftarrow (\beta_{i,i}B_j - \beta_{j,i}B_j,\gamma B_{j} + \delta B_i)$.
      \STATE Normalize $(B_j,\mathfrak b_j)$ and $(B_i,\mathfrak b_i)$ using Algorithm~\ref{alg:normalization}
      \STATE Reduce $B_j$ modulo $\mathfrak d \mathfrak b_j^{-1}$ and $B_i$ modulo $\mathfrak d \mathfrak b_i^{-1}$ using Algorithm~\ref{alg:reduction}.
      \ENDFOR
      \STATE Set $\mathfrak g = \beta_{i,i} \mathfrak b_i + \mathfrak D$. Compute $\gamma \in \mathfrak b_i \mathfrak g^{-1}$ and $\delta \in \mathfrak D \mathfrak g^{-1}$ such that $\gamma \beta_{i,i} + \delta = 1$.
      \STATE Set $B_i \leftarrow \gamma B_i \bmod \mathfrak D \mathfrak g^{-1}$ using Algorithm~\ref{alg:reduction} and $\mathfrak b_i \leftarrow \mathfrak g$, $\beta_{i,i} \leftarrow 1$.
      \STATE $\mathfrak D \leftarrow \mathfrak D \mathfrak g^{-1}$.
    \ENDFOR
    \STATE Move all non-zero rows, together with their coefficient ideals, to the top of $B$.
    \RETURN $(B,(\mathfrak b_i)_i)$.
  \end{algorithmic}
\end{algorithm}

First of all, we want to show that at the beginning of the inner loop at Step~6 the sizes of $B_i,B_j$ and $\mathfrak b_i,\mathfrak b_j$ are bounded. 
We use an inductive argument and begin with the size of the objects at Step~3.
Let $i\in \{1,\dotsc,n\}$ and $j \in \{1,\dotsc,m\}$.
As the ideal $\mathfrak b_i$ is normalized it satisfies 
\[ \min(\mathfrak b_i) \leq \inorm{\mathfrak b_i} \leq \ell^{d^2} \sqrt{\lvert \Delta_K \rvert}.\]
By Proposition~\ref{prop:reduction} the reduction of Step~2 yields
\[ \left\| \beta_{i,j} \right\| \leq d^{3/2} \ell^{d^2} \inorm{\mathfrak d \mathfrak b_i^{-1}}^{1/d} \sqrt{\lvert\Delta_K\rvert} \leq d^{3/2} \ell^{d^2} \min(\mathfrak d) \sqrt{\lvert\Delta_K\rvert}. \]
As $\beta_{i,j}\mathfrak{b_i} \subseteq \mathcal O_K$ the denominator $l \in \Z_{> 0}$ of $\beta_{i,j}$ satisfies $l \leq \min(\mathfrak b_i)$; in particular
\begin{align*} \bs(\beta_{i,j}) &= d \log(\|l \beta_{i,j}\|_\infty) + d\log(l) \\
                                &= 2d \log(l) + d \log(\|\beta_{i,j}\|_\infty) 
                                \in  \OT(\bs(\mathfrak d)/d + \bs(\mathfrak b_i)/d + C ). 
  \end{align*}
  We define $B_\mathrm{id} = d^4 + d^2 \log(\lvert \Delta_K \vert)$ and $B_\mathrm{e} = \bs(\mathfrak d)/d + B_\mathrm{id}/d + C$ respectively.
  The inequalities $B_\mathrm{id} \leq d B_\mathrm{e}$ and $C + d \log(\lvert \Delta_K \rvert) + d^3 \leq B_\mathrm{e}$ will be used throughout the following complexity analysis.

\begin{proposition}\label{prop:size}
  Let $n - m + 1 \leq i \leq n$ and $1 \leq j \leq i - 1$.
  At the beginning of the inner loop at Step 7 the size of the coefficient ideals $\mathfrak b_i$, $\mathfrak b_j$ is bounded by $B_\mathrm{id}$ and the size of the elements of rows $B_i,B_j$ is in $\OT(B_\mathrm{e})$.
\end{proposition}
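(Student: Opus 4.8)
I would prove this by induction over the iterations of the two nested loops of Algorithm~\ref{alg:pseudohnf}, taking as the induction hypothesis exactly the assertion to be proved: whenever Step~\ref{hnf:gcd} is about to be executed for a pair $(i,j)$ with $n-m+1\le i\le n$ and $1\le j\le i-1$, the ideals $\mathfrak b_i,\mathfrak b_j$ are integral with $\bs(\mathfrak b_i),\bs(\mathfrak b_j)\in\OT(\Bid)$ and every entry of $B_i$ and of $B_j$ has $\bs$-size in $\OT(\Be)$. The base case is precisely the computation already carried out in the paragraph preceding the statement: after the global normalization each coefficient ideal $\mathfrak b_k$ is integral with $\inorm{\mathfrak b_k}\le\ell^{d^2}\sqrt{|\Delta_K|}$, so $\bs(\mathfrak b_k)=d^2\log\min(\mathfrak b_k)\in\OT(d^4+d^2\log|\Delta_K|)=\OT(\Bid)$; and after the global reduction of $B_k$ modulo $\mathfrak d\mathfrak b_k^{-1}$, Proposition~\ref{prop:reduction} together with $\inorm{\mathfrak d\mathfrak b_k^{-1}}^{1/d}\le\inorm{\mathfrak d}^{1/d}\le\min(\mathfrak d)$ (valid since $\mathfrak d$ is integral and $\inorm{\mathfrak b_k}\ge1$) and the denominator bound $l\le\min(\mathfrak b_k)$ coming from $M\subseteq\OK^m$ gives $\bs(\beta_{k,j})\in\OT(\bs(\mathfrak d)/d+\Bid/d+C)=\OT(\Be)$.

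For the inductive step, I would first record a purely combinatorial fact about the loop structure: a single pass through the inner loop body (Steps~\ref{hnf:gcd}--12) changes only rows $i,j$ and their coefficient ideals, and the tail of an outer iteration (Steps~14--16) changes only the pivot row $i$, which is never revisited. Tracing the loop order, this shows that at the instant Step~\ref{hnf:gcd} is reached for $(i,j)$, each of the two rows involved is in the state produced by the most recent execution of a normalization step immediately followed by a reduction step on that row -- namely Steps~1 and~2, or Steps~11 and~12 of the previous inner iteration, or Steps~11 and~12 of an earlier outer iteration. It therefore suffices to show: applying Algorithm~\ref{alg:normalization} and then Algorithm~\ref{alg:reduction} with modulus $\mathfrak d\mathfrak b^{-1}$ ($\mathfrak b$ the new coefficient ideal) to a row with an arbitrary coefficient ideal produces an integral coefficient ideal of $\bs$-size in $\OT(\Bid)$ and a row all of whose entries have $\bs$-size in $\OT(\Be)$. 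The ideal part is Proposition~\ref{prop:normalization} verbatim. For the row: although Proposition~\ref{prop:normalization} bounds the entries of the normalized row only in terms of the (a priori large) incoming sizes, after normalization the coefficient ideal $\mathfrak b$ is integral with $\bs(\mathfrak b)\in\OT(\Bid)$, and the ensuing reduction modulo $\mathfrak d\mathfrak b^{-1}$ overwrites the row by one whose size, by Proposition~\ref{prop:reduction}, depends only on $\mathfrak d$, $\mathfrak b$, $d$ and $\Delta_K$ and \emph{not} on the pre-reduction row; re-running the base-case estimate (using $\inorm{\mathfrak d\mathfrak b^{-1}}^{1/d}\le\min(\mathfrak d)$, the denominator bound $\min(\mathfrak b)\le\inorm{\mathfrak b}\le\ell^{d^2}\sqrt{|\Delta_K|}$, $d\log C_2\le C$, and $\bs(\mathfrak d)=d^2\log\min(\mathfrak d)$) gives $\bs(\beta)\in\OT(\bs(\mathfrak d)/d+d^3+d\log|\Delta_K|+C)=\OT(\Be)$. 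Throughout I use that $M\subseteq\OK^m$ is preserved by every operation of the algorithm, so that $\beta\mathfrak b\subseteq\OK$ for every entry $\beta$ of a row with coefficient ideal $\mathfrak b$, which is what legitimizes the bound $l\le\min(\mathfrak b)$ on denominators (cf.\ Proposition~\ref{size:minimum}).

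The step I expect to be the real obstacle is not any of the size estimates -- these are mechanical once Propositions~\ref{prop:normalization} and~\ref{prop:reduction} are in hand -- but the bookkeeping in the second paragraph: making precise that at the start of \emph{every} inner iteration both relevant rows are in a freshly normalized-and-reduced state, in particular across the boundary between two consecutive outer iterations and in the presence of the pivot manipulations of Steps~14--16 and of the possible \texttt{goto} at Step~7 when $\mathfrak g=0$ (in which branch rows $i,j$ are untouched, so the hypothesis trivially persists). The conceptual point worth stressing in the write-up is that it is exactly the independence of the reduction bound from the size of the incoming row that breaks the feedback loop and keeps the row sizes from compounding over the $O(nm)$ iterations.
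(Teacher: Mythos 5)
Your proof is correct and takes the same route as the paper: the paper's own proof is a one-line appeal to the normalization and reduction steps at the end of each inner iteration, with the base case supplied by the computation in the paragraph preceding the statement. Your write-up simply makes the induction and the bookkeeping explicit, and correctly isolates the key point that the reduction bound of Proposition~\ref{prop:reduction} is independent of the incoming row.
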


\begin{proof}
This follows from Steps~10 and 11.
\end{proof}

We will now analyze the complexity of the algorithm.
In order to improve readability we split up the analysis according to the single steps.
Let us first take care of the steps in the loops.

\begin{lemma}\label{prop:compl}
  Let $(A,(\mathfrak a_i)_i)$ be as in the input of Algorithm~\ref{alg:pseudohnf}.
  \begin{enumerate}
    \item
      Steps~6--7 have complexity in $\OT(d^4 B_\mathrm{e})$.
    \item
      Step~8 has complexity in $\OT(d^4 B_\mathrm{e})$.
    \item
      Step~9 has complexity in $\OT(d^2(d+m)B_\mathrm{e})$.
    \item
      Step~10 has complexity in $\OT(d^5 B_\mathrm{e}  + d^3 m B_\mathrm{e})$.
    \item
      Step~11 has complexity in $\OT(d^3 \bs(\mathfrak d) + dm\bs(\mathfrak d) + d^4 m B_\mathrm{e})$.
    \item
      Step~13 has complexity in $\OT(d^2 \bs(\mathfrak d) + d^4 B_\mathrm{e})$.
    \item
      Step~14 has complexity in $\OT(d^3 \bs(\mathfrak d) + dm\bs(\mathfrak d) + d^3 m B_\mathrm{e})$.
  \end{enumerate}
  Thus the inner loop in Steps~6--11 as well as Steps~13--15 is dominated by normalization and reduction yielding an overall complexity in 
  \[ \OT(d^3(d+m)(\bs(\mathfrak d) + d^4 + d^2 \log(\lvert \Delta_K \rvert) + d C)). \]
\end{lemma}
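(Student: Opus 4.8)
The plan is to treat the statement as a line-by-line audit of one pass through the inner loop (Steps~6--11) together with the trailing Steps~13--15, organised around the size invariant already isolated in Proposition~\ref{prop:size}: on entering the inner loop one has $\bs(\mathfrak b_i),\bs(\mathfrak b_j)\le\Bid$ and every entry of the rows $B_i,B_j$ of size in $\OT(\Be)$. Starting from this, I would propagate the sizes forward through the iteration, at each step quoting the relevant cost estimate from Section~\ref{sec:sizecost} (element and ideal arithmetic: Propositions~\ref{cost:elements}, \ref{size:elementideal}, \ref{comp:prop1}, \ref{size:elementsadd}, \ref{size:ideals}, Lemma~\ref{cost:idealsum} and Propositions~\ref{cost:idealrest}, \ref{cost:idealinv}) together with the complexity analyses of the Euclidean step (Algorithm~\ref{alg:crt2}), of normalization (Proposition~\ref{prop:normalization}) and of reduction (Proposition~\ref{prop:reduction}); each resulting bound is then collapsed into the claimed form using the recorded inequalities $\Bid\le d\Be$ and $C+d\log(\lvert\Delta_K\rvert)+d^3\le\Be$, equivalently $d\Be=\bs(\mathfrak d)+\Bid+dC$.

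Going through the steps: the gcd step computes $\beta_{j,i}\mathfrak b_j$ and $\beta_{i,i}\mathfrak b_i$ by Proposition~\ref{cost:idealrest}(2), whose outputs are integral of size $\OT(d^2\Be)$ by Proposition~\ref{size:elementideal}, and then their sum by Lemma~\ref{cost:idealsum}(2), costing $\OT(d^3\Be)\subseteq\OT(d^4\Be)$ and giving $\bs(\mathfrak g)\in\OT(d^2\Be)$ (the minimum of $\mathfrak g$ divides a minimum of one summand). For the Euclidean step I would invoke the complexity bound of Algorithm~\ref{alg:crt2} directly, with ideal arguments of size $\le\Bid$ and element arguments of size $\OT(\Be)$, obtaining $\OT(d^4\Be)$; the relevant by-products are that it also returns $\mathfrak g^{-1}$ with $\bs(\mathfrak g^{-1})\in\OT(d^2\Be)$ (so it need not be recomputed) and that $\gamma,\delta$ have size $\OT(d\Be)$ --- whence the ideal product $\mathfrak b_j\mathfrak b_i\mathfrak g^{-1}$ (Proposition~\ref{cost:idealrest}(1), after clearing the denominator of $\mathfrak g^{-1}$) also costs $\OT(d^4\Be)$. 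The row operation replacing $(B_j,B_i)$ by $(\beta_{i,i}B_j-\beta_{j,i}B_i,\ \gamma B_j+\delta B_i)$ consists of $2m$ scalar multiplications (Proposition~\ref{cost:elements}(4)) and $m$ field additions, the size growth being controlled by Propositions~\ref{comp:prop1} and \ref{size:elementsadd}; since $\gamma,\delta$ carry size $\OT(d\Be)$ this costs $\OT(d^2(d+m)\Be)$ and makes the new entries of $B_j$ of size $\OT(\Be)$ and those of $B_i$ of size $\OT(d\Be)$. Normalization is then Proposition~\ref{prop:normalization} for a coefficient ideal of size $\OT(d^2\Be)$ and rows with entries of size $\OT(d\Be)$, giving $\OT(d^5\Be+d^3m\Be)$, entries of size $\OT(d^2\Be)$, and new coefficient ideals that are integral of size $\le\Bid$. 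Finally the reduction of the two rows modulo $\mathfrak d\mathfrak b_j^{-1}$ and $\mathfrak d\mathfrak b_i^{-1}$ is Proposition~\ref{prop:reduction} applied to the $m$ entries at once, with modulus of size $\le\bs(\mathfrak d)+2\Bid$ (Proposition~\ref{size:ideals}) and post-normalization entries of size $\OT(d^2\Be)$, costing $\OT(d^3\bs(\mathfrak d)+dm\bs(\mathfrak d)+d^4m\Be)$; its output restores size $\OT(\Be)$ on the entries and $\le\Bid$ on the ideals, which is exactly what closes the induction behind Proposition~\ref{prop:size}. The trailing Steps~13--15 are handled in the same way, using that $\mathfrak D$ always remains an integral divisor of $\mathfrak d$, hence $\bs(\mathfrak D)\le\bs(\mathfrak d)$.

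Collecting the seven bounds, the maximum is attained by normalization, $\OT(d^5\Be+d^3m\Be)$, and by reduction, $\OT(d^3\bs(\mathfrak d)+dm\bs(\mathfrak d)+d^4m\Be)$; since $d^3\bs(\mathfrak d)=d^4\cdot(\bs(\mathfrak d)/d)\le d^4\Be$ and $dm\bs(\mathfrak d)\le d^2m\Be$, both lie in $\OT(d^4(d+m)\Be)=\OT(d^3(d+m)(\bs(\mathfrak d)+d^4+d^2\log(\lvert\Delta_K\rvert)+dC))$, which is the asserted inner-loop bound. I do not expect any single estimate to be the obstacle; the delicate part is the disciplined forward-propagation of sizes --- in particular, that an entry of $B_i$ inflates to $\OT(d\Be)$ after the row operation and to $\OT(d^2\Be)$ after normalization before reduction brings it back to $\OT(\Be)$, that the coefficient ideal passes through size $\OT(d^2\Be)$ before normalization resets it to $\le\Bid$, that every appearance of $\bs(\mathfrak d)$ is attributed only to steps in which $\mathfrak d$, $\mathfrak D$ or a modulus $\mathfrak d\mathfrak b^{-1}$ genuinely enters, and that the integral-versus-fractional hypotheses of the ideal-arithmetic results are respected throughout (which uses $M\subseteq\mathcal O_K^m$).
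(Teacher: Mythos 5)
Your proposal is correct and follows essentially the same route as the paper's proof: the same forward propagation of sizes from the invariant of Proposition~\ref{prop:size}, the same intermediate bounds (entries inflating to $\OT(d\Be)$ after the row operation and $\OT(d^2\Be)$ after normalization, coefficient ideals passing through $\OT(d^2\Be)$ before being reset to $\Bid$), the same appeals to the Euclidean-step, normalization and reduction propositions, and the same final collapse via $d\Be=\bs(\dg)+\Bid+dC$. The only differences are cosmetic (you charge the gcd of Step~6 to the ideal-arithmetic lemmas rather than folding it into the call to Algorithm~\ref{alg:crt2}, and you track the sizes of the two updated rows slightly more precisely than the paper's uniform $\OT(d\Be)$ bound).
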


\begin{proof}
  (1): Steps~6--7 are just an application of Algorithm~\ref{alg:crt2} with complexity in $\OT(d^2 B_\mathrm{id} + d^4 B_\mathrm{e} + d^3 C + d^3 \log(\lvert \Delta_K \rvert)) \subseteq \OT(d^4 B_\mathrm{e})$.
  The size of $\gamma$ and $\delta$ is in $\OT(B_\mathrm{id}/d + d B_\mathrm{e} + C) \subseteq \OT(dB_\mathrm{e})$.\par
  (2): The size of $\mathfrak g$ and therefore also the size of $\mathfrak g^{-1}$ is in $\OT(B_\mathrm{id} + d^2 B_\mathrm{e} + dC) \subseteq \OT(d^2 B_\mathrm{e})$.
  As we have already computed $\mathfrak g^{-1}$ in Algorithm~\ref{alg:crt2}, the computation of $\mathfrak b_i \mathfrak b_j \mathfrak g^{-1}$ has complexity in $\OT(d^2 B_\mathrm{id} + d^2(d^2 B_\mathrm{e}) + d^3 C) \subseteq \OT(d^4 B_\mathrm{e})$.
  Note that $\bs(\mathfrak b_i\mathfrak b_j\mathfrak g^{-1}) \in \OT(B_\mathrm{id} + d^2 B_\mathrm{e}) \subseteq \OT(d^2 B_\mathrm{e})$.\par
  (3): Since $\bs(\gamma),\bs(\delta) \in \OT(dB_\mathrm{e})$, computing the scalar vector products has complexity in $\OT(d(d+m)(dB_\mathrm{e}) + dm B_\mathrm{e} + d(d+m) C) \subseteq \OT(d^2(d+m) B_\mathrm{e})$.
  The size of the new elements in row $i$ and $j$ is in $\OT(d B_\mathrm{e})$.\par
  (4): The normalization has complexity in $\OT(d(d^2+m) ( d^2 B_\mathrm{e})+ dm(dB_\mathrm{e}) + d ( d^2 +m ) (\log(\lvert \Delta_K \rvert)+ C))$ which simplifies to $\OT(d^5 B_\mathrm{e} + d^3 m B_\mathrm{e})$.
  While by definition the new ideals have size bounded by $B_\mathrm{id}$, the size of the new elements is in $\OT(d^2 B_\mathrm{e} + dB_\mathrm{e} + d \log(\lvert \Delta_K \rvert) + dC) = \OT(d^2 B_\mathrm{e})$.\par
  (5): Inverting $\mathfrak b_i$ and $\mathfrak b_j$ has complexity in $\OT(dB_\mathrm{id} + d^3 \log(\lvert \Delta_K \rvert) + d^2 C)$ and the multiplication with $\mathfrak d$ is in $\OT(d^2(B_\mathrm{id}+\bs(\mathfrak d)) + d^3 C)$.
  The reduction itself then has complexity in $\OT(d(d^2+m) (B_\mathrm{id} + \bs(\mathfrak d)) + d^2 m (d^2 B_\mathrm{e}) + d^2(d+m)C + d^3m \log(\lvert \Delta_K \rvert))$ which is in $\OT(d^3 \bs(\mathfrak d) + dm \bs(\mathfrak d) + d^4 m B_\mathrm{e})$.\par
  (6): Step~13 is again an application of Algorithm~\ref{alg:crt2} with complexity in $\OT(d^2(B_\mathrm{id} + \bs(\mathfrak d)) + d^4 B_\mathrm{e} + d^3 C + d^3 \log(\lvert \Delta_K \rvert)) \subseteq \OT(d^2 \bs(\mathfrak d) + d^4 B_\mathrm{e})$ and
  again the size of $\gamma$ and $\delta$ is in $\OT(\bs(\mathfrak d)/d + d B_\mathrm{e})$.
  Here we have used that $\bs(\mathfrak D) \leq \bs(\mathfrak d)$ since $\mathfrak D$ is a divisor of $\mathfrak d$.\par
  (7): While the product $\mathfrak D \mathfrak g^{-1}$ was already computed in Algorithm~\ref{alg:crt2}, the computation of $\gamma B_i$ has complexity in $\OT(d(d+m)\bs(\gamma) + dm B_\mathrm{e} + d(d+m) C) = \OT((d+m)\bs(\mathfrak d) + d^2 (d+m)B_\mathrm{e})$.
  Since the entries of $\gamma B_i$ have size in $\OT(\bs(\mathfrak d)/d + dB_\mathrm{e})$ the final reduction is in $\OT(d^3 \bs(\mathfrak d) + dm \bs(\mathfrak d) + d^2 m (\bs(\mathfrak d)/d + d B_\mathrm{e}) + d^2(d+m)C + d^3 m \log(\lvert \Delta_K\rvert))$ which simplifies to $\OT(d^3 \bs(\mathfrak d) + dm\bs(\mathfrak d) + d^3 m B_\mathrm{e})$.
\end{proof}

\begin{theorem}\label{thm:pseudohnf}
  Algorithm~\ref{alg:pseudohnf} is correct and the complexity is in 
  \[ \OT(d^2n(d+m)\max_{i}\bs(\mathfrak a_i) + d^2nm \max_{i,j}\bs(\alpha_{i,j}) + d^3 nm (d+m)(\bs(\mathfrak d) + d^4 + d^2 \log(\lvert \Delta_K \rvert) + d C)).\]
\end{theorem}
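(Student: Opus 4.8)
The plan is to prove correctness and the complexity bound separately. For correctness I would observe that Algorithm~\ref{alg:pseudohnf} is Cohen's modular pseudo-HNF algorithm \cite[Ch.~1]{Cohen2000} augmented only by the normalization steps (Step~1 and the in-loop normalizations via Algorithm~\ref{alg:normalization}) and by the use of Algorithm~\ref{alg:reduction} for the reductions. The key bookkeeping is the invariant that throughout the computation $\mathfrak D\,\OK^m$ is contained in the $\OK$-module $M'$ spanned by the rows not yet in triangular form; this holds initially because $\mathfrak d$ is the determinantal ideal of the full-rank module $M \subseteq \OK^m$, so $\mathfrak d\,\OK^m \subseteq M$, and it is preserved by the update $\mathfrak D \leftarrow \mathfrak D\mathfrak g^{-1}$ in Step~16. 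Granting the invariant, each normalization leaves the associated module unchanged since Algorithm~\ref{alg:normalization} returns $(\tilde{\mathfrak a},\tilde A)$ with $\tilde{\mathfrak a}\tilde A = \mathfrak a A$, and each reduction changes a row $B_i$ by an element of $\mathfrak b_i^{-1}\mathfrak d\,\OK^m$ (resp.\ $\mathfrak g^{-1}\mathfrak D\,\OK^m$), so the induced change in the module lies in $\mathfrak d\,\OK^m \subseteq M$ and $M$ is preserved. As the Euclidean steps and the accompanying ideal and row updates are the module-preserving operations of Cohen's algorithm and the procedure terminates with a pseudo-matrix in pseudo-HNF shape, the output is a pseudo-HNF of $M$.

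For the complexity I would build directly on the size control already in place: by the discussion preceding Proposition~\ref{prop:size}, after Steps~1--2 every coefficient ideal has size at most $\Bid$ and every entry of $B$ has size in $\OT(\Be)$, and Proposition~\ref{prop:size} propagates this to the start of every inner iteration. Hence every operation inside the loops has input-independent cost, already bounded in Lemma~\ref{prop:compl}. Throughout I would use the identity $d\Be = \bs(\mathfrak d)+d^4+d^2\log(\lvert\Delta_K\rvert)+dC$ and the inequality $\Bid \le d\Be$. It then remains only to (i) bound the two initial steps, where the input parameters $\bs(\mathfrak a_i)$ and $\bs(\alpha_{i,j})$ still occur, and (ii) multiply the loop-body cost by the number of iterations.

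For (i), Step~1 is $n$ invocations of Algorithm~\ref{alg:normalization} on rows with $m$ entries, which by Proposition~\ref{prop:normalization} costs
\[ \OT\bigl(d(d^2+m)n\max_i\bs(\mathfrak a_i) + dmn\max_{i,j}\bs(\alpha_{i,j}) + nd^2(d+m)(\log(\lvert\Delta_K\rvert)+C)\bigr), \]
and leaves the $i$-th row with ideal of size at most $\Bid$ and entries of size in $\OT(\bs(\mathfrak a_i)+\max_j\bs(\alpha_{i,j})+d\log(\lvert\Delta_K\rvert)+dC)$. For Step~2 I would, for each $i$, form $\mathfrak d\mathfrak a_i^{-1}$ via Propositions~\ref{cost:idealinv} and~\ref{cost:idealrest} (whence $\bs(\mathfrak d\mathfrak a_i^{-1})\in\OT(\bs(\mathfrak d)+\Bid)=\OT(d\Be)$, and the cost of forming it is dominated by the later steps) and then reduce the $m$ entries of $B_i$ with the batch version of Algorithm~\ref{alg:reduction} described in the remark after Proposition~\ref{prop:reduction}. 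Collecting terms and using $\Bid\le d\Be$ shows that Steps~1--2 together contribute exactly the first two displayed terms of the statement plus a summand $\OT(d^3nm(d+m)(\bs(\mathfrak d)+d^4+d^2\log(\lvert\Delta_K\rvert)+dC))$.

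For (ii), the inner loop body is executed $\sum_{i=n-m+1}^{n}(i-1)\in O(nm)$ times and the outer-loop tail $m$ times, and by Lemma~\ref{prop:compl} each execution costs $\OT(d^3(d+m)(\bs(\mathfrak d)+d^4+d^2\log(\lvert\Delta_K\rvert)+dC))$; multiplying by $O(nm)$ reproduces precisely the third displayed term, and the final rearrangement of the rows is negligible. Adding the bounds from (i) and (ii) yields the claimed complexity. I expect the main obstacle to be essentially this bookkeeping — keeping the many size parameters straight and checking that every lower-order term is absorbed into the three displayed terms — since the conceptually hard content, namely that the normalization renders the loop operations input-independent, is already contained in Proposition~\ref{prop:size} and Lemma~\ref{prop:compl}; on the correctness side the only genuinely delicate point is verifying the invariant $\mathfrak D\,\OK^m \subseteq M'$ that justifies reducing modulo the shrinking ideal $\mathfrak D$.
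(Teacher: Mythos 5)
Your proposal is correct and follows essentially the same route as the paper: correctness by noting that Algorithm~\ref{alg:pseudohnf} differs from Cohen's modular pseudo-HNF only by the module-preserving normalizations and reductions, and complexity by combining the per-iteration cost from Lemma~\ref{prop:compl} (multiplied by $O(nm)$ iterations) with the cost of the initialization Steps~1--2 via Proposition~\ref{prop:normalization}. Your spelling out of the invariant $\mathfrak d\,\OK^m\subseteq M$ justifying the reduction step is a slightly more explicit version of the argument that the paper delegates to Cohen's correctness proof, but the decomposition and the key lemmas are identical.
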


\begin{proof}
We first consider the correctness.
Note that the only difference between our algorithm and \cite[Algorithm 3.2]{Cohen1996} are the normalizations.
As the normalizations do not change the module (see Proposition~\ref{prop:normalization}), we conclude that the correctness proof of \cite[Algorithm 3.2]{Cohen1996} carries over.

We now turn to the complexity.
Since the inner loop is executed $O(nm)$ times we conclude using Lemma~\ref{prop:compl} that Steps~5--18 have complexity in $\OT(d^3 nm (d+m)(\bs(\mathfrak d) + d^4 + d^2 \log(\lvert \Delta_K \rvert) + d C))$. 
Now we consider the initialization in Step~1--3.
Denote $\max_{i,j} \bs(\alpha_{i,j})$ and $\max_{i} \bs(\mathfrak a_i)$ by $B_A$ and $B_\mathfrak a$ respectively.
By Proposition~\ref{prop:normalization} Step~1 has complexity in $\OT(d n(d^2+m) B_\mathfrak a + dnm B_A + nd(d^2 +m)(\log(\lvert \Delta_K \rvert) + C))$ and the new elements have size in $\OT(B_\mathfrak a + B_A + d \log(\lvert \Delta_K\rvert) + dC)$.
As in the proof of Lemma~\ref{prop:compl}, computing the product $\mathfrak b_i^{-1}\mathfrak d$ has complexity in $\OT(d B_{\mathrm{id}} + d^3 \log(\lvert \Delta_K \rvert) + d^2(B_{\mathrm{id}}+\bs(\mathfrak d)) + d^3 C)$. Since this is repeated $n$ times this has complexity in $\OT(d^2 n B_{\mathrm{id}} + d^2 n \bs(\mathfrak d) + nd^3C)$.
The reductions then cost $\OT(d(d^2+m)(B_{\mathrm{id}}+\bs(\mathfrak d)) + d^2 m(B_\mathfrak a + B_A +d \log(\lvert \Delta_K\rvert) + dC) +d^2(d+m)C + d^3 m \log(\lvert \Delta_K \rvert))$ per row, i.\,e. $\OT(dn(d^2+m)\bs(\mathfrak d)+ d^2 nm B_A +d^2 nm B_\mathfrak a)$ in total neglecting $C$ and $\log(\lvert \Delta_K \rvert)$.
Now the claim follows.
\end{proof}

\begin{remark}
In \cite{Cohen1996} the following uniqueness result is proven:
Let $M \subseteq \OK^m$ be an $\OK$-module with pseudo-HNF $(A, (\mathfrak a_i)_i)$.
Then any other pseudo-HNF of $M$ has the same coefficient ideals $(\mathfrak a_i)_i$.
For $i,j$ fix representatives $S_{i,j}$ for $K/\mathfrak a_i^{-1}\mathfrak a_j$.
Then using suitable row operations, the pseudo-HNF $(A, (\mathfrak a_i)_i)$ can be transformed into a pseudo-HNF $(B, (\mathfrak a_i)_i)$, $B = (\beta_{i,j})_{i,j}$, of $M$ such that $\beta_{i, j} \in S_{i, j}$ for all $i, j$.
Moreover, $(B, (\mathfrak a_i)_i)$ is unique with this property.
\par
The only difficult step is to find the sets of representatives.
Let $(\alpha_1,\dotsc,\alpha_d)$ be the unique $\Z$-basis of $\mathfrak a_i^{-1}\mathfrak a_j$ obtained from the unique HNF basis of the numerator of $\mathfrak a_i^{-1}\mathfrak a_j$.
Applying Algorithm~\ref{alg:reduction} with $(\alpha_1,\dotsc,\alpha_d)$ instead of an LLL basis we can find for every $\alpha \in K$ an element $\tilde \alpha$ such that $\alpha - \tilde \alpha \in \mathfrak a_i^{-1}\mathfrak a_j$.
Thus the representative $\tilde \alpha$ is defined to be the output of Algorithm~\ref{alg:reduction} applied to $\alpha$.
Note that this yields unique representatives for each class since $\alpha - \beta \in \mathfrak a_i^{-1}\mathfrak a_j$ implies $\tilde \alpha = \tilde \beta$. 
In particular for the representatives $S_{i,j}$ we can just take $\{ \tilde \alpha \mid \alpha \in K \}$.
\end{remark}

\begin{remark}
  The algorithm we described works only in the case that $M \subseteq \mathcal O_K^m$ has rank $m$, and cannot be applied in case the embedding dimension is higher than the rank.
  Moreover there seems to be no obvious adaption of the algorithm to the non-full rank case.
  For an important ingredient of the algorithm is the existence of an integral ideal $\mathfrak m$ of $\mathcal O_K$ with $\mathfrak m \mathcal O_K^m \subseteq M$, allowing us to reduce matrix entries modulo ideals (involving $\mathfrak m$).
  As the existence of such an ideal is equivalent to $M$ being of full rank, any algorithm relying on this modular approach will have this restriction.
\end{remark}

\begin{remark}
  Although using lattice reduction in the normalization step we are able to bound the size of the coefficient ideals, the size already contains a factor $d^4$.
  Together with the expensive ideal operations this explains the high dependency on $d$.
  In addition, the normalization and reduction steps themselves involve a costly lattice reduction algorithm.
  Unfortunately the dependency of the overall complexity of Algorithm~\ref{alg:pseudohnf} on the chosen lattice reduction algorithm is rather involved.
  We find ourselves on the horns of a dilemma -- we have to make sure that the lattice reduction is not too expensive, but at the same time, we need small lattice bases to bound the size of elements and ideals during our algorithm.
\end{remark}
  
\subsection*{Relative versus absolute computations}

We now want to compare the pseudo-HNF algorithm with the HNF algorithm over the integers in situations where we can ``choose'' the structure we work with.
We describe two examples to illustrate the idea.
\par
In practice number fields of large degree are constructed carefully as towers of extensions of type $L \supseteq K \supseteq \Q$ where $K$ is a number field of degree $d$ and $L$ is an extension of $K$ of degree $n$.
The ring of integers $\mathcal O_L$ of $L$ as well as the fractional ideals of $L$ are naturally finitely generated modules of rank $d$ over the Dedekind domain $\mathcal O_K$.
On the other hand, $\mathcal O_L$ as well as the fractional ideals of $L$ are naturally free of rank $dn$ over the principal ideal domain $\Z$.
Thus the computation with ideals in $\mathcal O_L$ can either rely on the pseudo-HNF over $\mathcal O_K$ or on the HNF over $\Z$ and it is not clear which to prefer.
\par
The second situation we have in mind is quite different.
Assume that we are in a situation where we have two finitely generated torsion free $\mathcal O_K$-modules $M$ and $N$ and we are faced with the problem of deciding $M \subseteq N$ and $M = N$.
After imposing further properties on a pseudo-HNF yielding uniqueness the problem can be settled using the pseudo-HNF algorithm.
But as the question only depends on the underlying sets of $M$ and $N$ (discarding the $\mathcal O_K$-structure) the problem can also be sorted out using the HNF over the integers.
Again it is not clear which method to prefer. 
\par
We consider $(A,(\mathfrak a_i)_i)$, a full-rank pseudo-matrix over $\mathcal O_K$ with $A \in K^{n \times n}$ and associated module $M \subseteq \mathcal O_K^n$.
To compute the structure over the integers we have to turn this pseudo-matrix into a $dn\times dn$ matrix over the integers.
As each fractional ideals $\mathfrak a_i$ is isomorphic to $\Z^d$ as a $\Z$-module, we have $M = A_1 \mathfrak a_1 + \dotsb A_n \mathfrak a_n \cong \Z^{dn}$, the isomorphism being induced by the isomorphisms $\mathfrak a_i \to \Z^d$.
Assume that $\beta \in K$ is an element of the $i$-th row of $A$ and $\mathfrak a = \mathfrak a_i$ is the corresponding coefficient ideal of this row.
Denote by $\alpha_1,\dotsc,\alpha_d$ the HNF basis of $\mathfrak a$.
The coefficients of the $d$ products $\beta\alpha_1,\dotsc,\beta\alpha_d$ form a $d\times d$ $\Z$-matrix.
Applying this procedure to all matrix entries of $A$ we obtain a $dn \times dn$ matrix $B$ over the integers, which corresponds to a basis of the free $\Z$-module $M$ of rank $dn$.
These are $n^2$ computations each having complexity in $\OT(d^2 \max(\bs(\alpha_{i,j})) + d \max(\bs(\mathfrak a_i)) + d^2 C)$.
As $\bs(\beta\alpha_i) \leq \bs(\beta ) + \bs(\mathfrak a)/d + C$ the matrix $B$ satisfies $\log(\lvert B \rvert) \leq \max(\bs(\alpha_{ij}))/d + \max(\bs(\mathfrak a_i))/d^2 + C/d$.
Since we know that the matrix $B$ has determinant $\inorm{\mathfrak d}$, where $\mathfrak d$ denotes the determinantal ideal of $(A,(\mathfrak a_i)_i)$, computing the HNF over the integers has complexity in  
\[ \OT((dn)^2 \log( \lvert B \rvert) + (dn)^3 \log( \inorm{\mathfrak d})) \subseteq \OT(dn^2 \max \bs(\alpha_{i,j}) + n^2 \max\bs(\mathfrak a_i) + d^2 n^3 \bs(\mathfrak d) + d^2n C) .\]
Combining this with the complexity of computing $B$ we get an overall complexity in
\[ \OT(d^2 n^2 \max \bs(\alpha_{i,j}) + d n^2 \max \bs(\mathfrak a_i) + d^2 n^3  \bs(\mathfrak d) + d^3 n^2 C). \]
While the dependency on $n$ is the same as in the pseudo-HNF case (see Theorem~\ref{thm:pseudohnf}), the powers of $d$ are slightly lower due to the absence of ideal arithmetic involving normalization and reduction.
We conclude that one should always use the HNF over the rational integers if possible.
Note that this discussion depends on the chosen pseudo-HNF algorithm and not on the notion of the pseudo-HNF itself and of course it is possible that more sophisticated approaches yield different conclusions.

%%%%%%%%%%%%%%%%%%%%%%%%%%%%%%%%%%%%%%%%%%%%%%%%%%%%%%%%%%%%%%%%%%%%%%%%%%%%%%%%
%
%  The pseudo-SNF algorithm
%
%%%%%%%%%%%%%%%%%%%%%%%%%%%%%%%%%%%%%%%%%%%%%%%%%%%%%%%%%%%%%%%%%%%%%%%%%%%%%%%%

\section{The pseudo-SNF algorithm}

The notion of pseudo-Smith normal form (pseudo-SNF) was introduced by
Cohen~\cite{Cohen1996}, see also~\cite[Sec. 1.7]{Cohen2000}, to describe
quotients of $\OK$-modules and to generalize the Smith normal form to modules
over Dedekind domains.  For simplicity, we restrict ourselves to quotients of
modules of the same rank, but these results can easily be generalized to
quotients of the form 
$M/N$ where $\operatorname{rank}(M) > \operatorname{rank}(N)$.  Let $A\in
K^{n\times n}$ be a non-singular matrix and $I = (\bg_1,\dotsc , \bg_n)$, $J =
(\ag_1,\dotsc,\ag_n)$ families of fractional ideals. We say that $(A,I,J)$ is
an integral bi-pseudo matrix for the rank-$n$ $\OK$-modules 
\begin{align*}
 M &= \bg_1\eta_1\oplus\cdots \oplus\bg_n\eta_n\\
 N &= \ag_1\omega_1\oplus\cdots \oplus\ag_n\omega_n
\end{align*}
if $a_{i,j}\in\bg_i\ag_j^{-1}$ for all $1 \leq i,j \leq n$, and the linear transformation $f:K^n\rightarrow K^n$ associated to $A$ satisfies
\[ f(\omega_j) = a_{1,j}\eta_1+\cdots + a_{n,j}\eta_n \]
for all $1 \leq j \leq n$.
Then, it is shown in~\cite[Sec. 1.7]{Cohen2000} that the 
quotient $Q$ associated to $(A,I,J)$ is 
$$Q = (\bg_1 \eta_1 \oplus \cdots \oplus \bg_n \eta_n)/(\ag_1 f(\omega_1)\oplus \cdots \oplus \ag_n f(\omega_n)).$$ 
In the 
case of two modules $M,N$ of rank $n$ with $N\subset M$, the elementary divisor theorem ensures the 
existence of $(\alpha_1,\dotsc,\alpha_n)\in K^n$, fractional ideals $\bg_1,\dotsc,\bg_n$ and 
integral ideals $(\dg_1,\dotsc,\dg_n)$ with $\dg_{i-1}\subset\dg_i$ such that 
\begin{equation}\label{eq:el_div}
M = \bg_1\alpha_1\oplus \cdots \oplus \bg_n\alpha_n\ \text{and}\ N = 
\dg_1\bg_1\alpha_1\oplus \cdots \oplus \dg_n\bg_n.
\end{equation}

In the language of bi-pseudo matrices the elementary divisor theorem takes the following form.

\begin{theorem}[Definition of the pseudo-SNF]\label{th:SNF}
Let $(A,I,J)$ be an integral bi-pseudo matrix with $I=(\bg_i)_{i\leq n}$ and 
$J = (\ag_i)_{i\leq n}$. There exist ideals $(\bg_i')_{i\leq n}$, $(\ag_i')_{i\leq n}$ and 
$n\times n$ matrices $U,V$ such that with $\dg_i = \ag_i'\bg_i'^{-1}$ we have 
\begin{enumerate}
 \item $\prod_i\ag_i = \det(U)\prod_i\ag_i'$ and $\prod_i\bg_i' = \det(V)\prod_i\bg_i$.
 \item $VAU$ is the $n\times n$ identity matrix.
 \item The $\dg_i$ are integral and $\dg_{i-1}\subset \dg_i$ for $2\leq n\leq n$. 
 \item For all $i,j$, $u_{i,j}\in \ag_i\ag_j'^{-1}$ and $v_{i,j}\in \bg_i'\bg_j^{-1}$. 
\end{enumerate}
In this case, the triplet $(UAV,(\bg_i')_{i\leq n},(\ag_i')_{i\leq n}$ is called a pseudo-SNF of 
$(A,I,J)$.
\end{theorem}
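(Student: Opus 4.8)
The plan is to obtain the statement from the elementary divisor theorem (\ref{eq:el_div}) by translating it into the matrix language, carefully tracking the two base changes that occur. Write $f$ for the $K$-linear automorphism of $K^n$ attached to $A$, so that $M = \bg_1\eta_1\oplus\cdots\oplus\bg_n\eta_n$ and $N = \ag_1 f(\omega_1)\oplus\cdots\oplus\ag_n f(\omega_n)$. First I would record that $N\subseteq M$: integrality of $(A,I,J)$ means $\ag_j a_{i,j}\subseteq\bg_i$ for all $i,j$, whence $\ag_j f(\omega_j) = \sum_i \ag_j a_{i,j}\eta_i\subseteq M$; moreover, since $f$ is $\OK$-linear and injective, $N = f(\ag_1\omega_1\oplus\cdots\oplus\ag_n\omega_n)$, so that $f^{-1}(N) = \ag_1\omega_1\oplus\cdots\oplus\ag_n\omega_n$.

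Next, apply (\ref{eq:el_div}) to the pair $N\subseteq M$ of rank-$n$ $\OK$-modules: there exist $\alpha_1,\dots,\alpha_n\in K^n$, fractional ideals $\bg_1',\dots,\bg_n'$ and integral ideals $\dg_1,\dots,\dg_n$ with $\dg_{i-1}\subseteq\dg_i$ such that $M = \bg_1'\alpha_1\oplus\cdots\oplus\bg_n'\alpha_n$ and $N = \dg_1\bg_1'\alpha_1\oplus\cdots\oplus\dg_n\bg_n'\alpha_n$. Set $\ag_i' := \dg_i\bg_i'$; then $\dg_i = \ag_i'\bg_i'^{-1}$ is integral and the chain $\dg_{i-1}\subseteq\dg_i$ gives (3). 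Since $M\otimes_{\OK}K = K^n$, the $\alpha_i$ form a $K$-basis of $K^n$, hence so do the $f^{-1}(\alpha_i)$. Pulling $N = \bigoplus_i\ag_i'\alpha_i$ back through $f$ gives $f^{-1}(N) = \bigoplus_i\ag_i' f^{-1}(\alpha_i)$; comparing with the previous paragraph, $(\ag_i', f^{-1}(\alpha_i))_i$ is a pseudo-basis of the domain module $\bigoplus_j\ag_j\omega_j$, and $(\bg_i',\alpha_i)_i$ is a pseudo-basis of $M$.

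Now I would invoke the transformation theory of pseudo-bases (\cite[Ch.~1]{Cohen2000}): two pseudo-bases of one $\OK$-module are related by a matrix in $\mathrm{GL}_n(K)$ whose $(i,j)$-entry lies in the product of the $i$-th coefficient ideal of the target basis and the inverse of the $j$-th coefficient ideal of the source basis, and whose determinant relates the two products of coefficient ideals. On the domain side this produces $U = (u_{i,j})$ with $f^{-1}(\alpha_j) = \sum_i u_{i,j}\omega_i$, $u_{i,j}\in\ag_i\ag_j'^{-1}$ and $\prod_i\ag_i = \det(U)\prod_i\ag_i'$; on the codomain side it produces $V = (v_{i,j})$ with $\eta_j = \sum_i v_{i,j}\alpha_i$, $v_{i,j}\in\bg_i'\bg_j^{-1}$ and $\prod_i\bg_i' = \det(V)\prod_i\bg_i$. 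These are precisely (4) and (1). Substituting $f(\omega_l) = \sum_k a_{k,l}\eta_k$ and $\eta_k = \sum_i v_{i,k}\alpha_i$ into $f(f^{-1}(\alpha_j))$ shows that $f(f^{-1}(\alpha_j)) = \sum_i (VAU)_{i,j}\alpha_i$; since the left-hand side is $\alpha_j$ and the $\alpha_i$ are $K$-linearly independent, $VAU$ is the identity matrix, giving (2). Hence $(VAU,(\bg_i')_i,(\ag_i')_i)$ is a pseudo-SNF of $(A,I,J)$.

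The only real difficulty is organizational: one has to fix the orientation of the pseudo-basis transformation law so that the ideal memberships $u_{i,j}\in\ag_i\ag_j'^{-1}$, $v_{i,j}\in\bg_i'\bg_j^{-1}$ and the determinant identities in (1) come out in exactly the stated form and not in a transposed or inverted variant; once the conventions are pinned down, the verification $VAU = I_n$ is a short index computation. An alternative, fully constructive existence proof is furnished by the pseudo-SNF algorithm of the next section, which produces $U$ and $V$ through repeated applications of the pseudo-HNF algorithm.
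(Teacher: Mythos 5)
The paper states Theorem \ref{th:SNF} without proof: it recalls the elementary divisor theorem (\ref{eq:el_div}) for a pair $N\subseteq M$ and then simply asserts that "in the language of bi-pseudo matrices the elementary divisor theorem takes the following form." Your argument is the intended (but unwritten) translation, and it is correct: integrality gives $f(N)\subseteq M$, the elementary divisor theorem applied to $f(N)\subseteq M$ supplies $(\bg_i',\alpha_i)_i$ and $\dg_i$ (hence $\ag_i'=\dg_i\bg_i'$, giving (3)), transporting through $f^{-1}$ yields a second pseudo-basis of $\bigoplus_j\ag_j\omega_j$, the pseudo-basis change lemma produces $U$ and $V$ with the memberships in (4) and the determinantal identities in (1), and the index computation $f(f^{-1}(\alpha_j))=\alpha_j$ gives $VAU=I_n$, i.e.\ (2). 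One small observation: the paper's final sentence writes the triplet as $(UAV,\dots)$, which is inconsistent with its own item (2) stating $VAU=I_n$; your derivation gives $VAU=I_n$, matching (2), so the discrepancy is a typo in the statement, not an issue with your proof.
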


Our algorithm for computing the pseudo-SNF of an integral bi-pseudo matrix is derived from~\cite[Alg. 1.7.3]{Cohen2000}. 
The possibility of working modulo the determinantal ideal was considered (although not explicitly described), 
but as for the computation of the pseudo-HNF, this does not prevent the growth of the denominators. We 
propose a modular version of~\cite[Alg. 1.7.3]{Cohen2000} incorporating the normalization. We restrict 
ourselves to the case of non-singular square integral pseudo-matrices, but this result can be extended 
to the rectangular case easily.

\begin{algorithm}[ht]
  \caption{pseudo-SNF of a full-rank square bi-pseudo matrix}
  \begin{algorithmic}[1]\label{alg:pseudosnf}
 \REQUIRE $(A,(\bg_k)_{k\leq n},(\ag_k)_{k\leq n})$ integral $n\times n$ bi-pseudo matrix and 
$\dg = \det(A)\prod_i \ag_i\bg_i^{-1}$
\STATE Compute $(\bg_i^{-1})_{i\leq n}$. 
\STATE Normalize $(A'_j,\ag_j)$ and $(A_i,\bg_i^{-1})$ for $i,j\leq n$.
\STATE Reduce $a_{i,j}\bmod \ag_i^{-1}\bg_j$ for $i,j\leq n$.
\FOR{$i = n , \dotsc , 1$}
\STATE $\operatorname{StepOver}\leftarrow\operatorname{false}$
\WHILE {$\operatorname{StepOver} = \operatorname{false}$}
\STATE $M\leftarrow\operatorname{ColPivot}(M,\dg,i)$ with Algorithm~\ref{alg:colpivot}.
\STATE $M,\operatorname{StepOver}\leftarrow\operatorname{RowPivot}(M,\dg,i)$ with Algorithm~\ref{alg:rowpivot}.
\IF{$\operatorname{StepOver}=\operatorname{true}$}
\STATE $\bg\leftarrow \ag_i\bg_i$.
\FOR{$1\leq k,l<i,\operatorname{StepOver}=\operatorname{true}$}
\IF{$a_{k,l}\ag_l\bg_k^{-1}\not\subset\bg$}
\STATE Let $g\in\g:=\bg_i\bg_k^{-1}$ such that $a_{k,l}g\notin\ag_i\ag_l^{-1}$.
\STATE $A_i\leftarrow A_i + gA_k$, $\operatorname{StepOver}\leftarrow\operatorname{false}$.
\STATE Reduce $a_{k,i}\bmod \dg\ag_k^{-1}\bg_i$ for $k\leq i$ using Algorithm~\ref{alg:reduction}.
\ENDIF
\ENDFOR
\ENDIF
\IF{$\operatorname{StepOver}=\operatorname{true}$}
\STATE $\ag_i\leftarrow a_{i,i}\ag_i$, $a_{i,i}\leftarrow 1$. 
\STATE $\dg_i\leftarrow \gcd(\ag_i\bg_i^{-1},\dg)$.
\STATE $\dg\leftarrow \dg\dg_i^{-1}$. 
\ENDIF
\ENDWHILE
\ENDFOR
\RETURN $\dg_1,\dotsc,\dg_n$.
  \end{algorithmic}
\end{algorithm}

For the sake of clarity, we give the description of row and column operations in 
separate algorithms. These are very similar to the row operations for the pseudo-HNF
computation. The main difference is that we cannot use normalization on the pivots since 
we need a strictly increasing chain of ideals to ensure that the algorithm terminates. 

\begin{algorithm}[ht]
  \caption{ColPivot}
  \begin{algorithmic}[1]\label{alg:colpivot}
\REQUIRE $(A,(\bg_k)_{k\leq n},(\ag_k)_{k\leq n})$, $i\leq n$, $\dg$
\FOR {$j = i-1,\dotsc,1$}
\IF{$a_{i,j}\neq 0$}
\STATE $\g\leftarrow a_{i,i}\ag_i + a_{i,j}\ag_j$
\STATE Compute $\gamma \in \ag_i \g^{-1}$ and $\delta \in \mathfrak a_j \g^{-1}$ such that $a_{i,i}\gamma + a_{i,j}\delta = 1$ using Algorithm~\ref{alg:crt2}.
\STATE $(A'_j,A'_i)\leftarrow (a_{i,j}A'_j-a_{i,i}'A_i,\gamma A'_i + \delta A'_j)$.
\STATE $(\ag_j,\ag_i)\leftarrow (\ag_i\ag_j\g^{-1},\g)$. 
\STATE Normalize $(A'_j,\mathfrak a_j)$ and $(A'_i,\mathfrak a_i)$ using Algorithm~\ref{alg:normalization}.
\STATE Reduce $a_{j,k}\bmod \dg\ag_j^{-1}\bg_k$ and $a_{i,k}\bmod \dg\ag_i^{-1}\bg_k$ for $k\leq i$ using Algorithm~\ref{alg:reduction}.
\ENDIF
\ENDFOR
  \end{algorithmic}
\end{algorithm}

\begin{algorithm}[ht]
  \caption{RowPivot}
  \begin{algorithmic}[1]\label{alg:rowpivot}
\REQUIRE $(A,(\bg_k)_{k\leq n},(\ag_k)_{k\leq n})$, $i\leq n$, $\dg$
\STATE $\operatorname{StepOver}\leftarrow\operatorname{true}$
\FOR {$j = i-1,\dotsc,1$}
\IF{$a_{j,i}\neq 0$}
\STATE $\g\leftarrow \bg_i^{-1} + a_{j,i}\bg_j^{-1}$
\STATE Compute $\gamma \in \bg_i^{-1} \g^{-1}$ and $\delta \in \mathfrak b_j \g^{-1}$ such that $\gamma + a_{j,i}\delta = 1$ using Algorithm~\ref{alg:crt2}.
\STATE $(A_j,A_i)\leftarrow (a_{j,i}A_j-a_{i,i}A_i,\gamma A_i + \delta A_j)$.
\STATE $(\bg_j,\bg_i)\leftarrow (\bg_i\bg_j\g,\g^{-1})$. 
\STATE Normalize $(A_j,\mathfrak b_j^{-1})$ and $(A_i,\mathfrak b_i^{-1})$ using Algorithm~\ref{alg:normalization}.
\STATE Reduce $a_{k,j}\bmod \dg\ag_k^{-1}\bg_j$ and $a_{k,i}\bmod \dg\ag_k^{-1}\bg_i$ for $k\leq i$ using Algorithm~\ref{alg:reduction}.
\STATE $\operatorname{StepOver}\leftarrow\operatorname{false}$.
\ENDIF
\ENDFOR
\RETURN $\operatorname{StepOver}$.
  \end{algorithmic}
\end{algorithm}

Before proving the correctness of Algorithm~\ref{alg:pseudosnf}, let us prove that 
the elementary operations performed on the bi-pseudo matrix do not change the quotient 
module that it represents.

\begin{lemma}\label{lem:quotient_normalization}
Let $A,(\bg_i)_{i\leq n},(\ag_i)_{i\leq n}$ be a bi-pseudo matrix. 
Then the operations 
\begin{itemize}
 \item $\bg_i\leftarrow (\alpha)\bg_i$, $A_i\leftarrow (\alpha)A_i$,
 \item $\ag_j\leftarrow (\alpha)\ag_j$, $A'_j\leftarrow (1/\alpha)A'_j$,
\end{itemize}
do not modify the quotient module. In particular, the operations 
\begin{itemize}
 \item Normalize $(A_i,\mathfrak b_i^{-1})$ using Algorithm~\ref{alg:normalization}.
 \item Normalize $(A'_j,\mathfrak a_j)$ using Algorithm~\ref{alg:normalization}.
\end{itemize}
do not modify the quotient module. Moreover, they leave the integral ideal 
$\sum_{i,j}a_{i,j}\ag_j\bg^{-1}_i$ unchanged. 
\end{lemma}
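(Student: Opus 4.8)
The plan is to unwind the definitions of the quotient module $Q$ and of an integral bi-pseudo matrix, and to check that under each of the two scaling operations the submodule $N$ and the ambient module $M$ are altered only up to the \emph{same} canonical identification, so that the quotient is literally unchanged. Recall from the setup that $(A,I,J)$ with $I=(\bg_i)_i$, $J=(\ag_j)_j$ represents
\[
  Q = (\bg_1\eta_1\oplus\cdots\oplus\bg_n\eta_n)\big/\bigl(\ag_1 f(\omega_1)\oplus\cdots\oplus\ag_n f(\omega_n)\bigr),
\]
where $f$ is the linear map with matrix $A$ and $f(\omega_j)=\sum_i a_{i,j}\eta_i$. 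So the data entering $Q$ are: the ambient free module $M=\bigoplus_i\bg_i\eta_i$, and the elements $v_j:=\ag_j f(\omega_j)$ spanning (with their coefficient ideals) the submodule $N$.

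\textbf{First operation: $\bg_i\leftarrow(\alpha)\bg_i$, $A_i\leftarrow(\alpha)A_i$.} First I would observe that replacing $\bg_i$ by $(\alpha)\bg_i$ replaces the summand $\bg_i\eta_i$ of $M$ by $(\alpha)\bg_i\eta_i=\bg_i(\alpha\eta_i)$, i.e.\ it amounts to the change of pseudo-basis $\eta_i\mapsto\alpha\eta_i$ of $M$ (note $\alpha\in K^\times$, so $\bg_i\eta_i$ and $\bg_i(\alpha\eta_i)$ are the same abstract module, just with a rescaled basis vector). Simultaneously, $A_i\leftarrow(\alpha)A_i$ means $a_{i,j}\mapsto\alpha a_{i,j}$ for all $j$, so the new $f$ satisfies $f_{\mathrm{new}}(\omega_j)=\sum_{k\neq i}a_{k,j}\eta_k+\alpha a_{i,j}\eta_i=\sum_k a_{k,j}(\text{basis vector }k\text{ of }M_{\mathrm{new}})$ under the identification $\eta_i\leftrightarrow\alpha\eta_i$. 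Hence $N_{\mathrm{new}}=\bigoplus_j\ag_j f_{\mathrm{new}}(\omega_j)$ corresponds to $N$ under exactly the same isomorphism $M_{\mathrm{new}}\cong M$, and $Q$ is unchanged. One should also check the integrality condition $a_{i,j}\in\bg_i\ag_j^{-1}$ is preserved: the new entries $\alpha a_{i,j}$ lie in $(\alpha)\bg_i\ag_j^{-1}$, which is the new $\bg_i$ times $\ag_j^{-1}$, as required.

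\textbf{Second operation: $\ag_j\leftarrow(\alpha)\ag_j$, $A'_j\leftarrow(1/\alpha)A'_j$.} Here I would argue that the relevant invariant is the submodule $N$, and that scaling the $j$-th column coefficient ideal by $(\alpha)$ while scaling the $j$-th column of $A$ by $1/\alpha$ leaves the $j$-th generator $\ag_j f(\omega_j)$ of $N$ literally fixed: the new contribution is $(\alpha)\ag_j\cdot\bigl(\tfrac1\alpha f(\omega_j)\bigr)=\ag_j f(\omega_j)$ as a subset of $M$ (and as a pseudo-object, $(\alpha)\ag_j$ with basis vector $\tfrac1\alpha f(\omega_j)$ equals $\ag_j$ with basis vector $f(\omega_j)$). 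Since $M$ is untouched and $N$ is untouched, $Q$ is unchanged. Again integrality is preserved because the new entries $a_{i,j}/\alpha$ lie in $\bg_i\ag_j^{-1}(\alpha)^{-1}=\bg_i((\alpha)\ag_j)^{-1}$.

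\textbf{The normalization steps and the integral ideal.} The normalization of $(A_i,\bg_i^{-1})$ by Algorithm~\ref{alg:normalization} replaces $\bg_i^{-1}$ by $(\alpha/l)k\bg_i^{-1}$ and $A_i$ by $\bigl(l/(k\alpha)\bigr)A_i$ for a suitable $\alpha\in K^\times$; writing $\mu=(l/(k\alpha))$ this is exactly the first operation with $\bg_i\mapsto(\mu^{-1})\bg_i$ (equivalently $\bg_i^{-1}\mapsto(\mu)\bg_i^{-1}$) and $A_i\mapsto(\mu)A_i$, so it does not modify $Q$. Likewise normalizing $(A'_j,\ag_j)$ is an instance of the second operation, so $Q$ is preserved. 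Finally, for the integral ideal $\mathfrak{s}:=\sum_{i,j}a_{i,j}\ag_j\bg_i^{-1}$: under the first operation the $i$-th row of terms becomes $\sum_j(\alpha a_{i,j})\ag_j((\alpha)\bg_i)^{-1}=\sum_j a_{i,j}\ag_j\bg_i^{-1}$, and the other rows are untouched, so $\mathfrak{s}$ is unchanged; under the second operation the $j$-th column of terms becomes $\sum_i(a_{i,j}/\alpha)((\alpha)\ag_j)\bg_i^{-1}=\sum_i a_{i,j}\ag_j\bg_i^{-1}$, again leaving $\mathfrak{s}$ fixed. Since the two normalization steps are instances of these operations, they too leave $\mathfrak{s}$ unchanged.

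\textbf{Main obstacle.} The computations are entirely routine; the only point requiring care is bookkeeping the identification $M_{\mathrm{new}}\cong M$ consistently so that ``$N$ corresponds to $N$'' is a statement about the \emph{same} isomorphism that identifies the ambient modules — i.e.\ making precise that a pseudo-basis change of $M$ together with the compatible substitution in $A$ is a genuine isomorphism of pairs $(M,N)$ and not merely of $M$ alone. Once that identification is spelled out, each of the four bulleted claims reduces to the one-line verifications above.
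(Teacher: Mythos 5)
Your proof is correct and follows essentially the same strategy as the paper's: the paper performs the simultaneous rescaling $\eta_i \leftarrow \eta_i/\alpha$ (resp.\ $\omega_j \leftarrow \omega_j/\alpha$) so that $M$ (resp.\ $N$) is \emph{literally} unchanged and reads off the new matrix entries, while you keep $\eta_i$ fixed, observe $(\alpha)\bg_i\eta_i = \bg_i(\alpha\eta_i)$, and track the resulting canonical isomorphism $M_{\mathrm{new}}\cong M$ carrying $N_{\mathrm{new}}$ to $N$. These are the same computation written from opposite sides of the identification.

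Your write-up is in fact more complete than the paper's on two points that the lemma asserts but whose proof the paper leaves implicit: you verify that $\sum_{i,j} a_{i,j}\ag_j\bg_i^{-1}$ is invariant under both scalings (the paper's proof says nothing about the ``Moreover'' clause), and you explicitly show that the output of Algorithm~\ref{alg:normalization} applied to $(A_i,\bg_i^{-1})$ or $(A'_j,\ag_j)$ is an instance of the corresponding generic scaling. One small slip in that last paragraph: with $\mu = l/(k\alpha)$, Algorithm~\ref{alg:normalization} sends $A_i \mapsto (\mu)A_i$ and $\bg_i^{-1}\mapsto (\mu^{-1})\bg_i^{-1}$, i.e.\ $\bg_i\mapsto (\mu)\bg_i$, so that \emph{both} factors are $(\mu)$ as required to match the first bullet; you wrote $\bg_i\mapsto(\mu^{-1})\bg_i$, which would not be an instance of the first operation. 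The fix is immediate and the surrounding argument is unaffected.
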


\begin{proof}
We keep the same notation as before: $M=\bigoplus_i\bg_i\eta_i$, $N=\bigoplus_i\ag_i\omega_i$. Then 
if we perform $\bg_i\leftarrow (\alpha)\bg_i$, we need to update $\eta_i\leftarrow \frac{1}{\alpha}\eta_i$ 
to preserve $M$. Then for all $1\leq j \leq n$, 
\begin{align*}
f(\omega_j) &= a_{1,j}\eta_1 + \cdots + a_{n,j}\eta_n\\
&= a_{1,j}\eta_1 + \cdots a_{i,j}\alpha \left(\frac{\eta_i}{\alpha}\right) + \cdots + a_{n,j}\eta_n.
\end{align*}
Thus the $i$-th row of $A$ gets multiplied by $\alpha$. 

Likewise, if $\ag_j\leftarrow (\alpha)\ag_j$, we need to update $\omega_j\leftarrow \frac{1}{\alpha}\omega_j$ 
to preserve $N$, which means by linearity that $f(\omega_j) = \frac{1}{\alpha}f(\omega_j)$. This means that 
the $j$-th column of $A$ gets multiplied by $\frac{1}{\alpha}$. 
\end{proof}

\begin{proposition}
Algorithm~\ref{alg:pseudosnf} terminates and gives the pseudo-SNF of the input.
\end{proposition}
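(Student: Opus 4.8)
The plan is to prove the two assertions, termination and correctness of the returned ideals $\dg_1,\dotsc,\dg_n$, separately, exploiting the fact that Algorithm~\ref{alg:pseudosnf} differs from Cohen's algorithm~\cite[Alg.~1.7.3]{Cohen2000} only by the insertion of the normalization and reduction steps. For correctness I would first note that every elementary operation occurring in $\operatorname{ColPivot}$ (Algorithm~\ref{alg:colpivot}) and $\operatorname{RowPivot}$ (Algorithm~\ref{alg:rowpivot})---the $\g$-step computed via Algorithm~\ref{alg:crt2}, the paired row/column replacement, and the attached update of the coefficient ideals---is one of the transformations already used in~\cite[Alg.~1.7.3]{Cohen2000}, hence corresponds to multiplying $A$ on the left (resp.\ right) by a matrix $U$ (resp.\ $V$) with $u_{i,j}\in\ag_i\ag_j'^{-1}$ (resp.\ $v_{i,j}\in\bg_i'\bg_j^{-1}$) and preserves the associated quotient module $Q$. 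The only genuinely new steps are the normalizations (Algorithm~\ref{alg:normalization}) and the reductions (Algorithm~\ref{alg:reduction}); by Lemma~\ref{lem:quotient_normalization} and Proposition~\ref{prop:reduction} these leave $Q$ and the bi-pseudo structure unchanged and only rescale coefficient ideals by principal ideals, which is absorbed into $U$ and $V$. Thus when the outer loop finishes, $A$ has been brought to the identity, the diagonal having been normalized to $1$ at Step~20, so that $Q\cong\bigoplus_i\ag_i'/\bg_i'=\bigoplus_i\OK/\dg_i$, and conditions (1)--(4) of Theorem~\ref{th:SNF} hold as in~\cite{Cohen2000}. Finally, the inner loop over $1\le k,l<i$ is precisely the divisibility-enforcing step of the classical Smith normal form: whenever $a_{k,l}\ag_l\bg_k^{-1}\not\subset\bg=\ag_i\bg_i$ one mixes row $k$ into row $i$, and iterating until the test fails forces the resulting $\dg_i$ to satisfy $\dg_{i-1}\subset\dg_i$. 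By the uniqueness of the pseudo-SNF established in~\cite{Cohen2000}, the returned tuple $(\dg_1,\dotsc,\dg_n)$ is then the sequence of elementary-divisor ideals of $N\subseteq M$, which is the assertion.

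For termination, the outer \textbf{for} loop runs over $i=n,\dotsc,1$ and is finite, and the \textbf{for} loops inside $\operatorname{ColPivot}$ and $\operatorname{RowPivot}$ are finite, so the only point to settle is that for fixed $i$ the \textbf{while} loop exits. First observe that the variable $\dg$ is constant throughout one execution of this \textbf{while} loop, since it is modified only at Step~22, which is immediately followed by leaving the loop. I would then track the integral ideal
\[ \mathfrak e_i \;:=\; a_{i,i}\,\ag_i\,\bg_i^{-1}, \]
integral because $a_{i,i}\in\bg_i\ag_i^{-1}$. After a successful $\operatorname{ColPivot}/\operatorname{RowPivot}$ pass the rows and columns with index $\ge i$ are diagonal, so a cofactor expansion of $\det(A)$, combined with the bookkeeping of $\dg$ (the elementary operations and Step~20 keep $\det(A)\prod_j\ag_j\bg_j^{-1}$ equal to its input value, and Step~22 divides $\dg$ by $\dg_j$ for $j>i$), shows that $\mathfrak e_i$ divides the current---hence, during the \textbf{while} loop, constant---value of $\dg$. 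The key claim, mirroring the termination analysis of~\cite[Alg.~1.7.3]{Cohen2000}, is that each iteration of the \textbf{while} body that does \emph{not} terminate the loop---one in which $\operatorname{RowPivot}$ performs a row operation, or the $k,l$-loop finds $a_{k,l}\ag_l\bg_k^{-1}\not\subset\bg$ and modifies $A_i$---replaces $\mathfrak e_i$ by a proper divisor. Since an integral ideal of $\OK$ has only finitely many integral divisors, a strictly ascending chain $\mathfrak e_i^{(1)}\subsetneq\mathfrak e_i^{(2)}\subsetneq\cdots$ of divisors of the fixed ideal $\dg$ must stabilize, and the \textbf{while} loop exits after finitely many passes.

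The step I expect to be the main obstacle is exactly this last claim: making rigorous that mixing row $k$ into row $i$ (and re-pivoting the $i$-th column afterwards) can only coarsen $\mathfrak e_i$, and strictly so when a modification actually occurs. As in the integer case this is a ``content drops'' argument: one must follow how the replacement $(A_i,A_k)\mapsto(A_i+gA_k,A_k)$, with $g\in\g=\bg_i\bg_k^{-1}$ chosen so that $a_{k,l}g\notin\ag_i\ag_l^{-1}$, followed by a fresh $\operatorname{ColPivot}/\operatorname{RowPivot}$, acts on $a_{i,i}\ag_i\bg_i^{-1}$, using that each $\g$-step replaces a pivot ideal by a sum of two ideals one of which is the previous pivot ideal. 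The delicate part is the bookkeeping of coefficient ideals through the interleaved normalizations, which by Lemma~\ref{lem:quotient_normalization} alter $\ag_i,\bg_i$ only by principal factors and leave $\sum_{i,j}a_{i,j}\ag_j\bg_i^{-1}$---and hence the relevant divisibility relations---unchanged; once the invariant is pinned down correctly this is a finite, local verification. With this in hand, termination follows from the finite-divisor argument above, and together with the correctness discussion it proves the proposition.
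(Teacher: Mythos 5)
There is a genuine gap, and it sits exactly at the point your proof passes over most quickly: the modular reductions do \emph{not} preserve the quotient module. You invoke Lemma~\ref{lem:quotient_normalization} and Proposition~\ref{prop:reduction} to claim that both normalization and reduction ``leave $Q$ and the bi-pseudo structure unchanged,'' but Lemma~\ref{lem:quotient_normalization} covers only the normalizations, and Proposition~\ref{prop:reduction} is a complexity/size statement about Algorithm~\ref{alg:reduction}, not a module-invariance statement. Replacing $a_{i,j}$ by $a_{i,j}\bmod \dg\ag_j^{-1}\bg_i$ perturbs $f(\omega_j)$ by an element of $\dg\,\bg_i\eta_i\subseteq\dg M$, so $N$ and hence $Q=M/N$ genuinely change; only data ``modulo $\dg$'' survives. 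Consequently your conclusion that at the end $Q\cong\bigoplus_i\ag_i'/\bg_i'$ and that uniqueness of the pseudo-SNF finishes the argument does not follow --- indeed the algorithm itself does not return $\ag_i'\bg_i'^{-1}$ but $\dg_i=\gcd(\ag_i\bg_i^{-1},\dg)$ at Step~21, and justifying \emph{that} formula is the heart of the matter. The paper's proof does this by introducing the invariants $\delta_i(A,I,J)$ (sums of determinantal ideals of $i\times i$ sub-pseudo-matrices), showing that the reductions preserve $\gcd\bigl(\det(A)\prod_i\ag_i\bg_i^{-1},\dg\bigr)$ via an explicit expansion of $\det(\overline A)$, and then recovering $\dg_i=\gcd(\dg\mathfrak P_i^{-1},\ag_i'\bg_i'^{-1})$ by induction, following Cohen's modular integer SNF~\cite[Alg.~2.4.14]{Cohen1996}. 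None of this machinery appears in your proposal, and it cannot be replaced by the uniqueness of the pseudo-SNF of the original module.

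The same issue undermines the termination argument. You track $\mathfrak e_i=a_{i,i}\ag_i\bg_i^{-1}$ and assert that each non-terminating pass replaces it by a proper divisor, flagging the strict increase as the main obstacle; but the obstacle is not only the row-mixing bookkeeping, it is that after a modular reduction the entry is $\overline{a_{i,i}}$ rather than $a_{i,i}$, and $\overline{a_{i,i}}\ag_i\bg_i^{-1}$ by itself need not increase. The paper repairs this by comparing the modular and non-modular runs and showing $a_{i,i}\ag_i\bg_i^{-1}\subseteq\overline{a_{i,i}}\ag_i\bg_i^{-1}+\dg\subseteq\OK$, so that it is the ideal $\overline{a_{i,i}}\ag_i\bg_i^{-1}+\dg$ that strictly ascends and must stabilize. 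Your chain $\mathfrak e_i^{(1)}\subsetneq\mathfrak e_i^{(2)}\subsetneq\cdots$ should be replaced by this $\dg$-saturated version (after which either your finitely-many-divisors argument or the ascending chain condition closes the loop). In short: the skeleton of your argument matches the paper's, but the two claims you leave as ``to be made rigorous'' are precisely where the modular reductions break the naive invariants, and the proof is incomplete without the $\gcd$-with-$\dg$ corrections on both the correctness and the termination side.
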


\begin{proof}
The proof of the termination of the non-modular version of Algorithm~\ref{alg:pseudosnf} is 
given in the proof of~\cite[Alg. 1.7.3]{Cohen2000}, while the correctness of the modular 
version is treated in the integer case in the proof of~\cite[Alg. 2.4.14]{Cohen1996}. We 
only highlight the points where our context could induce a difference. The main argument 
showing that Steps~3 to~19 will only be repeated a finite amount of times is that the 
integral ideal $a_{i,i}\ag_i\bg_i^{-1}$ at step $i$ either increases or is left unchanged 
therefore triggering the end of the loop. The main difference with~\cite[Alg. 1.7.3]{Cohen2000} is 
that we normalize and reduce $\ag_j,A_j$ for $j\leq i$ to prevent coefficient explosion. Without 
taking into account modular reductions, Steps~5 and~6 transform the triplet $(a_{i,i},\ag_i,\bg_i^{-1})$ 
into 
$$\left(\frac{1}{\alpha_i\beta_i},\sum_{j\leq i}a_{i,j}(\alpha_j)\ag_j,\beta_i\bg_i^{-1}+\sum_{j<i}a'_{j,i}
(\beta_j)\bg_j^{-1}\right),$$
where the $a'_{i,j}$ are the entries of $A$ after Step~5, $\alpha_j$ are the minima used to 
normalize $\ag_j,A'_j$ and $\beta_j$ are the minima used to normalize $\bg^{-1}_i,A_i$. As 
in~\cite[Alg. 1.7.3]{Cohen2000}, $a_{i,i}\ag_i\bg_i^{-1}\subseteq\sum_{i,j}a_{i,j}\ag_j\bg^{-1}_i \subseteq\OK$ is integral 
since according to Lemma~\ref{lem:quotient_normalization} the normalization steps do not alter 
this property. In addition, we see that $a_{i,i}\ag_i\bg_i^{-1}$ can only increase. 
To show that the modular reductions do not 
prevent the algorithm to terminate, we compare the evolution of $a_{i,i}\ag_i\bg_i^{-1}$ and 
$\overline{a_{i,i}}\ag_i\bg_i^{-1}$ where the $a_{i,j}$ are the coefficients of the matrix $A$ 
during the course of Algorithm~\ref{alg:pseudosnf} executed without the modular reductions while 
the $\overline{a_{i,j}}$ are the same values when Algorithm~\ref{alg:pseudosnf} is run with 
modular reductions. The analysis of~\cite[Alg. 1.7.3]{Cohen2000} still holds for the 
non-modular version of Algorithm~\ref{alg:pseudosnf} which only differs from ~\cite[Alg. 1.7.3]{Cohen2000} 
by the normalizations. 
The essential argument for the termination of Algorithm~\ref{alg:pseudosnf} is that we have 
\begin{align*}
a_{i,i}\ag_i\bg_i^{-1}&\subseteq \overline{a_{i,i}}\ag_i\bg_i^{-1} + \dg\subseteq\OK. %\\
\end{align*}
Indeed, 
let $\overline{a_{i,j}}:=a_{i,j}\bmod \dg\ag_j^{-1}\bg_i$, 
and $d_{i,j}:= \overline{a_{i,j}}-a_{i,j}\in\dg\ag_j^{-1}\bg_i$, then for each $x\in\ag_i\bg_i^{-1}$, 
\begin{align*}
a_{i,i}x &= \underbrace{\overline{a_{i,j}}x}_{\in \overline{a_{i,j}}\ag_i\bg_i^{-1}} + \underbrace{d_{i,i}x}_{\in\dg},
\in \overline{a_{i,i}}\ag_i\bg_i^{-1} + \dg\\
\overline{a_{i,i}}x &= \underbrace{a_{i,i}x}_{\in\OK} - \underbrace{d_{i,i}x}_{\in\OK}\in\OK.
\end{align*}
Therefore, $\overline{a_{i,i}}\ag_i\bg_i^{-1} + \dg$ is an integral ideal which strictly increases and can only 
stabilize when the termination condition is reached.

The other main claim to be verified is the correctness of the 
modular approach. We adapt and reuse the argument presented in the proof of~\cite[Alg. 2.4.14]{Cohen1996}. 
We extend the notion of an $i\times i$ submatrix to pseudo-matrices by taking into account the 
coefficient ideals. Then let $\delta_i(A,I,J)$ be the sum of the determinantal ideal of all the $i\times i$ 
submatrices of $(A,I,J)$. As in the integer case, this value is an integral ideal invariant under the 
transformations performed in the non-modular version of Algorithm~\ref{alg:pseudosnf}. 
In addition, we need to prove that 
the modular reductions of the form $a_{i,j}\leftarrow a_{i,j} \bmod \dg\ag_j^{-1}\bg_i$ for 
$\dg\subseteq \OK$ performed on 
rows and columns preserve $\gcd\left(\det(A)\prod_i\ag_i\bg_i^{-1},\dg\right)$. 
From the symmetry between row and 
column reduction, it suffices to prove this for row reductions. Our determinantal 
ideal is given by 
$$\det(A)\prod_i\ag_i\bg_i^{-1} = \left( \sum_{\sigma\in S_n}\prod_i a_{i,\sigma(i)}\right) \prod_i\ag_i\bg_i^{-1}.$$
Let $\overline{a}_{i,j} := a_{i,j}\bmod \dg\ag_i^{-1}\bg_j$ and $d_{i,j}\in \dg\ag_i^{-1}\bg_j$ such that 
$\overline{a}_{i,j} = d_{i,j} + a_{i,j}$. In particular, for $\sigma\in S_n$, we have 
\begin{align*}
a_{1,\sigma(1)}\cdots &\overline{a}_{i,\sigma(i)}\cdots a_{n,\sigma(n)}\prod_j \ag_j\bg_j^{-1} \\
&=a_{1,\sigma(1)}\cdots (d_{i,\sigma(i)} + a_{i,\sigma(i)})\cdots a_{n,\sigma(n)}\prod_j \ag_i\bg_{\sigma(j)}^{-1}\\
&\subseteq \prod_j a_{j,\sigma(j)}\ag_j\bg_{\sigma(j)}^{-1} + d_{i,\sigma(i)}\ag_i\bg_{\sigma(i)}^{-1}
\underbrace{\prod_{j\neq i}a_{j,\sigma(j)}\ag_j\bg_{\sigma(j)}^{-1}}_{\in\OK}\\
&\subseteq \prod_j a_{j,\sigma(j)}\ag_j\bg_{\sigma(j)}^{-1} + \dg.
\end{align*}
This means that $\det(\overline{A})\prod_i \ag_i\bg_i^{-1} \subseteq \det(A)\prod_i \ag_i\bg_i^{-1} + \dg$ 
where $\overline{A} = (\overline{a}_{i,j})_{i,j\leq n}$. We show by using the same argument that 
$\det(A)\prod_i \ag_i\bg_i^{-1} \subseteq  \det(\overline{A})\prod_i \ag_i\bg_i^{-1} + \dg$, thus concluding 
that $\det(\overline{A})\prod_i \ag_i\bg_i^{-1} + \dg = \det(A)\prod_i \ag_i\bg_i^{-1} + \dg$.
Let the $(\dg_i)_{i\leq n}$ be the 
elementary divisors of the quotient module represented by $(A,(\bg_i)_{i\leq n},(\ag_i)_{i\leq n})$, 
and $\dg:= \prod_i\dg_i$. Let $S = (I_n,(\bg'_i)_{i\leq n},(\ag'_i)_{i\leq n})$, 
be the bi-pseudo matrix resulting from the manipulation described in Algorithm~\ref{alg:pseudosnf} on 
the input $(A,(\bg_i)_{i\leq n},(\ag_i)_{i\leq n})$, and $\Gamma$ the actual pseudo-SNF of $M$. Then, as in the proof of~\cite[Alg. 2.4.14]{Cohen1996}, 
we have 
\begin{align*}
\dg_i\cdots\dg_n &= \gcd(\dg , \delta_{n-i+1}(\Gamma))\\
&= \gcd(\dg,\delta_{n-i+1}(A,(\bg_i)_{i\leq n},(\ag_i)_{i\leq n}))\\
&= \gcd(\dg,\delta_{n-i+1}(S))\\
& = \gcd(\dg,\ag'_i\left.\bg'_i\right.^{-1}\cdots \ag'_n\left.\bg'_n\right.^{-1}).
\end{align*}
After setting $\mathfrak{P}_i = \dg_{i+1}\dotsb \dg_n$ we have
$$\gcd\left(\dg\mathfrak{P}_i^{-1},\left(\ag'_{i+1}\left.\bg'_{i+1}\right.^{-1}\cdots \ag'_n\left.\bg'_n\right.^{-1}
  \right)\mathfrak{P}_i^{-1}\right) =\OK,$$ 
and 
$$\gcd\left(\dg \mathfrak{P}_i^{-1},\left(\ag'_{i}\left.\bg'_{i}\right.^{-1}\cdots \ag'_n\left.\bg'_n\right.^{-1}
  \right)\mathfrak{P}_i^{-1}\right) =\dg_i.$$ 
Therefore, $\dg_i = \gcd(\dg\mathfrak{P}_i^{-1},\ag'_{i}\left.\bg'_{i}\right.^{-1})$, which shows by induction 
the correctness of Algorithm~\ref{alg:pseudosnf}. 
\end{proof}

To analyze the cost of the pseudo-SNF computation, we first consider the blocks ColPivot and RowPivot which resemble 
the row operations performed during the pseudo-HNF computation. In the following, we keep the notations 
$\Bid$ and $\Be$ from the analysis of the pseudo-HNF algorithm. 

\begin{proposition}\label{prop:colpivot}
The cost of Algorithm~\ref{alg:rowpivot} and Algorithm~\ref{alg:colpivot} is in 
$$\tilde{O}\left( d^3(d+n)\left( \bs(\dg) + d^4 + d^2\log(|\Delta_K|) + dC\right) \right).$$
\end{proposition}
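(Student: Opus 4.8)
The plan is to follow the template of the proof of Lemma~\ref{prop:compl}, since one pass through the inner loop of Algorithm~\ref{alg:colpivot} (resp. Algorithm~\ref{alg:rowpivot}) is assembled from exactly the same ingredients as Steps~6--12 of Algorithm~\ref{alg:pseudohnf}: an ideal sum, one call to the Euclidean step (Algorithm~\ref{alg:crt2}), a scalar--vector update of two columns (resp. rows) of length $n$, an update of two coefficient ideals, a normalization via Algorithm~\ref{alg:normalization}, and a batch of reductions via Algorithm~\ref{alg:reduction}. I would first pin down the size invariants that hold when the loop body is entered, then bound each ingredient, and finally observe --- as in Lemma~\ref{prop:compl} --- that normalization and reduction dominate.

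\emph{Size invariants.} Arguing as in the paragraph preceding Proposition~\ref{prop:size}, the normalization carried out in the previous iteration (or in Step~2 of Algorithm~\ref{alg:pseudosnf} for the first one) together with Proposition~\ref{prop:normalization} guarantees that every coefficient ideal entering the loop is integral of size at most $\Bid = d^4 + d^2\log(\lvert\Delta_K\rvert)$. Since each entry lies in a product of the two relevant (normalized) coefficient ideals, its denominator is bounded by a minimum of size at most $\Bid/d^2$, and the reduction of Algorithm~\ref{alg:reduction} together with Proposition~\ref{prop:reduction} bounds the $T_2$-norm of every reduced entry in terms of $\inorm{\dg\ag_k^{-1}\bg_l}$, hence in terms of $\min(\dg)$. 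Combining these gives $\bs(a_{k,l}) \in \OT(\bs(\dg)/d + \Bid/d + C) = \OT(\Be)$ for all affected entries, while $\bs(\dg)$ never increases. Lemma~\ref{lem:quotient_normalization} ensures that these transformations preserve the quotient module and the relevant determinantal data, so the invariants are meaningful.

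\emph{Cost of one iteration.} Given the invariants, I would bound the ingredients exactly as in Lemma~\ref{prop:compl}, with the row length $m$ there replaced by $n$: the ideal sum $\g$ by Lemma~\ref{cost:idealsum}; the Euclidean step by the complexity analysis of Algorithm~\ref{alg:crt2}, costing $\OT(d^4\Be)$ and returning cofactors of size $\OT(d\Be)$; the ideal products $\g^{-1}$ and $\ag_i\ag_j\g^{-1}$ (resp. $\bg_i\bg_j\g$) by Propositions~\ref{cost:idealinv} and~\ref{cost:idealrest}, costing $\OT(d^4\Be)$; the scalar--vector combination of two length-$n$ rows by Proposition~\ref{cost:elements}, costing $\OT(d^2(d+n)\Be)$; the two normalizations by Proposition~\ref{prop:normalization}, costing $\OT(d^5\Be + d^3 n\Be)$; and the reduction of the $O(n)$ affected entries by Proposition~\ref{prop:reduction}, costing $\OT(d^3\bs(\dg) + dn\bs(\dg) + d^4 n\Be)$ once the approximated reduced bases are amortized. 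Normalization and reduction dominate, and using the inequalities $\Bid \le d\Be$ and $C + d\log(\lvert\Delta_K\rvert) + d^3 \le \Be$ this collapses to $\OT(d^3(d+n)(\bs(\dg) + d^4 + d^2\log(\lvert\Delta_K\rvert) + dC))$; adding up the contributions over the iterations of the inner loop and over the two algorithms yields the stated bound.

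\emph{Main obstacle.} As in Lemma~\ref{prop:compl}, the delicate part is not any single cost estimate but the cumulative size bookkeeping: one must verify that after the scalar--vector combination, the normalization, and the reductions, the pivot entry re-enters the loop with size $\OT(\Be)$ and the two touched ideals with size at most $\Bid$, so that the reduction moduli $\dg\ag_k^{-1}\bg_l$ remain of size $\OT(\bs(\dg))$ throughout. This is precisely where it matters that the coefficient ideals handled inside \textsc{ColPivot} and \textsc{RowPivot} --- unlike the strictly increasing chain $\dg_i$ assembled in Algorithm~\ref{alg:pseudosnf} --- are normalized at every iteration; without this, the batch of reductions, which governs the running time, could not be controlled.
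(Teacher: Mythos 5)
Your proof takes essentially the same approach as the paper: transport Lemma~\ref{prop:compl} from the pseudo-HNF setting, re-establish the size invariants $\bs(\ag_i),\bs(\bg_j^{-1})\leq \Bid$ and $\bs(a_{i,j})\in\OT(\Be)$, and observe that normalization and reduction dominate. The paper's own proof is just a terse pointer to Lemma~\ref{prop:compl} plus a note on the two differences; your more explicit re-derivation is harmless.

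There is, however, one genuine gap in your cost estimate for the reduction step. You write that the reduction of the $O(n)$ affected entries costs $\OT(d^3\bs(\dg)+dn\bs(\dg)+d^4n\Be)$ ``once the approximated reduced bases are amortized.'' That amortization is exactly what the paper flags as \emph{not} available here: in Step~8 of Algorithm~\ref{alg:colpivot} and Step~9 of Algorithm~\ref{alg:rowpivot}, the entry $a_{j,k}$ is reduced modulo $\dg\ag_j^{-1}\bg_k$, so the modulus depends on the column index $k$ (resp. row index) and changes from one entry to the next. Consequently the $\tilde L^1$ reduced basis of the modulus cannot be computed once and shared; Proposition~\ref{prop:reduction} must be applied with the full $\OT(d^3\bs(\cdot))$ term for each of the $O(n)$ entries, and the additional ideal products $\dg\ag_j^{-1}\bg_k$ must themselves be recomputed per entry. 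The correct per-iteration cost of this block is therefore $\OT\bigl(nd^3(\bs(\dg)+\Bid)+nd^4\Be\bigr)$ rather than your $\OT(d^3\bs(\dg)+dn\bs(\dg)+d^4n\Be)$. Your final answer survives only because the advertised bound has the prefactor $d^3(d+n)$, which absorbs the extra $nd^3\bs(\dg)$ term, but the bookkeeping inside the estimate is wrong as written and should be corrected; in particular, if one were targeting a tighter bound this step would genuinely fail.
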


\begin{proof}
This is derived almost entirely from Lemma~\ref{prop:compl} which was used in the 
analysis of the pseudo-HNF algorithm. The only notable difference, which does not 
impact the complexity, occurs in the modular reduction (Step~8 of Algorithm~\ref{alg:colpivot} 
and Step~9 of Algorithm~\ref{alg:rowpivot}). First, the reduction is no longer modulo 
$\dg\bg_i^{-1}$ but modulo $\dg\ag_j^{-1}\bg_k$. However, the size of these ideals remains 
in $\tilde{O}\left(\bs(\dg) + \Bid\right)$ thus not impacting the analysis. Also, 
the reduction of the entries of a row or a column is modulo a different ideal for 
each entry, thus preventing us from reusing the reduced basis. However, considering 
the bounds on the size of the elements in play, this does not change the complexity. 
\end{proof}

\begin{proposition}\label{prop:verif_snf}
The cost of Steps~10 to~18 of Algorithm~\ref{alg:pseudosnf} is in 
$$\tilde{O}\left( nd^2(d+n)\left( \bs(\dg) + d^4 + d^2\log(|\Delta_K|) \right) + nd^3C \right).$$
\end{proposition}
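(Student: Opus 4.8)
The plan is to reuse, as closely as possible, the machinery of Lemma~\ref{prop:compl} and Proposition~\ref{prop:colpivot}. Steps~10--18 form a single ``divisibility sweep'' over the already-reduced part of the matrix, run once per iteration of the \textbf{while}-loop; they split into Step~10 (forming $\bg=\ag_i\bg_i$), the $O(n^2)$ containment tests of Steps~11--17, and the correction of Steps~13--15, carried out at most once. First I would record the sizes entering Step~10: exactly as in Proposition~\ref{prop:size}, the normalizations inside ColPivot and RowPivot keep $\bs(\ag_i),\bs(\bg_i^{-1})\le\Bid$ (the stored and normalized object being $\bg_i^{-1}$), and the reductions keep every entry $a_{k,l}$ with $k,l\le i$ of size $\OT(\Be)$. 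Hence Step~10 --- one inversion plus one ideal product --- costs $\OT(d\Bid+d^3\log(\lvert\Delta_K\rvert)+d^2C)$ by Propositions~\ref{cost:idealinv} and~\ref{cost:idealrest}, produces $\bs(\bg)\in\OT(\Bid)$, and is negligible against the target.

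For the main term I would observe that each test decides $a_{k,l}\,\ag_l\bg_k^{-1}\subseteq\bg$ and therefore reduces to a bounded number of ideal operations --- forming $\ag_l\bg_k^{-1}$, scaling by $a_{k,l}$, and one ideal sum to decide containment --- on ideals of size $\OT(\Bid)$ and a scalar of size $\OT(\Be)$; by Lemma~\ref{cost:idealsum} and Proposition~\ref{cost:idealrest}, and using $\Bid\le d\Be$, this is $\OT(d^2\bs(\dg)+d^2\Bid+d^3C)$ per pair. Multiplying by $O(n^2)$ and expanding $\Bid=d^4+d^2\log(\lvert\Delta_K\rvert)$ yields the dominant term $\OT(n^2d^2(\bs(\dg)+d^4+d^2\log(\lvert\Delta_K\rvert)))$ of the claimed bound; the additive-constant contribution is kept at $\OT(nd^3C)$ rather than $\OT(n^2d^3C)$ by precomputing, once per sweep, the $O(n)$ auxiliary ideals $\bg_i\bg_k$ and $\ag_l^{-1}$ ($k,l<i$) and then deciding each containment by a triangular-lattice membership test, which involves no field multiplications.

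For the correction step: since $\operatorname{StepOver}$ is cleared at the first failing pair, the \textbf{for}-loop exits immediately afterwards, so Steps~13--15 run at most once per sweep. Picking $g\in\g=\bg_i\bg_k^{-1}$ with $a_{k,l}g\notin\ag_i\ag_l^{-1}$ is a short search among small $\OK$-multiples of an HNF basis of $\g$ (equivalently, one application of Lemma~\ref{alg:crt}), costing $\OT(d\Bid+d^3C)$ and yielding $\bs(g)\in\OT(\Bid)$ because $\bs(\g)\in\OT(\Bid)$; the row operation $A_i\leftarrow A_i+gA_k$ is $n$ field multiplications and additions, hence $\OT(n(d^2\Bid+d^2\Be+d^2C))$ by Proposition~\ref{cost:elements}; and the subsequent reduction of the $\le n$ entries of column~$i$ modulo $\dg\ag_k^{-1}\bg_i$ is, exactly as observed in the proof of Proposition~\ref{prop:colpivot}, of the same order as the reductions charged there and in particular dominated by the $n^2$-term above. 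Adding the three contributions and absorbing the lower-order terms yields the stated complexity.

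The step I expect to be the genuine obstacle is not any single cost estimate but the self-consistency of the size invariants across the \textbf{while}-loop: one must verify that \emph{after} the update $A_i\leftarrow A_i+gA_k$ and the reduction of column~$i$, row~$i$ --- and hence, after the next pass through ColPivot and RowPivot, the whole matrix --- is again of size $\OT(\Be)$, so that the next execution of Steps~10--18 can be analysed with the same bounds. This rests on $\bs(g)$ being small, which follows from $\g=\bg_i\bg_k^{-1}$ having size $\OT(\Bid)$ --- itself a consequence of the normalizations being applied to $\bg_i^{-1}$ and $\ag_i$ rather than to the pivots, a deliberate feature of the algorithm --- together with Proposition~\ref{prop:reduction} applied with the modulus $\dg\ag_k^{-1}\bg_i$ of size $\OT(\bs(\dg)+\Bid)$. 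Lining these two facts up is the only delicate point, and it is precisely where the pseudo-SNF argument departs from the pseudo-HNF one.
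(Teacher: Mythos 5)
Your decomposition and cost accounting are essentially the paper's: Step~10 is a one-off inversion plus product, each of the $O(n^2)$ containment tests costs $\OT(d^3\Be)$ (the paper decides $a_{k,l}\ag_l\bg_k^{-1}\subseteq\bg$ by comparing $\bg$ with the HNF of the concatenated $\Z$-bases, i.e.\ exactly your ideal-sum test) and yields the dominant $\OT(n^2d^3\Be)$ term, and Steps~13--15 run at most once per sweep with $g$ found by testing the $\Z$-basis elements of $\g$ for membership in $\ag_i\ag_l^{-1}$. Two minor caveats: finding $g$ is such a membership search, not an application of Lemma~\ref{alg:crt} (which constructs idempotents for coprime ideals), and your claim that the $C$-contribution drops to $\OT(nd^3C)$ via a ``multiplication-free'' membership test is unsubstantiated --- each test still multiplies $a_{k,l}$ (of size $\OT(\Be)$, which contains $C$) into an ideal; the paper simply carries the $\OT(n^2d^3\Be)$ term and substitutes $\Be$ at the end.
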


\begin{proof}
  Computing $\bg$ costs $\tilde{O}(d^2\Bid + d^3\log(|\Delta_K|) + d^3C)$. It requires inverting 
$\bg_i^{-1}$ and multiplying it with $\ag_i$. This is done only once. 

Calculating $\ag_l\bg_k^{-1}$ does not involve inversion (we have $\bg_k^{-1}$) and 
thus costs $\tilde{O}(d^2\Bid + d^3C)$. 
Then, calculating $a_{k,l}\ag_l\bg_k^{-1}$ costs $\tilde{O}(d^3\Be)$. Checking 
whether $a_{k,l}\ag_l\bg_k^{-1}\subset\bg$ can be done by calculating the pseudo-HNF of 
$(H_1^t | H_2^t )^t$ where $H_1$ is the $\Z$-basis matrix of $\bg$ and 
$H_2$ is that of $a_{k,l}\ag_l\bg_k^{-1}$. If it is the same as $H_1$, it means that 
$a_{k,l}\ag_l\bg_k\subset\bg$. The entries of the matrix representing the $\Z$-basis 
of $\bg$ have their size in $\tilde{O}\left( {\Bid}/{d^2}\right)$, while the 
size of the entries of the matrix of the $\Z$-basis of $a_{k,l}\ag_l\bg_k^{-1}$ are 
in $\tilde{O}\left( {\Bid}/{d^2} + \Be + {C}/{d}\right)$. Therefore 
computing the HNF of their concatenation costs 
$\tilde{O}\left( d\Bid + d^3\Be + d^2C\right) \subseteq \tilde{O}(d^3\Be)$. So the 
search for $k,l$ such that $a_{k,l}\ag_l\bg_k^{-1}\not\subset\bg$ costs 
$\tilde{O}(n^2d^3\Be)$. 

Once we have $k,l$, we find $g$ by checking if $a_{k,l}\alpha_h\in\ag_i\ag_l^{-1}$ where the 
$(\alpha_h)_{h\leq d}$ are the elements of the $\Z$-basis of $\g$. Calculating $\g$ and $\ag_i\ag_l^{-1}$ 
has the same 
cost as calculating $\bg$, that is $\tilde{O}(d^2\Bid + d^3\log(|\Delta_K|) + d^3C)$. As 
$\bs(a_{k,l}\alpha_h)\in\tilde{O}\left( \Be + {\Bid}/{d} + C \right)$, the entries of the 
corresponding vector are in $\tilde{O}\left( {\Be}/{d} + {\Bid}/{d^2} + {C}/{d}\right)$. 
Likewise, the entries of the matrix of the $\Z$-basis of $\ag_i\ag_l^{-1}$ are in 
$\tilde{O}\left( {\Bid}/{d^2}\right)$. Therefore, solving the linear system to verify if 
$a_{k,l}\alpha_h\in\ag_i\ag_l^{-1}$ costs $\tilde{O}(d^2\Be)$, and this is repeated at 
most $d$ times, at a total cost of $\tilde{O}(d^3\Be)$. The resulting element $g\in\g$ satisfies 
$\bs(g)\in\tilde{O}\left( {\Bid}/{d}\right)$. 

The step $A_i\leftarrow A_i + gA_k$ costs $\tilde{O}\left( d(d+n){\Bid}/{d} + dn\Be + d(n+d)C\right)$, 
and the resulting entries of $a_{k,i}$ of $A_i$ satisfy 
$\bs(a_{k,i})\in\tilde{O}\left( {\Bid}/{d} + \Be + C\right) \subseteq \tilde{O}(\Be)$. 

Finally, as $\bs(\dg\ag_k^{-1}\bg_i)\in\tilde{O}\left( \bs(\dg) + \Bid\right)$, the cost of the 
$n$ reduction of $a_{k,i}$ modulo $\dg\ag_k^{-1}\bg_i$ is in 
$$\tilde{O}\left( n \left( d^3(\bs(\dg) + \Bid) + d^2\Be + d^3\log(|\Delta_K|) + d^3C\right) \right).$$
This and the term in $\tilde{O}(n^2d^3\Be)$ are the two dominant steps. The result follows by substituting 
the values of $\Be$ and $\Bid$ by their expression in terms of the invariants of the field and $\bs(\dg)$. 
\end{proof}

\begin{proposition}\label{prop:pseudosnf}
Let $B_A = \max_{i,j}\bs(a_{i,j})$ and $B_\ag = \max_{i,j}(\bs(\ag_i),\bs(\bg_j))$. 
The cost of Algorithm~\ref{alg:pseudosnf} is in 
$$\tilde{O}\left( nd((d+n)^2\bs(\dg) + nd^2)(d^4+d^2\log(|\Delta_K|)+\bs(\dg)) + n^2d^2(\bs(\dg)C + B_A + B_\ag) \right).$$
\end{proposition}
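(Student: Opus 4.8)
The plan is to follow the template of the complexity proof for the pseudo-HNF algorithm (Theorem~\ref{thm:pseudohnf}), now assembling the per-pass cost estimates supplied by Proposition~\ref{prop:colpivot} (for ColPivot and RowPivot) and Proposition~\ref{prop:verif_snf} (for Steps~10--18 of Algorithm~\ref{alg:pseudosnf}). The first thing to check is the size invariant underlying those two propositions — the analogue of Proposition~\ref{prop:size}: throughout the run, the coefficient ideals $\ag_i,\bg_i$ stay integral of size in $\tilde{O}(\Bid)$ and the entries $a_{i,j}$ have size in $\tilde{O}(\Be)$. As in the pseudo-HNF case this is re-established after every Normalize/Reduce call, since the moduli $\dg\ag_j^{-1}\bg_k$ occurring in the reductions have size in $\tilde{O}(\bs(\dg)+\Bid)$, so by Proposition~\ref{prop:reduction} (together with the bound on denominators noted thereafter) the reduced entries have size in $\tilde{O}(\bs(\dg)/d+\Bid/d+C)=\tilde{O}(\Be)$. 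Granting this invariant, one pass through the body of the \textbf{while} loop (one ColPivot, one RowPivot, and Steps~10--18) costs
\[
 \tilde{O}\!\left(d^2(d+n)^2\bigl(\bs(\dg)+d^4+d^2\log|\Delta_K|\bigr)+d^3(d^2+dn+n)C\right)
\]
by Propositions~\ref{prop:colpivot} and~\ref{prop:verif_snf}.

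The genuinely new ingredient is to bound the number of times the \textbf{while} loop is entered for a fixed $i$. For this I would reuse the termination argument from the correctness proof: as long as StepOver remains false, the integral ideal $\overline{a_{i,i}}\ag_i\bg_i^{-1}+\dg$ strictly grows. Since this ideal always contains the integral ideal $\dg$, its norm divides $\inorm{\dg}$, and a strictly increasing chain of integral divisors of $\inorm{\dg}$ has length at most $1+\log_2\inorm{\dg}$; because $\min(\dg)\OK\subseteq\dg$ implies $\inorm{\dg}\mid\min(\dg)^d$, this is at most $1+d\log_2\min(\dg)=1+\bs(\dg)/d$. Hence the \textbf{while} loop is entered $\tilde{O}(\bs(\dg)/d)$ times for each of the $n$ values of $i$, i.e.\ $\tilde{O}(n\bs(\dg)/d)$ passes in total. (If $\dg=\OK$ the quotient is trivial; otherwise $\bs(\dg)\geq d^2$, a bound that is used below to absorb lower-order terms.) Multiplying the per-pass cost above by $\tilde{O}(n\bs(\dg)/d)$ yields the first summand $\tilde{O}\bigl(nd((d+n)^2\bs(\dg)+nd^2)(d^4+d^2\log|\Delta_K|+\bs(\dg))+n^2d^2\bs(\dg)C\bigr)$ of the claim, after expanding $\Bid$ and simplifying with the relations among $\Bid$, $\Be$, $C$ and $\bs(\dg)\geq d^2$.

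Finally I would add in the initialization Steps~1--3, which is where the input sizes $B_A=\max_{i,j}\bs(a_{i,j})$ and $B_\ag=\max_{i,j}(\bs(\ag_i),\bs(\bg_j))$ enter. Step~1 consists of $n$ ideal inversions (Proposition~\ref{cost:idealinv}); Step~2 normalizes $2n$ length-$n$ rows (Proposition~\ref{prop:normalization}), after which the ideals have size $\tilde{O}(\Bid)$ and the entries size $\tilde{O}(B_A+B_\ag+d\log|\Delta_K|+dC)$; Step~3 needs $n$ more inversions, $n^2$ ideal products (Proposition~\ref{cost:idealrest}), and $n^2$ reductions (Proposition~\ref{prop:reduction}) modulo ideals of size $\tilde{O}(\Bid)$. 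All of these contributions are dominated by the main-loop estimate except for the $\tilde{O}(n^2d^2(B_A+B_\ag))$ coming from the $n^2$ reductions, which is the remaining summand of the claim. Summing the two parts gives the stated bound.

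The main obstacle is the iteration count for the \textbf{while} loop: one must combine the strictly-increasing-ideal argument of the correctness proof with the sharp estimate $\log_2\inorm{\dg}\leq\bs(\dg)/d$ (the cruder $\log_2\inorm{\dg}\leq\bs(\dg)$ would inflate the exponent of $d$ by one). The rest is the same — admittedly lengthy — bookkeeping of ideal and element sizes already carried out for the pseudo-HNF algorithm.
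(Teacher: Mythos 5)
Your proposal is correct and follows essentially the same route as the paper: bound the initialization (Steps 1--3, which is where $B_A$ and $B_\ag$ enter via the $n^2$ reductions), bound one pass of the \textbf{while} loop by Propositions~\ref{prop:colpivot} and~\ref{prop:verif_snf}, and bound the number of passes by $\tilde{O}(n\bs(\dg)/d)$ using the strictly increasing chain of integral ideals $\overline{a_{i,i}}\ag_i\bg_i^{-1}+\dg$ together with $\log\inorm{\dg}\leq\bs(\dg)/d$. Your write-up is in fact slightly more explicit than the paper's on the iteration count (the paper only states the number of iterations is in $\tilde O(\log(\inorm{\dg}))$ and then silently converts to $\bs(\dg)/d$), but the argument is the same.
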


\begin{proof}
First, let us estimate the cost of Steps~1 to~3. Inverting the $\bg_i$ costs 
$\tilde{O}\left( n \left( dB_\ag + d^3\log(|\Delta_K|) + d^2C\right) \right)$. As in the proof 
of the complexity of Algorithm~\ref{alg:pseudohnf}, the normalization costs 
$\tilde{O}\left( dn(d^2 + nB_\ag) + dn^2B_A + nd(d^2+n)(\log(|\Delta_K|) + C)\right)$. The new 
elements have size $\tilde{O}\left( B_\ag + B_A + d\log(|\Delta_K|) + dC\right)$. Then, as we 
already have the $\bg_i^{-1}$, calculating the $\dg\ag_j\bg_i^{-1}$ costs 
$\tilde{O}\left( n^2\left( d^2(\Bid + \bs(\dg)) + d^3 C\right) \right)$. Finally, the 
cost of the subsequent reduction is in 
$$\tilde{O}\left( n^2d^2\left( d(d^4 + d^2\log(|\Delta|) + \bs(\dg)) + B_A + B_\ag + dC\right) \right),$$
which is the dominant step of this precalculation.

Now, let us analyze the main loop of the algorithm. 
The condition $\operatorname{StepOver} = \operatorname{true}$ is attained in 
at most $\tilde{O}(\bs(a_{i,i}\ag_i\bg_i^{-1}))$ since $a_{i,i}\ag_i\bg_i^{-1}$ becomes 
strictly larger at each iteration. Therefore, the number of iterations is in 
$\tilde{O}(\log(\inorm{\dg}))$. So Algorithm~\ref{alg:rowpivot}, Algorithm~\ref{alg:colpivot} and 
the Steps~10 to~18 are 
executed $\tilde{O}(n\bs(\dg)/d)$ times. We obtain the cost of the main loop by adding 
the estimated costs found in 
Proposition~\ref{prop:colpivot} and Proposition~\ref{prop:verif_snf} and multiplying this by 
$n\bs(\dg)/d$.
\end{proof}

\bibliographystyle{elsarticle-harv}
\bibliography{paper}

\begin{thebibliography}{24}
\expandafter\ifx\csname natexlab\endcsname\relax\def\natexlab#1{#1}\fi
\expandafter\ifx\csname url\endcsname\relax
  \def\url#1{\texttt{#1}}\fi
\expandafter\ifx\csname urlprefix\endcsname\relax\def\urlprefix{URL }\fi

\bibitem[{Belabas(2004)}]{Belabas2004}
Belabas, K., 2004. Topics in computational algebraic number theory. J. Th\'eor.
  Nombres Bordeaux 16~(1), 19--63.
\newline\urlprefix\url{{http://jtnb.cedram.org/item?id=JTNB\_2004\_\_16\_1\_19\_0}}

\bibitem[{Bernstein(2008)}]{Bernstein2008}
Bernstein, D.~J., 2008. Fast multiplication and its applications. In:
  Algorithmic number theory: lattices, number fields, curves and cryptography.
  Vol.~44 of Math. Sci. Res. Inst. Publ. Cambridge Univ. Press, Cambridge, pp.
  325--384.

\bibitem[{Biasse and Fieker(2012)}]{issac12}
Biasse, J.-F., Fieker, C., 2012. A polynomial time algorithm for computing the
  hnf of a module over the integers of a number field. In: van~der Hoeven, J.,
  van Hoeij, M. (Eds.), ISSAC. ACM, pp. 75--82.

\bibitem[{Bosma and Pohst(1991)}]{Bosma1991}
Bosma, W., Pohst, M., 1991. Computations with finitely generated modules over
  {D}edekind rings. In: Watt, S.~M. (Ed.), Proceedings of the 1991
  {I}nternational {S}ymposium on Symbolic and {A}lgebraic {C}omputation.
  ISSAC'91. ACM, New York, NY, USA, pp. 151--156.

\bibitem[{Chang et~al.(2012)Chang, Stehl{\'e}, and Villard}]{Chang2012}
Chang, X.-W., Stehl{\'e}, D., Villard, G., 2012. Perturbation analysis of the
  {QR} factor {R} in the context of {LLL} lattice basis reduction. Math. Comp.
  81~(279), 1487--1511.
\newline\urlprefix\url{http://dx.doi.org/10.1090/S0025-5718-2012-02545-2}

\bibitem[{Cohen(1993)}]{Cohen1993}
Cohen, H., 1993. A course in computational algebraic number theory. Vol. 138 of
  Graduate Texts in Mathematics. Springer-Verlag, Berlin.

\bibitem[{Cohen(1996)}]{Cohen1996}
Cohen, H., 1996. Hermite and {S}mith normal form algorithms over {D}edekind
  domains. Math. Comp. 65~(216), 1681--1699.
\newline\urlprefix\url{http://dx.doi.org/10.1090/S0025-5718-96-00766-1}

\bibitem[{Cohen(2000)}]{Cohen2000}
Cohen, H., 2000. Advanced topics in computational number theory. Vol. 193 of
  Graduate Texts in Mathematics. Springer-Verlag, New York.
\newline\urlprefix\url{http://dx.doi.org/10.1007/978-1-4419-8489-0}

\bibitem[{Dixon(1982)}]{Dixon1982}
Dixon, J.~D., 1982. Exact solution of linear equations using {$p$}-adic
  expansions. Numer. Math. 40~(1), 137--141.
\newline\urlprefix\url{http://dx.doi.org/10.1007/BF01459082}

\bibitem[{Fieker and Stehl{\'e}(2010)}]{Fieker2010}
Fieker, C., Stehl{\'e}, D., 2010. Short bases of lattices over number fields.
  In: Hanrot, G., Morain, F., Thom{\'e}, E. (Eds.), Algorithmic number theory.
  Vol. 6197 of Lecture Notes in Comput. Sci. Springer, Berlin, pp. 157--173.
\newline\urlprefix\url{http://dx.doi.org/10.1007/978-3-642-14518-6\_15}

\bibitem[{Hafner and McCurley(1991)}]{Hafner1991}
Hafner, J.~L., McCurley, K.~S., 1991. Asymptotically fast triangularization of
  matrices over rings. SIAM J. Comput. 20~(6), 1068--1083.
\newline\urlprefix\url{http://dx.doi.org/10.1137/0220067}

\bibitem[{Hoppe(1998)}]{Hoppe1998}
Hoppe, A., 1998. Normal forms over {D}edekind domains, efficient
  implementations in the computer algebra system {K}{A}{N}{T}. Ph.D. thesis,
  Technische {U}niversit\"{a}t {B}erlin.

\bibitem[{Howell(1986)}]{Howell1986}
Howell, J.~A., 1986. Spans in the module {$(Z\sb m)\sp s$}. Linear and
  Multilinear Algebra 19~(1), 67--77.
\newline\urlprefix\url{http://dx.doi.org/10.1080/03081088608817705}

\bibitem[{Kaltofen and Villard(2004)}]{Kaltofen2004}
Kaltofen, E., Villard, G., 2004. Computing the sign or the value of the
  determinant of an integer matrix, a complexity survey. In: Proceedings of the
  {I}nternational {C}onference on {L}inear {A}lgebra and {A}rithmetic ({R}abat,
  2001). Vol. 162. pp. 133--146.
\newline\urlprefix\url{http://dx.doi.org/10.1016/j.cam.2003.08.019}

\bibitem[{McQuillan(1976)}]{McQuillian}
McQuillan, D.~L., 1976. On the {G}alois cohomology of {D}edekind rings. J.
  Number Theory 8~(4), 438--445.

\bibitem[{Nguyen and Stehl{\'e}(2009)}]{Nguyen2009}
Nguyen, P.~Q., Stehl{\'e}, D., 2009. An {LLL} algorithm with quadratic
  complexity. SIAM J. Comput. 39~(3), 874--903.
\newline\urlprefix\url{http://dx.doi.org/10.1137/070705702}

\bibitem[{Novocin et~al.(2011)Novocin, Stehl{\'e}, and Villard}]{Novocin2011}
Novocin, A., Stehl{\'e}, D., Villard, G., 2011. An {LLL}-reduction algorithm
  with quasi-linear time complexity. In: S{TOC}'11---{P}roceedings of the 43rd
  {ACM} {S}ymposium on {T}heory of {C}omputing. ACM, New York, pp. 403--412.
\newline\urlprefix\url{http://dx.doi.org/10.1145/1993636.1993691}

\bibitem[{Rosser and Schoenfeld(1962)}]{Rosser1962}
Rosser, J.~B., Schoenfeld, L., 1962. Approximate formulas for some functions of
  prime numbers. Illinois J. Math. 6, 64--94.

\bibitem[{Sch{\"o}nhage and Strassen(1971)}]{Schoenhage1971}
Sch{\"o}nhage, A., Strassen, V., 1971. Schnelle {M}ultiplikation grosser
  {Z}ahlen. Computing 7, 281--292.

\bibitem[{Shoup(1990)}]{Shoup1990}
Shoup, V., 1990. On the deterministic complexity of factoring polynomials over
  finite fields. Inform. Process. Lett. 33~(5), 261--267.
\newline\urlprefix\url{http://dx.doi.org/10.1016/0020-0190(90)90195-4}

\bibitem[{Sonn and Zassenhaus(1967)}]{Sonn1967}
Sonn, J., Zassenhaus, H., 1967. On the theorem on the primitive element. Amer.
  Math. Monthly 74, 407--410.

\bibitem[{Storjohann and Labahn(1996)}]{Storjohann1996}
Storjohann, A., Labahn, G., 1996. Asymptotically fast computation of {H}ermite
  normal forms of integer matrices. In: Lakshman, Y.~N. (Ed.), Proceedings of
  the 1996 international symposium on symbolic and algebraic computation, ISSAC
  '96, Z\"urich, Switzerland.

\bibitem[{Storjohann and Mulders(1998)}]{Storjohann1998}
Storjohann, A., Mulders, T., 1998. Fast algorithms for linear algebra modulo
  {$N$}. In: Algorithms---{ESA} '98 ({V}enice). Vol. 1461 of Lecture Notes in
  Comput. Sci. Springer, Berlin, pp. 139--150.
\newline\urlprefix\url{http://dx.doi.org/10.1007/3-540-68530-8\_12}

\bibitem[{von~zur Gathen and Gerhard(2003)}]{Gathen2003}
von~zur Gathen, J., Gerhard, J., 2003. Modern computer algebra, 2nd Edition.
  Cambridge University Press, Cambridge.

\end{thebibliography}

\end{document}